\documentclass[11pt,a4paper]{article}

\usepackage{amssymb,amsthm,amsmath,amsfonts}
\usepackage[pdftex]{graphicx}
\usepackage{graphicx}
\usepackage{bm}
\usepackage[margin=1in]{geometry}
\usepackage{color}
\usepackage[toc,page]{appendix}
\usepackage{hyperref}
\usepackage[export]{adjustbox}
\usepackage{caption}
\usepackage{array}
\usepackage[skip=1ex, belowskip=2ex]{subcaption}

\usepackage{tikz,amsmath}
\usetikzlibrary{shapes.geometric, arrows.meta,shadows,positioning}
\tikzstyle{process} = [rectangle,copy shadow={fill=black,shadow xshift=-0.5ex,shadow yshift=-0.5ex}, minimum width=3em, minimum height=2em, text centered, draw=black, fill=gray!10,]

\tikzstyle{arrow} = [thick,->,>=stealth]
\numberwithin{equation}{section}
\newtheorem{theorem}{Theorem}[section]
\newtheorem{lemma}[theorem]{Lemma}
\newtheorem{proposition}[theorem]{Proposition}

\theoremstyle{definition}

\theoremstyle{remark}

\newtheorem{assume}{Assumption}

\usepackage{hyperref}
\usepackage{comment}

\title{Extension on the convergence rates of moment-SOS hierarchies via approximation of truncated moment sequences}
\author{Hoang Anh Tran\thanks{Department of Mathematics, National University of Singapore, Singapore 119076. (tranhoanganh@u.nus.edu).},~ 
Kim-Chuan Toh\thanks{Department of Mathematics, and Institute of Operations Research and Analytics, National University of Singapore, Singapore 119076. (mattohkc@nus.edu.sg).}}

\newcommand{\bx}{\mathbf{x}}
\newcommand{\bu}{\mathbf{u}}
\newcommand{\bt}{\mathbf{t}}
\newcommand{\biw}{\bm{\mathit{w}}}
\newcommand{\cD}{\mathcal{D}}
\newcommand{\biy}{\bm{\mathit{y}}}
\newcommand{\biz}{\bm{\mathit{z}}}
\newcommand{\by}{\mathbf{y}}
\newcommand{\bz}{\mathbf{z}}
\newcommand{\RR}{\mathbb{R}}
\newcommand{\NN}{\mathbb{N}}
\newcommand{\bv}{\mathbf{v}}
\newcommand{\CP}{\mathcal{P}}
\newcommand{\CQ}{\mathcal{Q}}
\newcommand{\CX}{\mathcal{X}}
\newcommand{\cX}{\mathcal{X}}
\newcommand{\CT}{\mathcal{T}}
\newcommand{\bM}{\mathbf{M}}

\newcommand{\fmin}{f_{\min}}

\newcommand{\mlb}{\mathrm{mlb}}
\newcommand{\lb}{\mathrm{lb}}

\newcommand{\CM}{\mathcal{M}}

\newcommand{\bd}{\mathbf{d}}

\newcommand{\CZ}{\mathcal{Z}}
\newcommand{\CY}{\mathcal{Y}}

\newcommand{\cS}{\mathcal{S}}
\newcommand{\CR}{\mathcal{R}}
\newcommand{\bS}{\mathbb{S}}
\newcommand{\cQ}{\mathcal{Q}}

\newcommand{\bR}{\mathbb{R}}
\newcommand{\bg}{\mathbf{g}}
\newcommand{\Lo}{\text{\L{}}}
\newcommand{\iy}{\mathit{y}}
\newcommand{\vc}{\mathbf{vec}}
\newcommand{\bp}{\mathbf{p}}

\newcommand{\Lip}{\textrm{Lip}}
\newcommand{\cK}{\mathcal{K}}
\newcommand{\bh}{\mathbf{h}}
\newcommand{\tr}{\operatorname{tr}}

\def\degg#1{\lceil#1\rceil}

\def\pMTX2r{\mathcal{P\!M}(\CT(\CX)_{2r})}
\def\pMQX2r{\mathcal{P\!M}(\CQ(\CX)_{2r})}
\def\pMRX2r{\mathcal{P\!M}(\CR(\CX)_{r})}
\def\pMkTX2r{\mathcal{P\!M}_k(\CT(\CX)_{2r})}
\def\pMkQX2r{\mathcal{P\!M}_k(\CQ(\CX)_{2r})}
\def\pMkRX2r{\mathcal{P\!M}_k(\CR(\CX)_{r})}

\newcommand{\interior}[1]{%
  {\kern0pt#1}^{\mathrm{o}}%
}
\allowdisplaybreaks
\hypersetup{hypertexnames=false}
    
\def\cheb#1{\|#1\|_{1,\mathrm{Cheb}}}
\def\mon#1{\|#1\|_{1,\mathrm{mon}}}
\def\monc#1{\|#1\|_{2,\mathrm{mon}}}

\newcommand{\supnorm}[2]{\|#1\|_{\infty,#2}}

\newcommand{\op}{\mathrm{op}}

\begin{document}
\maketitle

\begin{abstract}
This paper continues our work on the convergence rates of moment-SOS hierarchies via approximation of truncated moment sequences. We extend the method !\cite{tran2025convergence} developed for the Schm\"udgen-type hierarchy to the Putinar-type, Krivine--Stengle-type, extended-Handelman-type, and Hol-Scherer-type hierarchies. The main idea is to lift a pseudo-moment sequence to a simple set, approximate it by a moment sequence, and project the atoms of a representing measure onto the original feasible set. The {\L}ojasiewicz inequality then converts the error in the defining constraints into an error in the moments. With the {\L}ojasiewicz exponent $0<\Lo\leq1$, we obtain the convergence rates $\mathrm{O}((\log_2 r)^{3\Lo/2}/r^{\Lo})$ for the Putinar-type hierarchy and $\mathrm{O}(1/r^{\Lo/2})$ for the normalized Krivine--Stengle-type and Extended-Handelman-type hierarchies. In the matrix setting, we utilize Chebyshev-type kernel on $[-1,1]^n$ to derive the rate $\mathrm{O}((\log_2 r)^{3\Lo/2}/r^{\Lo})$ for the Hol-Scherer-type hierarchy. Together with our preceding work, the results provide a universal method for studying convergence rates of different types of moment-SOS hierarchies.
\end{abstract}
\section{Introduction}
The paper considers the problem of minimizing a polynomial $f \in \RR[\bx]$ over a nonempty compact basic semialgebraic set $\CX \subset \RR^n$:
\begin{equation}\tag{POP}\label{POP}
\fmin = \underset{\bx \in \CX}{\min}\ f(\bx).
\end{equation}
The semialgebraic set $\CX$ is defined either by polynomial inequalities and equalities as follows:
\begin{equation}\label{def: poly_set}
\CX := \big\{ \bx \in \RR^n:\
g_j(\bx) \geq 0 \ \forall j \in [m],\;
h_i(\bx) = 0 \ \forall i \in [p] \big\},
\end{equation}
where $g_j,\,h_i \in \RR[\bx]$ for all $j \in [m]$ and $i \in [p]$, or by a polynomial matrix inequality of the form
\begin{equation}\label{def: matrix_set}
\CX := \big\{ \bx \in \RR^n:\ G(\bx)\succeq 0 \big\},
\end{equation}
where $G(\bx)=(g_{ij}(\bx))_{m \times m}$ is an $m \times m$ symmetric polynomial matrix with $g_{ij} \in \RR[\bx]$. Polynomial optimization problems (POPs) of these two forms are very general and have wide applications in various fields. POPs in the scalar setting, where $\cX$ is defined as in~\eqref{def: poly_set}, have applications in operations research, signal processing, and computational geometry. We refer to~\cite{lasserre2009moments,henrion2020moment,nie2023moment,magron2023sparse,tran2025momentsumofsquareshierarchygromov} for overviews of existing techniques and applications. POPs in the matrix setting have applications in various fields, including control theory~\cite{chesi2010lmi,henrion2006convergent,ichihara2009optimal,scherer2006lmi}, quantum information theory~\cite{fang2021sum}, and statistics~\cite{henrion2020moment}.

The problem~\eqref{POP} is NP-hard, as it includes hard combinatorial optimization problems such as MAX-CUT (see, e.g.,~\cite{michael1979johnson}) as special cases. To address this computational difficulty, the moment-SOS hierarchy, introduced in the works of Lasserre~\cite{lasserre2001global} and Parrilo~\cite{parrilo2000structured}, has become a powerful method for solving POPs. The basic idea underlying this method is that~\eqref{POP} admits the reformulation
\begin{displaymath}
    \fmin =\; \underset{\tau \in \RR}{\sup} 
    \big\{\tau :\; f-\tau \in \CP_+(\CX)\big\},
\end{displaymath}
where $\CP_+(\CX)$ denotes the set of nonnegative polynomials over $\CX$. This establishes a connection between POPs and the problem of verifying the non-negativity (or positivity) of a polynomial over a semialgebraic set $\CX$. The connection shows that we can solve POPs by providing inner approximations of $\CP_+(\CX)$, whose membership can be verified efficiently. Therefore, different {\em certificates of positivity} induce different hierarchies of lower bounds for $\fmin$, which can be computed using semidefinite programming (SDP) or linear programming (LP). We summarize the types of hierarchies whose convergence rates we study in the following section.

\subsection{Positivity certificates and hierarchy of lower bounds}
Throughout the paper, the relaxation level is denoted by $r$. The SOS hierarchies use certificates of degree at most $2r$, whereas the Krivine--Stengle-type and Extended-Handelman-type hierarchies use certificates of degree at most $r$. The subscripts on truncated cones always indicate the total certificate degree.
For $\bx \in \RR^n$, we define the vector of monomials in $\bx$ by 
\begin{displaymath}
    \bv(\bx) = (1\, \, x_1 \ldots x_n \ldots \bx^\alpha \ldots)^\top,
\end{displaymath}
where $\alpha \in \NN^n$ is a multi-index used to define monomial $\bx^\alpha = \prod_{i=1}^n x_i^{\alpha_i}$. For any fixed positive integer $r$, we denote the vector of monomials of degree at most $r$ by $\bv_r(\bx)$. One of the most natural positivity certificates of a polynomial in $\bx$ is checking {\em sum-of-squares (sos)}. We say a polynomial $p(\bx)$ is sum-of-squares if there exist polynomials $p_1,\ldots,p_k$ such that $\sum_{i=1}^kp_i^2 = p$. We denote the set of such polynomials by $\Sigma[\bx]$ and let $\Sigma_r[\bx]$ be the subset consisting of all sos--polynomials of degree at most $r$. Obviously, a sos--polynomial is non-negative. Moreover, for any sos--polynomial $p(\bx)$ of degree at most $2r$, there exists a positive semi-definite matrix $M$ such that 
\begin{displaymath}
    p(\bx) \, = \, \langle M, \bv_r(\bx)\bv_r(\bx)^\top \rangle,
\end{displaymath}
so membership of $\Sigma_{2r}[\bx]$ can be checked via SDP. 
However, $\Sigma[\bx]$ is not sufficient to cover all non-negative polynomials.
A classical example of a nonnegative polynomial that is not a sum of squares is the Motzkin polynomial~\cite{motzkin1967arithmetic}, defined by
\begin{displaymath}
    M(x,y):=x^4y^2+x^2y^4+1-3x^2y^2.
\end{displaymath}
Indeed, by the arithmetic--geometric mean inequality,
\begin{displaymath}
    x^4y^2+x^2y^4+1 \geq 3\sqrt[3]{x^4y^2\cdot x^2y^4} =3x^2y^2.
\end{displaymath}
Therefore, $M(x,y)\geq 0$ for all $(x,y)\in\RR^2$. Nevertheless, $M$ cannot be represented as a sum of squares of polynomials.
Fortunately, for checking the non-negativity of a polynomial over a compact semialgebraic set $\cX$ of either form~\eqref{def: poly_set} or~\eqref{def: matrix_set}, the SOS method can be strengthened by using a {\em preordering} or a {\em quadratic module} generated by the constraints defining $\cX$.

In the scalar setting, where $\cX$ is defined by polynomial inequalities and equalities as in~\eqref{def: poly_set}, the truncated preordering and quadratic module of $\CX$ are defined, respectively, by 
\begin{eqnarray} 
    \CT(\CX)_{2r} &=& \Biggl\{q(\bx) = \sum_{j=1}^N\sigma_{J_j}(\bx)g_{J_j}(\bx) + \sum_{i=1}^p\tau_i(\bx)h_i(\bx) :\ 
     \label{eq-TX} \\ 
    &&\quad \exists N \in \NN,\; J_j\subset [m], \;\sigma_{J_j} \in \Sigma[\bx]_{2(r-\lceil g_{J_j} \rceil) } \ \forall j \in [N],\ \tau_i \in \RR[\bx]_{2r-\deg h_i}\ \forall i \in [p] \ \Biggr\},
\nonumber\\
        \CQ(\CX)_{2r}&=&\Biggl\{q(\bx) = \sigma_0(\bx) +\sum_{j=1}^m\sigma_j(\bx)g_j(\bx) 
        +\sum_{i=1}^p\tau_i(\bx)h_i(\bx):
        \label{eq-QX} \\ 
    &&\quad \sigma_0 \in \Sigma[\bx]_{2r},\;
     \; \sigma_j \in \Sigma[\bx]_{2(r-\lceil g_j \rceil)}\ \forall\; j\in[m],\;
     \tau_i \in \RR[\bx]_{2r-\deg h_i}\ \forall i \in [p]\Biggr\}.
    \nonumber
\end{eqnarray}
Here, for an index set $ J \subset [m]$, we define $g_J = \prod_{j \in J}g_j$,
 $g_{\emptyset}=1$, and $\degg{p} := \lceil \deg(p)/2 \rceil$. We also assume that $\degg{g_{J_j}} \leq r$ for all $j\in[N]$, $\degg{g_j}\leq r$ for all $j\in [m]$, 
and $\degg{h_i}\leq r$ for all $i\in[p]$ in~\eqref{eq-QX} and~\eqref{eq-TX}. The preordering $\CT(\CX)$ and quadratic module $\CQ(\CX)$ are defined by removing the degree constraints on component polynomials of~\eqref{eq-TX} and~\eqref{eq-QX}, respectively. One can observe that we slightly abuse the notations, where $\CT(\CX)$ and $\CQ(\CX)$ are defined by the polynomials that define $\cX$ rather than the set $\cX$ itself. In this paper, we will explicitly state the polynomials used in definition of preordering and quadratic module.

It is clear that $\CT(\CX)_{2r}$ and $\CQ(\CX)_{2r}$ provide inner approximations of $\CP_+(\cX)$, and membership of these truncated preordering and truncated quadratic module is verifiable via SDPs. This results in the following hierarchies of lower bounds on the global optimal value $\fmin$ of~\eqref{POP}:
\begin{eqnarray}
    \lb(f,\CT(\CX))_r = \sup\{\tau \in \RR:\ f(\bx) - \tau \in \CT(\CX)_{2r}\} \label{hierarchy: poly-preordering},\\
     \lb(f,\CQ(\CX))_r = \sup\{\tau \in \RR:\ f(\bx) - \tau \in \CQ(\CX)_{2r}\} \label{hierarchy: poly-quadratic}.\,
\end{eqnarray}
The convergence of these hierarchies is the consequence of the Schmüdgen Positivstellensatz for a compact semi-algebraic set~\cite[pp. 283--313]{schmudgen2017moment}
and the Putinar Positivstellensatz for an Archimedean semi-algebraic set~\cite{putinar1993positive}, respectively. 

\begin{theorem}[Schmüdgen Positivstellensatz]\label{thm: Schmüdgen}
    Let $\CX$ be the semi-algebraic set in~\eqref{def: poly_set}.
    We assume that $\CX$ is compact. If $f$ is a positive polynomial on $\CX$, then $f \in \CT(\CX)$.
\end{theorem}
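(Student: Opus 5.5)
We prove Schmüdgen's theorem along the classical route of Schmüdgen and Wörmann: first show that the preordering $\CT(\CX)$ is Archimedean, then apply the Kadison--Dubois representation theorem (the abstract Positivstellensatz for Archimedean preorderings). The equality constraints $h_i=0$ are absorbed by treating them as the pair of inequalities $\pm h_i\ge0$. Since the ideal generated by the $h_i$ already lies in $\CT(\CX)$, nothing is lost, so we may assume $\CX=\{g_1\ge0,\dots,g_m\ge0\}$.

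\textbf{Archimedean property.} Recall that a preordering $T$ is Archimedean if for every $q\in\RR[\bx]$ there is $N$ with $N\pm q\in T$. It suffices to find $N$ with $N-\sum_i x_i^2\in T$, because the set of bounded elements $\{q: N\pm q\in T \text{ for some } N\}$ is a subring containing $\RR$. Since $\CX$ is compact, choose $N$ with $N-\|\bx\|^2>0$ on $\CX$. We then apply the Krivine--Stengle Positivstellensatz, which follows from the Artin--Lang / Tarski transfer principle via the real spectrum. It yields $p,q\in T$ with
\begin{equation*}
p\,(N-\|\bx\|^2)=1+q .
\end{equation*}
This is the main obstacle: converting this identity into a genuine membership $N'-\|\bx\|^2\in T$. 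Following Wörmann, we show that the ring of $T$-bounded elements contains $N-\|\bx\|^2$ in the appropriate sense. Using $1+q\in T$ and $p\in T$, one gets that $(N-\|\bx\|^2)$ times an element bounded below by $1$ lies in $T$, and a standard algebraic manipulation then gives $N'-\|\bx\|^2\in T$ for some larger $N'$. Concretely, one proves $p$ is $T$-bounded, $k-p\in T$, and combines $k(N-\|\bx\|^2)=(k-p)(N-\|\bx\|^2)+1+q$, iterating with squares so that every term is in $T$. An alternative route to this step is Schmüdgen's original operator-theoretic argument: a $T$-positive linear functional has moments bounded by the support of $\CX$.

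\textbf{Representation step.} With $T=\CT(\CX)$ Archimedean, suppose $f>0$ on $\CX$ but $f\notin T$. By the Eidelheit--Kakutani separation theorem (Archimedean means $1$ is an algebraic interior point of $T$), there is a linear functional $L\neq0$ with $L(T)\ge0$, $L(1)=1$ and $L(f)\le0$. Archimedeanity plus the GNS construction make the multiplication operators $X_i$ bounded and commuting, since $\langle x_i^2 p,p\rangle_L\le N\langle p,p\rangle_L$. The spectral theorem then produces a representing measure $\mu$ for $L$, and $L(g_jp^2)\ge0$ forces $\mathrm{supp}\,\mu\subset\CX$. Hence $L(f)=\int f\,d\mu>0$, a contradiction. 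Equivalently, one can cite Kadison--Dubois: in an Archimedean preordering, $f>0$ on the set of ring homomorphisms $\RR[\bx]\to\RR$ that are nonnegative on $T$ implies $f\in T$; those homomorphisms are exactly the point evaluations at $\bx\in\CX$. Either way, $f\in\CT(\CX)$.
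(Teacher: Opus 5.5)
The paper does not prove this theorem. It only cites it from Schm\"udgen's monograph, so your outline has to stand on its own. Your reduction of $h_i=0$ to $\pm h_i\geq0$ is fine. So is your representation step: separation with $1$ an algebraic interior point, GNS with bounded commuting multiplication operators, a spectral measure supported in $\CX$ (or Kadison--Dubois). The gap is the Archimedean step. This is exactly what Schm\"udgen's theorem adds beyond the Jacobi--Putinar representation argument, and you flag it as the main obstacle without resolving it.

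Your concrete mechanism fails. In $k(N-\|\bx\|^2)=(k-p)(N-\|\bx\|^2)+1+q$, the term $(k-p)(N-\|\bx\|^2)$ is an element of $T$ multiplied by the very polynomial whose membership in $T$ is in question. No iteration with squares removes that factor. You also give no reason why $p$ should be $T$-bounded, and an upper bound on $p$ would not help anyway, since $p$ is not bounded below by a positive constant. Likewise, for $u:=1+q$, knowing only $u-1\in T$ and $u(N-\|\bx\|^2)=p(N-\|\bx\|^2)^2\in T$ does not give the conclusion by a ``standard manipulation''. What is missing is an upper bound $c-u\in T$. The pointer to Schm\"udgen's operator-theoretic argument is only a pointer, not a proof.

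W\"ormann's device supplies that bound. The preordering $T':=T+(N-\|\bx\|^2)T$ contains $N-\|\bx\|^2$, so every $x_i$ is $T'$-bounded and $T'$ is Archimedean. Hence $\ell-q=t_1+(N-\|\bx\|^2)t_2$ for some $\ell\in\NN$ and $t_1,t_2\in T$. Multiplying by $u\in T$ pulls this back into $T$:
\[
 \frac{(\ell+1)^2}{4}-\Big(u-\frac{\ell+1}{2}\Big)^2=u(\ell-q)=ut_1+u(N-\|\bx\|^2)\,t_2\in T .
\]
Set $A=(\ell+1)^2/4$ and $v=u-(\ell+1)/2$. The identity $\tfrac{A+1}{2}\pm v=\tfrac12(A-v^2)+\tfrac12(v\pm1)^2$ then yields $c-u\in T$ for some $c>0$. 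Finally,
\[
 cN-\|\bx\|^2=N(c-u)+u(N-\|\bx\|^2)+q\|\bx\|^2\in T,
\]
which is the Archimedean property. With this step inserted, your argument becomes the standard W\"ormann proof and is complete.
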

\begin{theorem}[Putinar Positivstellensatz]\label{thm: Putinar}
    Let $\CX$ be the semi-algebraic set in~\eqref{def: poly_set}. We assume that the Archimedean condition holds, i.e., there exists a positive number $R$ such that $R -\|\bx\|^2 \in \CQ(\CX)$. If $f$ is a positive polynomial on $\CX$, then $f \in \CQ(\CX).$
\end{theorem}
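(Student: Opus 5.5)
We will prove Theorem~\ref{thm: Putinar} by the classical functional-analytic route: a separation argument against the convex cone $\CQ(\CX)$, followed by a representing-measure argument for the separating functional. Suppose $f>0$ on $\CX$ but $f\notin\CQ(\CX)$. The first step is to show that the Archimedean condition makes $1$ an algebraic interior point (order unit) of $\CQ(\CX)$. We need that for every $p\in\RR[\bx]$ there is $N$ with $N\pm p\in\CQ(\CX)$. The set $H=\{p:\ \exists N,\ N\pm p\in\CQ(\CX)\}$ is a subring, by the identity $N_1N_2 - pq = \tfrac12\big((N_1+p)(N_2-q)+(N_1-p)(N_2+q)\big)$ together with the fact that $\CQ(\CX)$ is closed under multiplication by squares. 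It therefore suffices to show $x_i\in H$. This follows from $R-\|\bx\|^2\in\CQ(\CX)$ and the identity
\begin{displaymath}
\tfrac{R+1}{2}\pm x_i=\tfrac12\big((x_i\pm1)^2+\textstyle\sum_{k\neq i}x_k^2\big)+\tfrac12\big(R-\|\bx\|^2\big).
\end{displaymath}

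In the second step, Eidelheit--Kakutani separation (valid because $1$ is an algebraic interior point) gives a nonzero linear functional $L$ on $\RR[\bx]$ with $L\ge0$ on $\CQ(\CX)$ and $L(f)\le0$. Since $1$ is an order unit, $L(1)>0$, so we may normalize $L(1)=1$.

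The third step, which we expect to be the main obstacle, is to show that $L$ is integration against a probability measure supported on $\CX$. On the quadratic module, $L(p^2)\ge0$ yields a GNS construction: a pre-Hilbert space $\RR[\bx]/\{p: L(p^2)=0\}$ on which multiplication by $x_i$ acts by commuting symmetric operators. The order-unit property gives $L(p^2(N-x_i^2))\ge 0$ for suitable $N$ (note $N-x_i^2\in H$-positive via $\CQ(\CX)$), so these operators are bounded with norm at most $\sqrt N$. They extend to commuting bounded self-adjoint operators on the completion. The joint spectral theorem then yields a probability measure $\mu$ with $L(p)=\int p\,d\mu$. To localize the support, we use $L(p^2g_j)\ge0$ for all $p$, which forces $g_j\ge0$ on $\operatorname{supp}\mu$; likewise $L(p\,h_i)=0$ for all $p$ (since $\pm p h_i$ lies in $\CQ(\CX)$) forces $h_i=0$ $\mu$-a.e. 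Hence $\operatorname{supp}\mu\subset\CX$. As an alternative, one could invoke Haviland's theorem together with the Kadison--Dubois representation theorem on the archimedean module.

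Finally, $L(f)=\int_\CX f\,d\mu\ge\min_\CX f>0$, because $\CX$ is compact, nonempty (if $\mu$ exists it is nonempty), and $f>0$ there. This contradicts $L(f)\le0$, so $f\in\CQ(\CX)$.
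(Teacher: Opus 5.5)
The paper gives no proof of this statement; it quotes it from Putinar's 1993 paper. Your outline is the standard functional-analytic proof: separation against $\CQ(\CX)$, then GNS and the joint spectral theorem for the separating functional. Apart from one step it is sound.

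\textbf{The gap is the claim that $H$ is closed under multiplication.} Your identity
\[
 N_1N_2-pq=\tfrac12\big((N_1+p)(N_2-q)+(N_1-p)(N_2+q)\big)
\]
writes $N_1N_2-pq$ through products of \emph{two} elements of $\CQ(\CX)$. A quadratic module is closed only under multiplication by squares, not under products of its own elements. That closure is exactly what the preordering $\CT(\CX)$ has and $\CQ(\CX)$ lacks. So your argument proves that $H$ is a ring for a preordering, not for $\CQ(\CX)$. Without the ring property you do not get $H=\RR[\bx]$. Then $1$ is not known to be an algebraic interior point, and the Eidelheit--Kakutani step has no footing.

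\textbf{The standard repair works with squares.} First show that $p\in H$ if and only if $N-p^2\in\CQ(\CX)$ for some $N>0$, using
\[
 \tfrac{N+1}{2}\pm p=\tfrac12\big((p\pm1)^2+(N-p^2)\big),\qquad
 N^2-p^2=\tfrac1{2N}\big((N+p)^2(N-p)+(N-p)^2(N+p)\big).
\]
Both identities use only sums and multiplication by squares. Then $N_1-p^2,\,N_2-q^2\in\CQ(\CX)$ give
\[
 N_1N_2-(pq)^2=N_2(N_1-p^2)+p^2(N_2-q^2)\in\CQ(\CX),
\]
so $pq\in H$. With this lemma the rest of your proof goes through.

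For the boundedness in your third step you do not need $H$ at all. The identity $R-x_i^2=(R-\|\bx\|^2)+\sum_{k\neq i}x_k^2\in\CQ(\CX)$ gives $\|X_i\|\leq\sqrt R$ directly.

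Two points should be stated explicitly. First, $\{p:L(p^2)=0\}$ is an ideal by the Cauchy--Schwarz inequality, so multiplication by $x_i$ is well defined on the quotient. Second, $g_j\geq0$ on $\operatorname{supp}\mu$ follows from $\int p^2g_j\,d\mu\geq0$ for all polynomials $p$. To see this, approximate continuous bump functions uniformly by polynomials on the compact support.
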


Whereas the Positivstellensatze of Schm\"udgen and Putinar induce hierarchies of semidefinite programs, the Krivine--Stengle-type and Extended-Handelman-type hierarchies are formulated as linear programs. Two prominent examples are the Krivine--Stengle and extended Handelman Positivstellensatze. These results provide inner approximations of $\CP_+(\cX)$ through sequences of truncated preprimes generated by the polynomials defining $\cX$ in~\eqref{def: poly_set}.
\begin{multline}\label{def: preprime}
 \CR(\cX)_r:=\Big\{\sum_{\alpha,\beta\in\NN^m}
 \sigma_{\alpha,\beta}\bg^\alpha(1-\bg)^\beta+\sum_{i=1}^p\tau_i h_i:
 \ \sigma_{\alpha,\beta}\geq0,\ \tau_i\in\RR[\bx],\\
 \deg(\bg^\alpha)+\deg((1-\bg)^\beta)\leq r,\quad
 \deg\tau_i+\deg h_i\leq r\Big\},
\end{multline}
where $\bg = (g_1,\dots,g_m)$, $\bg^\alpha = \prod_{j=1}^mg_j^{\alpha_j}$, and $(1-\bg)^\beta = \prod_{j=1}^m(1-g_j)^{\beta_j}$. The preprime $\CR(\CX)$ associated to $\cX$ is defined as in~\eqref{def: preprime} after lifting all constraints on degree of component polynomials. 
The truncated preprime yields the following convex relaxation of~\eqref{POP}:
\begin{equation}\label{hierarchy: LP}
     \lb(f,\CR(\CX))_r = \sup\{\tau \in \RR:\ f(\bx) - \tau \in \CR(\CX)_{r}\}.
\end{equation}
Since the coefficients $\sigma_{\alpha,\beta}$ are nonnegative scalars and all remaining constraints are linear coefficient-matching conditions, the relaxation~\eqref{hierarchy: LP} can be formulated as an LP. Its asymptotic convergence follows from the Krivine--Stengle Positivstellensatz.
\begin{theorem}
    Assume $1- g_j(\bx) \geq 0$ on $\cX$, which can be achieved by a suitable normalization on $\bg$, and $\{1,g_1,\ldots,g_m,h_1,\ldots,h_p\}$ generates $\RR[\bx]$. Then $f \in \CR(\CX)$ for any $f(\bx) > 0$ on $\cX$.
\end{theorem}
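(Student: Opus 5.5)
The plan is to reduce to the classical Krivine--Stengle / Kadison--Dubois representation theorem for Archimedean preprimes. That theorem needs two inputs, which we establish in turn: that $\CR(\cX)$ is a preprime, and that it is Archimedean.

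\textbf{Step 1: $\CR(\cX)$ is a preprime.} Its elements are nonnegative combinations of products $\bg^\alpha(1-\bg)^\beta$ plus elements of the ideal $I=(h_1,\dots,h_p)$. Hence $\CR(\cX)$ is closed under addition and multiplication. It contains all nonnegative constants, since $\alpha=\beta=0$ gives $\bg^0(1-\bg)^0=1$. It contains $g_j$ and $1-g_j$ for every $j$, and it contains the ideal $I$.

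\textbf{Step 2: $\CR(\cX)$ is Archimedean.} It suffices to show that the set $H=\{q\in\RR[\bx]: N\pm q\in\CR(\cX)\text{ for some }N\}$ equals $\RR[\bx]$.
\begin{itemize}
\item $H$ is a subalgebra, by the standard identity
\[
(N-q)(M-s)+(N+q)(M+s)=2(NM+qs),
\]
together with closure of $\CR(\cX)$ under sums.
\item $H$ contains each $g_j$, because $1-g_j\in\CR(\cX)$ and $1+g_j = 1+g_j$ is a nonnegative combination of $1$ and $g_j$. It contains each $h_i$, because $\pm h_i\in I\subset\CR(\cX)$.
\item The hypothesis that $\{1,g_1,\dots,g_m,h_1,\dots,h_p\}$ generates $\RR[\bx]$ as an algebra then gives $H=\RR[\bx]$.
\end{itemize}
This is where the assumption is used: it makes every coordinate $x_k$ bounded by the preprime.

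\textbf{Step 3: apply the representation theorem.} The Kadison--Dubois / Krivine--Stengle theorem for Archimedean preprimes says the following. If $f>0$ on the set $K$ of ring homomorphisms $\phi:\RR[\bx]\to\RR$ with $\phi(\CR(\cX))\ge0$, then $f\in\CR(\cX)$. Every such $\phi$ is evaluation at a point $\bx=(\phi(x_1),\dots,\phi(x_n))$ with $g_j(\bx)\ge0$, $1-g_j(\bx)\ge0$ and $h_i(\bx)=0$, i.e.\ a point of $\cX$ (using $1-g_j\ge0$ on $\cX$). Hence positivity of $f$ on $\cX$ gives $f\in\CR(\cX)$.

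To make the argument self-contained I would prove the representation theorem by the usual route. Suppose $f\notin\CR(\cX)$. Archimedeanity makes $1$ an order unit. Eidelheit--Kakutani separation, applied to the convex cone $\CR(\cX)$ with interior point $1$, gives a linear functional $L\ge0$ on $\CR(\cX)$ with $L(1)=1$ and $L(f)\le0$. The main obstacle is upgrading $L$ to a point evaluation. The standard device is to pick $L$ as an extreme point of the weak-$*$ compact convex set of normalized positive functionals, and show it is multiplicative. For $p\in\CR(\cX)$ with $0<L(p)$ and $p\le 1$ modulo $\CR(\cX)$, the functionals $L(p\,\cdot)/L(p)$ and $L((1-p)\,\cdot)/L(1-p)$ decompose $L$ convexly. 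Extremality then forces $L(pq)=L(p)L(q)$, which extends to all $p$ by Archimedeanity. A multiplicative $L$ is evaluation at some $\bx\in\cX$, and then $L(f)=f(\bx)>0$ contradicts $L(f)\le0$. Alternatively, I would simply cite this classical theorem (Krivine 1964; Stengle; see Marshall's book, Thm.\ 5.4.4).
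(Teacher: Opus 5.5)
Your proposal is correct and takes the same route as the paper. The paper gives no argument of its own and simply cites the classical Krivine/Kadison--Dubois representation theorem (Krivine; Becker--Schwartz; Marshall; Vasilescu). Your check that $\CR(\cX)$ is an Archimedean preprime whose real characters are evaluations at points of $\cX$ is exactly the reduction behind those citations.

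One small fix in your optional self-contained sketch: an extreme point of the state space need not satisfy $L(f)\le 0$. Take $L$ as an extreme point of the closed face on which $L\mapsto L(f)$ attains its minimum, which exists by Krein--Milman, so that $L(f)\le 0$ is preserved.
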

This theorem was first introduced by Krivine in~\cite{krivine1964anneaux,krivine1964quelques}, and later revisited in the works~\cite{becker1983darstellungssatz,marshall2002general,vasilescu2003spectral}. If the set $\{1,g_1,\ldots,g_m\}$ fails to generate $\RR[\bx]$, one can assume that $\cX$ is contained in the hypercube $[0,1]^n$ by suitable normalization and replace the set of polynomials defining $\cX$ by 
\begin{displaymath}
    \cX := \{\bx \in \RR^n:\; x_i \geq 0,\; 1-x_i \geq 0 \, \forall\, i \in [n],\, g_j(\bx) \geq 0 \, \forall\, j \in [m],\, h_i(\bx) = 0 ,\; \forall\, i \in [p]\}.
\end{displaymath}
Then the $r$--truncated preprime is defined by 
\begin{multline}\label{def: extended preprime}
 \CR(\cX)_r:=\Big\{\sum_{\alpha\in\NN^m,\,\beta,\gamma\in\NN^n}
 \sigma_{\alpha,\beta,\gamma}\bg^\alpha\bx^\beta(1-\bx)^\gamma+\sum_{i=1}^p\tau_i h_i:
 \ \sigma_{\alpha,\beta,\gamma}\geq0,\ \tau_i\in\RR[\bx],\\
 \deg(\bg^\alpha)+|\beta|+|\gamma|\leq r,\quad
 \deg\tau_i+\deg h_i\leq r\Big\},
\end{multline}
for which the sequence $\{\lb(f,\CR(\CX))_r\}$ still converges to the global optimal value $\fmin$ according to Extended-Handelman’s Positivstellensatz that is first introduced in~\cite{handelman1988representing} and restated in the compact case in~\cite{kuryatnikova2024reducing}. The theorem is stated as follows: 
\begin{theorem}~\cite[Proposition~3.5]{kuryatnikova2024reducing}
    Assume $\cX$ is contained in $[0,1]^n$ and the truncated preprime is defined as in~\eqref{def: extended preprime}. Then $f \in \CR(\CX)$ for any $f(\bx) > 0$ on $\cX$.
\end{theorem}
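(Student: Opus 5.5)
The plan is to reduce the statement to Handelman's classical theorem for polytopes, applied to the box $[0,1]^n$, after handling the extra constraints $g_j\ge 0$ and $h_i=0$. I would first reduce to the ideal-free case. Let $I$ be the ideal generated by $h_1,\dots,h_p$. Modulo $I$ the certificate only has to hold on the real variety; the terms $\sum_i \tau_i h_i$ absorb everything in $I$. Since $\CX$ is compact and $f>0$ on $\CX$, a compactness argument together with a Positivstellensatz-type separation should remove the equalities. Concretely, I would replace each equality $h_i=0$ by the pair of inequalities $\pm h_i\ge 0$, rescaled so they lie in $[-1,1]$ on the box; the products $\bg^\alpha$ then include these terms. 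Alternatively I would use $-h_i^2\ge0$. This puts all defining data into the form $\{\bx\in[0,1]^n:\ g_j\ge0\}$.

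The main step is to eliminate the $g_j$ by working on the compact set $[0,1]^n$. Since $f>0$ on $\CX$ and $\CX$ is compact, I would seek a representation
\[
f = F_0 + \sum_{j} \lambda_j(\bx)\, g_j ,
\]
where $F_0$ is strictly positive on all of $[0,1]^n$ and the multipliers $\lambda_j$ are polynomials that are themselves nonnegative combinations of products $\bg^\alpha\bx^\beta(1-\bx)^\gamma$. The standard device is the Krivine--Stengle/Schm\"udgen-style argument for the preprime $\CR(\CX)$ generated by $x_i$, $1-x_i$ and $g_j$. This preprime is Archimedean, because $N\pm x_i\in\CR(\CX)$ follows from $x_i\ge0$ and $1-x_i\ge0$, so the Archimedean property transfers to every polynomial. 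I would therefore invoke the Kadison--Dubois representation theorem (the Archimedean Positivstellensatz for preprimes, in the form of the Krivine theorem stated above). Its hypothesis that the generators span $\RR[\bx]$ as an algebra holds here, since $1,x_1,\dots,x_n$ are among them. That theorem gives: if $f>0$ on the nonnegativity set of the generators, then $f$ lies in the preprime. The nonnegativity set is exactly $\CX$ once the $h_i$ are encoded as above, which yields $f\in\CR(\CX)$.

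In order, the steps are as follows. First, I would check that $\CR(\CX)$ is a preprime, that is, closed under addition and multiplication, containing $\RR_{\ge0}$, and in the equality case a module over $I$. Second, I would prove the Archimedean property. For this I would show that the set of bounded elements $\{q: N\pm q\in\CR(\CX)\text{ for some }N\}$ is a subring containing the $x_i$, so it equals $\RR[\bx]$. Third, I would apply Kadison--Dubois: by a Hahn--Banach/Zorn argument, any $f\notin\CR(\CX)$ is separated by a ring homomorphism $\RR[\bx]\to\RR$ that is nonnegative on the preprime. Such a homomorphism is evaluation at a point of $\CX$, contradicting $f>0$ there.

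I expect the third step to be the main obstacle. It requires constructing the maximal proper semiring containing $\CR(\CX)$ and showing it induces a point evaluation, which uses the Archimedean property essentially. The handling of the $h_i$ also needs care so that the nonnegativity set is exactly $\CX$ and not merely a set containing it. If that step becomes cumbersome, I would instead cite \cite[Proposition~3.5]{kuryatnikova2024reducing} directly, as the paper does.
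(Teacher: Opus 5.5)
The paper gives no proof of this statement; it quotes \cite[Proposition~3.5]{kuryatnikova2024reducing}. Your Steps 1--3 do give a valid self-contained proof. Let $T$ be the semiring generated by $\RR_{\geq0}$, $x_i$, $1-x_i$, $g_j$, and let $I=(h_1,\dots,h_p)$. Then:
\begin{itemize}
\item $T+I$ is a preprime, since it is closed under products because $I$ is an ideal, and it equals the untruncated $\CR(\cX)$.
\item It is Archimedean: $1\pm x_i\in T$, and the bounded elements of a preprime form a subring, by $NM\mp pq=\tfrac12\big[(N+q)(M\mp p)+(N-q)(M\pm p)\big]$.
\item The ring homomorphisms $\RR[\bx]\to\RR$ that are nonnegative on $T+I$ are exactly the evaluations at points of $\cX$.
\end{itemize}
Kadison--Dubois then gives $f\in\CR(\cX)$. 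You do not need to encode $h_i=0$ by inequalities, since $T+I$ is already a preprime (or work in $\RR[\bx]/I$). If you do use $\pm h_i$ as generators, state that every product containing such a factor lies in $I$; the rescaling to $[-1,1]$ plays no role.

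A more elementary route matches the lifting the paper uses in Section~\ref{sec: LP}:
\begin{itemize}
\item Normalize $\mon{g_j}\leq1$; this does not change $\CR(\cX)$.
\item By compactness, $F(\bx,\bu)=f(\bx)+\lambda\big(\sum_ih_i(\bx)^2+\sum_j(u_j-g_j(\bx))^2\big)$ is positive on $\mathrm H^{n+m}$ for large $\lambda$.
\item A high-order tensor Bernstein expansion writes $F$ as a nonnegative combination of $\bx^\beta(1-\bx)^\gamma\bu^\alpha(1-\bu)^\delta$.
\item Substitute $\bu=\bg(\bx)$. Remove the factors $1-g_j$ via $1-g_j=1-\mon{g_j}+\sum_\alpha|g_{j,\alpha}|(1-\operatorname{sign}(g_{j,\alpha})\bx^\alpha)$, where $1\pm\bx^\alpha$ lie in the semiring generated by $x_i,1-x_i$ (as in Lemma~\ref{lem: Handelman pseudo moment sequence}). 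The penalty $\lambda\sum_ih_i^2$ becomes the ideal part.
\end{itemize}
Your route is shorter and works for any Archimedean preprime, but it is nonconstructive. The lifting route needs only Bernstein approximation and yields explicit degree information, which is what Section~\ref{sec: LP} uses in dual form.

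Three points in your write-up should be fixed. First, the announced reduction to Handelman's theorem via $f=F_0+\sum_j\lambda_jg_j$ is never carried out; producing such a decomposition is essentially the whole theorem. It is then silently replaced by Kadison--Dubois, so drop it or replace it with the lifting above. Second, ``in the form of the Krivine theorem stated above'' is inaccurate. That theorem puts $f$ in the preprime that also contains the factors $(1-g_j)^\beta$, which is larger than the cone in~\eqref{def: extended preprime}. Cited literally, it proves a weaker statement unless you add the identity for $1-g_j$ above; applying Kadison--Dubois directly to $T+I$, as in your Steps 1--3, avoids this. Third, in Step~3 a Hahn--Banach separation produces only a linear functional nonnegative on the cone. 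Upgrading it to a ring homomorphism is precisely the content of the Archimedean representation theorem, so cite that theorem rather than sketch it.
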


In the matrix setting, where $\cX$ is defined by a matrix inequality as in~\eqref{def: matrix_set}, an analogue of the previous framework in the scalar setting was established by Hol and Scherer~\cite{hol2004sum}. In particular, they generalized Putinar's Positivstellensatz to semialgebraic sets of the form~\eqref{def: matrix_set}.

Extending from sos--polynomials, we say that a symmetric $m \times m$ polynomial matrix $P(\bx)$ is a {\em sos-polynomial matrix} if there exists a (not necessarily square) polynomial matrix $T(\bx)$ such that $P(\bx) = T(\bx)^{\top}T(\bx)$. We denote the set of all $m \times m$ sos--polynomial matrix by $\bS\Sigma[\bx]^m$. We define the {\em matrix quadratic module} $\cQ(\cX)$ associated with $\cX$ in \eqref{def: matrix_set} as: 
\begin{equation*} 
    \cQ(\mathcal{X}):= \left\{ \sigma(\bx) + \langle R(\bx),G(\bx)\rangle \;:\; \sigma \in \Sigma[\bx],\ R(\bx) \in \bS\Sigma[\bx]^m \right\},
\end{equation*} 
and its truncation at degree $2r$, defined as follows: 
\begin{multline*}
    \cQ(\cX)_{2r}:= \Bigl\{ \sigma(\bx) + \langle R(\bx),G(\bx)\rangle\;: \sigma \in \Sigma[\bx],\ R(\bx) \in \bS\Sigma[\bx]^m,\\ \deg \sigma \leq 2r,\; \deg R \leq 2(r-\lceil G\rceil) \Bigr\},
\end{multline*}
where $\lceil G\rceil:=\lceil\deg G/2\rceil$. Since $R$ is an SOS matrix, its degree is even, so the multiplier condition is equivalent to $\deg R+\deg G\leq2r$. Membership can be checked by an SDP (see e.g., \cite{hol2004sum,hol2005sum,henrion2006convergent}). As a result, Hol and Scherer proposed the following hierarchy of lower bounds to approximate the optimal value $f_{\min}$ of $f$ over $\cX$:
\begin{equation}\label{hierarchy: Hol and Scherer}
    \lb(f,\cQ(\cX))_r :=\; \sup\{t \in \bR\;:\; f- t \in \cQ(\cX)_{2r}\}.
\end{equation}
Under the Archimedean condition on $\cX$, the lower bound $\lb(f,\cQ(\cX))_r$ converges to $f_{\min}$ as $r \to \infty$, which is secured by a natural extension of Putinar's Positivstellensatz \cite{putinar1993positive}, as stated in the following theorem.

\begin{theorem}\cite[Theorem 1]{hol2004sum}\label{GPutinar} 
Suppose $\mathcal{X}$ satisfies the Archimedean condition, that is, there exist an SOS polynomial $\sigma$, an SOS polynomial matrix $R(\bx)$, and a scalar $R$ such that 
\begin{displaymath}
    R - \bx^\top\bx  = \sigma(\bx) +  \langle R(\bx),G(\bx) \rangle.
\end{displaymath}
Then every positive polynomial $f$ on $\mathcal{X}$ belongs to the quadratic module $\CQ(\mathcal{X})$.
\end{theorem}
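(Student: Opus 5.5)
The plan is to reduce the matrix statement to the scalar Putinar Positivstellensatz (Theorem~\ref{thm: Putinar}) by scalarizing the constraint $G(\bx)\succeq 0$ with auxiliary unit vectors, and then to eliminate those auxiliary variables. The classical alternative is the Hol--Scherer argument: treat $\cQ(\cX)$ as a quadratic module over the symmetric-matrix-weighted generators and show it is Archimedean. Combined with Kadison--Dubois / Jacobi representation theory, one then needs, for $f$ with $f\notin\cQ(\cX)$, a separating linear functional $L$ with $L\geq 0$ on $\cQ(\cX)$ and $L(f)<0$.

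\textbf{Step 1.} Show that the set $H:=\{p\in\RR[\bx]: N\pm p\in\cQ(\cX)\ \text{for some } N\}$ is a subring containing each $x_i$. Closure under sums is immediate. For products I would use the identities $N^2-p^2=\tfrac12\big((N+p)^2(N-p)+(N-p)^2(N+p)\big)/N$, which need $\cQ(\cX)$ closed under multiplication by squares; this holds because $s^2\langle R,G\rangle=\langle s^2R,G\rangle$ and $s^2R$ is again an SOS matrix. For the variables, $R-\bx^\top\bx\in\cQ(\cX)$ gives $R-x_i^2\in\cQ(\cX)$ after adding the squares $x_k^2$, $k\neq i$, and then $\tfrac{R+1}{2}\pm x_i=\tfrac12\big((R-x_i^2)+(1\pm x_i)^2\big)\in\cQ(\cX)$. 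Hence $\cQ(\cX)$ is Archimedean, meaning $H=\RR[\bx]$.

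\textbf{Step 2.} Suppose $f>0$ on $\cX$ but $f\notin\cQ(\cX)$. Since $1$ is an algebraic interior point of the Archimedean convex cone $\cQ(\cX)$, Eidelheit separation gives a linear functional $L$ with $L(1)=1$, $L\geq0$ on $\cQ(\cX)$, and $L(f)\leq0$. Archimedeanity bounds $|L(\bx^\alpha)|$ geometrically in $|\alpha|$, so Haviland / the compact-support moment theorem yields a representing measure $\mu$ with $L(p)=\int p\,d\mu$, supported in the ball of radius $\sqrt R$.

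\textbf{Step 3.} This is the main obstacle: showing $\operatorname{supp}\mu\subset\cX$ using only the matrix positivity $L(\langle R,G\rangle)\geq0$. For any polynomial vector $\bu(\bx)\in\RR[\bx]^m$ and polynomial $s$, take $R=s^2\bu\bu^\top$. This gives $\int s^2\,\bu^\top G\bu\,d\mu\geq0$ for all $s$. By density of polynomials in $L^2(\mu)$ on the compact support, $\bu^\top G\bu\geq0$ holds $\mu$-a.e. for each fixed $\bu$. Taking $\bu$ ranging over a countable dense set of constant vectors in $\RR^m$ gives $G(\bx)\succeq0$ $\mu$-a.e., so $\operatorname{supp}\mu\subset\cX$ by continuity. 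Then $L(f)=\int f\,d\mu>0$, since $f>0$ on the compact set $\cX$ and $\mu$ is a probability measure. This contradicts $L(f)\leq0$, so $f\in\cQ(\cX)$.
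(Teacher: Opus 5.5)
The paper does not prove this theorem; it quotes it from~\cite{hol2004sum} as background for the hierarchy~\eqref{hierarchy: Hol and Scherer}, so there is no internal argument to match. Your Steps 1--3 nonetheless form a correct, self-contained proof along the classical Archimedean separation/moment route. Each step checks out:
\begin{itemize}
\item Step 1 is right. It uses the identity $N^2-p^2=\frac{1}{2N}\big((N+p)^2(N-p)+(N-p)^2(N+p)\big)$, closure under multiplication by squares via $s^2\langle R,G\rangle=\langle s^2R,G\rangle$, polarization $pq=\frac14\big((p+q)^2-(p-q)^2\big)$, and the certificate for $\frac{R+1}{2}\pm x_i$.
\item Step 2 is legitimate because $1$ is an algebraic interior point of $\cQ(\cX)$, so Eidelheit--Kakutani separation applies in $\RR[\bx]$. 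The same fact forces $L(1)>0$, which justifies the normalization.
\item Step 3 is the only place the matrix structure enters. There the rank-one SOS multipliers $s^2\bu\bu^\top$ and the $L^2(\mu)$-density of polynomials correctly give $G\succeq0$ $\mu$-a.e., and hence $\operatorname{supp}\mu\subseteq\cX$.
\end{itemize}
Compared with simply citing the result, your route shows that the only matrix-specific ingredient is that $s^2\bu\bu^\top$ is an SOS matrix. Everything else is the scalar Archimedean machinery. The price is reliance on a moment-representation theorem.

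Two points need tidying. First, your opening sentences announce a scalarization with auxiliary unit vectors followed by elimination, and an appeal to Kadison--Dubois. You use neither, so drop them.

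Second, in Step 2 the appeal to Haviland's theorem is not justified as written. You never show that $L$ is nonnegative on every polynomial nonnegative on a compact set. There are two clean fixes:
\begin{itemize}
\item Invoke the theorem that a functional nonnegative on squares with $|L(\bx^\alpha)|\leq C\rho^{|\alpha|}$ is represented by a measure on $[-\rho,\rho]^n$. References are Berg--Christensen--Ressel, or the GNS construction with bounded commuting multiplication operators plus the spectral theorem. The geometric bound follows from $R^{|\beta|}-\bx^{2\beta}\in\cQ(\cX)$, obtained by the telescoping identity behind~\eqref{eq: coordinate certificate} with $R-x_i^2$ in place of $1-x_i^2$, together with Cauchy--Schwarz.
\item Alternatively, note that $s^2(R-\|\bx\|^2)\in\cQ(\cX)$ for every polynomial $s$. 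Hence $L$ is nonnegative on the scalar quadratic module of the ball, which by Theorem~\ref{thm: Putinar} contains every polynomial positive on the ball. Adding $\epsilon$ then makes Haviland's theorem applicable on the ball.
\end{itemize}
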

For better representation, in this paper, we name the hierarchies~\eqref{hierarchy: poly-preordering},~\eqref{hierarchy: poly-quadratic},~\eqref{hierarchy: LP}, and~\eqref{hierarchy: Hol and Scherer} the Schm\"udgen-type hierarchy, the Putinar-type hierarchy, the Krivine--Stengle or extended-Handelman-type hierarchy, and the Hol-Scherer-type hierarchy.

\subsection{Related work} A natural question is how efficient the aforementioned hierarchies are, which is interpreted as how fast their lower bounds converge to the global optimal value $\fmin$ of~\eqref{POP}. We outline here the related works for all aforementioned certificates of positivity, starting with convergence rate related results in the scalar case as $\CX$ is defined as in~\eqref{def: poly_set}.

The Schm\"udgen-type, or preordering-type, moment--SOS hierarchy admits some of the sharpest known convergence rates. When the feasible set $\cX$ is a simple set, namely, the unit ball, the hypercube, or the standard simplex, the works~\cite{slot2111sum,laurent2023effective,S.o.S-on-simplex} develop techniques based on the Christoffel--Darboux (CD) kernel to approximate positive polynomials by SOS polynomials. These techniques prove an explicit convergence rate of $\mathrm{O}(1/r^2)$. For the unit sphere $\mathbb{S}^{n-1}$, the same convergence rate was established in~\cite{fang2021sum} for homogeneous polynomial objectives and subsequently extended to general polynomial objectives in~\cite{blomenhofer2024moment}. For the binary hypercube $\cX=\{0,1\}^n$, it follows from~\cite{fawzi2016,sakaue2017} that the corresponding Schm\"udgen-type moment--SOS hierarchy achieves finite convergence. More precisely, for a polynomial objective of degree $d$, the hierarchy is exact whenever $r\geq\lceil(n+d-1)/2\rceil$. More recently, the rate $\mathrm{O}(1/r^2)$ for products of simple sets was established in~\cite{magron2025convergence}.

The Putinar-type (quadratic-module-type) moment-SOS hierarchy is weaker than the Schm\"udgen-type hierarchy, but is widely used in practice because it requires significantly fewer localizing matrices. When $\cX$ is the unit ball, the method based on the CD kernel gives the rate $\mathrm{O}(1/r^2)$~\cite{slot2111sum}. For the hypercube, Gribling et al.~\cite{gribling2025revisitingconvergenceratelasserre} studied the quadratic-module hierarchy; their more recent squared-kernel construction gives the rate $\mathrm{O}(\log_2^3(r)/r^2)$~\cite{gribling2026squared}.

Under the Archimedean condition, and for the Putinar-type hierarchy, Nie and Schweighofer~\cite{Putinar-complexity} provided a logarithmic convergence rate. Polynomial degree estimates were subsequently established by Baldi and Mourrain~\cite{moment-approximation}. Constructing penalty functions has become one of the main methods for studying these rates; see~\cite{moment-approximation,Baldi_2025,heijmans2026degree}. To compare their exponents, we use the convention
\begin{displaymath}
    \bd(\bx,\cX)\leq c_{\Lo}\bp(\bx)^{\Lo},\qquad0<\Lo\leq1.
\end{displaymath}
 
When a reference writes its error bound as $\bd(\bx,\cX)^L\leq c\bp(\bx)$, its exponent satisfies $L=1/\Lo$.

Baldi et al.~\cite{Baldi_2025} obtained a degree exponent $7L+3$. In the above convention, this gives an objective-value rate $\mathrm{O}(1/r^{\Lo/(7+3\Lo)})$, and $\mathrm{O}(1/r^{1/10})$ under their constraint qualification condition (CQC). Heijmans-Kuryatnikova et al.~\cite{heijmans2026degree} give a degree exponent $2L$ for Putinar-type certificates, leading to $\mathrm{O}(1/r^{\Lo/2})$. Their construction uses lifting, with the analysis carried out at the level of positivity certificates. Combining this construction with the improved hypercube estimate gives exponent $\Lo$; Gribling et al.~\cite{gribling2026squared} state the resulting general-set rate as $\mathrm{O}(\log_2^3(r)/r^{\Lo})$. Other works studying lower-bound convergence include~\cite{deKlerkLaurent2010,S.o.S-on-simplex,Putinar-complexity}. In the univariate case, Henrion and Safey El Din~\cite{henrion2026univariate} obtain $\mathrm{O}(1/r^2)$ under boundedness, using the special structure of one-dimensional feasible sets.

For the Krivine--Stengle-type and Extended-Handelman-type hierarchies, convergence rates have also been studied. On simple sets such as the hypercube and simplex, the classical Bernstein-polynomial estimates give $\mathrm{O}(1/r)$; see, e.g.,~\cite{deKlerkLaurent2010,S.o.S-on-simplex}. Effective degree bounds for these Positivstellens\"atze on general compact semi-algebraic sets were established in~\cite{heijmans2026degree}, leading to $\mathrm{O}(1/r^{\Lo/2})$ in the above convention.

In the matrix setting, where $\cX$ is defined as in~\eqref{def: matrix_set}, the Hol-Scherer-type hierarchy was introduced in~\cite{hol2004sum,hol2005sum,Kojima}. Tran et al proved a polynomial rate in the work\cite{tran2024convergence}, and Huang~\cite{huang2025complexity} obtained the convergence rate $\mathrm{O}(1/r^{1/(7\eta+3)})$, where $\eta$ is the {\L}ojasiewicz exponent of an auxiliary scalar description. This exponent cannot in general be identified with $1/\Lo$ for the direct spectral residual.

\subsection{Contribution}
Our main contribution is to extend the method introduced in~\cite{tran2025convergence}, which proves $\mathrm{O}(1/r^{\Lo})$ for its Schm\"udgen-type construction on compact sets, to the other hierarchies described above. We first compare the resulting rates with the best rates recalled in the preceding subsection. Our estimates concern the Hausdorff distance between truncated pseudo-moment sets and moment sets; by Lemma~\ref{lemma: distance to convergence rate}. Therefore we provide uniform convergence rate over polynomials of a fixed degree.

For the Putinar-type hierarchy on $\mathrm H^n$, the best available bound is $\mathrm{O}(\log_2^3(r)/r^2)$~\cite{gribling2026squared}. Theorem~\ref{thm: Hausdorff over H^n} recovers this rate for the Hausdorff distance. On general sets, the strongest exponent supplied by the preceding results is $\Lo$, obtained by combining~\cite{heijmans2026degree} with the improved hypercube estimate. Theorem~\ref{thm: convergence rate Putinar-type} recovers this exponent with the bound $\mathrm{O}((\log_2 r)^{3\Lo/2}/r^{\Lo})$. Thus it improves the power of $r$ compared with the earlier $\mathrm{O}(1/r^{\Lo/2})$ bound, while giving a moment-approximation proof of the power already available from the combined results.

For the normalized Krivine--Stengle-type and Extended-Handelman-type hierarchies, the best rates recalled above are $\mathrm{O}(1/r)$ on $\mathrm H^n$ and $\mathrm{O}(1/r^{\Lo/2})$ on general compact sets~\cite{deKlerkLaurent2010,heijmans2026degree}. Theorems~\ref{thm: bound on Hausdorff Handelman over hypercube} and~\ref{thm: convergence rate Handelman} recover these rates for the Hausdorff distance. The contribution in this case is the common moment-approximation argument and its uniform estimate, with the same exponent of $r$.

For the Hol-Scherer-type hierarchy, the existing bound $\mathrm{O}(1/r^{1/(7\eta+3)})$~\cite{huang2025complexity} uses an auxiliary scalar error bound. Our preceding work~\cite[Theorem 5.1(i)]{tran2024convergence} gives $\mathrm{O}(1/r^{2\Lo/(n+2\Lo+5)})$ for a reduced matrix hierarchy. Theorem~\ref{thm:matrix} gives $\mathrm{O}((\log_2 r)^{3\Lo/2}/r^{\Lo})$ for the Hol-Scherer-type hierarchy on $[-1,1]^n$ with explicit generators $1-x_i^2$. Since $\Lo>2\Lo/(n+2\Lo+5)$, this improves the power in the preceding reduced-hierarchy bound. Relative to Huang's bound, the power improves when $\Lo>1/(7\eta+3)$, agrees when equality holds, and is smaller otherwise; equality of powers alone does not remove the logarithmic factor. These comparisons retain the respective generator assumptions and error-bound conventions. Here $\Lo$ is associated with $\max\{0,-\lambda_{\min}(G(\bx))\}$, so no ordering of $\Lo$ and $1/(7\eta+3)$ is assumed.

Table~\ref{tab: convergence exponents} collects the general-set rates for fixed problem data and moment degree, under the assumptions of the respective results. Each rate is written using $\mathrm{O}(1/r^\gamma)$, with its logarithmic factor displayed separately. The existing column concerns objective-value bounds, while the bounds in this paper hold for the uniform moment distance and hence also for objective values.
\begin{table}[!htb]
\centering
\small
\setlength{\tabcolsep}{5pt}
\renewcommand{\arraystretch}{1.18}
\begin{tabular}{@{}>{\raggedright\arraybackslash}p{4.0cm}>{\raggedright\arraybackslash}p{5.45cm}>{\raggedright\arraybackslash}p{4.75cm}@{}}
\hline
Hierarchy & Existing bound & This paper \\
\hline
Putinar-type
& $\mathrm{O}(1/r^{\Lo/2})$\newline
~\cite{heijmans2026degree}\newline
& $(\log_2 r)^{3\Lo/2}\,\mathrm{O}(1/r^{\Lo})$\newline
Theorem~\ref{thm: convergence rate Putinar-type} \\[4pt]
\hline
Krivine--Stengle-type /\newline Extended-Handelman-type
& $\mathrm{O}(1/r^{\Lo/2})$\newline
\cite{heijmans2026degree}
& $\mathrm{O}(1/r^{\Lo/2})$\newline
Theorem~\ref{thm: convergence rate Handelman} \\[4pt]
\hline
Hol-Scherer-type
& $\mathrm{O}(1/r^{1/(7\eta+3)})$\newline
\cite{huang2025complexity}
& $(\log_2 r)^{3\Lo/2}\,\mathrm{O}(1/r^{\Lo})$\newline
Theorem~\ref{thm:matrix} \\
\hline
\end{tabular}
\caption{Convergence rates on compact semialgebraic sets.}
\label{tab: convergence exponents}
\end{table}

\subsection{Methodology: outline of the proof technique}\label{sec: method}
We now explain the method presented via 2 main stages. The first stage establishes a base error $\varepsilon_r$ from an estimate on a simple set. The second stage uses this estimate after a series of lifting and sequences to control the Hausdorff distance of truncated pseudo-moment sequence, which measures how far a truncated pseudo-moment sequence to the set of actual truncated moment sequences.


\paragraph{Stage 1: Base estimation.}
For a fixed relaxation order, let $\cS(\cX)$ denote one of the truncated certificates $\CT(\cX)_{2r}$, $\CQ(\cX)_{2r}$, or $\CR(\cX)_r$. We establish compactness of the corresponding normalized pseudo-moment set and equality of its primal and strong duality between lower bounds given via SOS presentation and moment sequence presentation. We use the existing convergence rate of $[0,1]^n$ in the scalar cases, $[-1,1]^n$ in the matrix case to provide these properties, and set up the base error.

In the scalar cases, we use the base convergence rates $\mathrm{O}(\log_2^3(r)/r^2)$ for the quadratic module over $[0,1]^n$ and $\mathrm{O}(1/r)$ for the Extended-Handelman-type preprime over $[0,1]^n$ to delivery on the Hausdorff distance.

In the matrix case, we use the base convergence rates $\mathrm{O}(\log_2^3(r)/r^2)$ for the quadratic module over $[0,1]^n$ the squared-kernel in Section~\ref{sec:matrix} to yield a bound on the Hausdorff distance and and a lower bound for spectrum of $G$.

\paragraph{Stage 2: Lift and projection.}
For scalar inequalities, introduce auxiliary variables $u_j=g_j(\bx)$ after normalizing $\mon{g_j}\leq1$ to eliminate polynomial inequalities of the form $g_j(\bx) \geq 0$. The set
\begin{displaymath}
   \cK=\{(\bx,\bu)\in\mathrm{H}^{n+m}:h_i(\bx)=0,\; g_j(\bx)-u_j=0\} 
\end{displaymath}
 
projects onto $\cX$. 

The following map on sequences defined by substitution can be used to lift pseudo-moment sequence on $\cX$ to pseudo-moment sequence on $\cK$.
\begin{displaymath}
   \ell_{\biy^{2r,t}}(\bx^\alpha\bu^\beta) =\ell_{\biy}(\bx^\alpha\bg(\bx)^\beta).
\end{displaymath}

We then project the lifted sequence onto $\CM_t(\mathrm{H}^{n+m})$. The base estimate controls its distance, and the squared equations $h_i^2$ and $(g_j-u_j)^2$ control the average squared violation of its atoms. After retaining the $\bx$-moments,~\eqref{eq: universal moment transfer} gives the desired bound. Figure~\ref{fig: method lifting} illustrates this construction. The upper arrow lifts the pseudo-moment sequence to the auxiliary space; after moment approximation there, the lower arrow projects the atoms onto $\cX$ and retains their degree-$k$ moments. Here $\CY=\mathrm H^n$ and $\CZ=\mathrm H^{n+m}$ denote the original and lifted ambient sets, and $\biy^\varphi$ denotes the lifted sequence.
\begin{figure}[!ht]
    \centering
\begin{subfigure}{0.41\textwidth}
    \begin{adjustbox}{max width=\linewidth}
\begin{tikzpicture}

    \begin{scope}[blend mode=multiply]
    \filldraw[fill=gray!10, draw=black] (1,0) ellipse (3.3cm and 1.5cm);
    \filldraw[fill=white!10, draw=black] (0,0) circle (1.2cm);
    \filldraw[fill=gray!10, draw=black] (0,0) ellipse (1.7cm and 2.7cm);
    \filldraw[fill=white!10, draw=black] (1,0) circle (3.5cm);
    \end{scope}
    
    \node at (0,0) {$\CM_{k}(\CX)$};
    \node at (3.0,0) {$\CM_k(\CY)$};
    \node at (0,1.8) {$\mathcal{P\!M}_k(\cS(\cX))$};
    \node at (1.2,-3.0) {$\mathcal{P\!M}_k(\cS(\CY))$};
    \node at (0,-2.3) (S1) {$\biy$};
    \node at (2.7,-0.8) (S2) {$\overline{\biy}$};
    \node at (0,-0.7) (S3) {$\widetilde{\biy}$};

    \draw[arrow] (S1.east) -- (S2.west);
    \draw[arrow] (S2.west) -- (S3.south);
    \draw[arrow] (S1.east) -- (S3.south);
\end{tikzpicture}
\end{adjustbox}
\end{subfigure}
\hfill
\quad\tikz[overlay,remember picture] 
{\draw[thick, ->] (-1.7,3.5) --node[above]{$\operatorname{\mathbf{Lifting}}$}node[below]{\textbf{via }$\varphi$} (0.5,3.5);
\draw[thick, <-] (-1.7,1.5) --node[above]{$\operatorname{\mathbf{Projection}}$}node[below]{\textbf{via atoms }} (0.5,1.5);}
\quad
  \begin{subfigure}{0.41\textwidth}
    \begin{adjustbox}{max width=\linewidth}
\begin{tikzpicture}

    \begin{scope}[blend mode=multiply]
    \filldraw[fill=gray!10, draw=black] (1,0) ellipse (3.3cm and 1.3cm);
    \filldraw[fill=white!10, draw=black] (0,0) circle (1.2cm);
    \filldraw[fill=gray!10, draw=black] (0,0) ellipse (1.7cm and 2.6cm);
    \filldraw[fill=white!10, draw=black] (1,0) circle (3.5cm);
    \end{scope}
    
    \node at (0,0) {$\CM_{t}(\cK)$};
    \node at (3.0,0) {$\CM_t(\CZ)$};
\node at (0,1.6) {$\mathcal{P\!M}_t(\cS(\cK))$};
    \node at (1.0,-2.9) {$\mathcal{P\!M}_t(\cS(\CZ))$};
    \node at (0,-2.3) (S1) {$\biy^{\varphi}$};
    \node at (2.7,-0.8) (S2) {$\overline{\biy}^\prime$};
    \node at (0,-0.5) (S3) {$\widetilde{\biy}^\prime$};

    \draw[arrow] (S1.east) -- (S2.west);
    \draw[arrow] (S2.west) -- (S3.south);
    \draw[arrow] (S1.east) -- (S3.south);
\end{tikzpicture}
\end{adjustbox}
\end{subfigure}

\caption{Lifting to the auxiliary moment space and projection via atoms.}
\label{fig: method lifting}
\end{figure}

For matrix inequalities, we use the same idea with some modification. The distance to the set of truncated moment sequences now depends on the smallest eigenvalue function 
\begin{displaymath}
    \max\{0,-\lambda_{\min}(G(\bx))\},
\end{displaymath}
which in general is not a polynomial. Handling this function needs to be subtle, and we utilize the squared-kernel defined over $[-1,1]^n$ to resolve this concern. 

To complete Stage 2, we repeatedly use the following common estimate on atoms. Each technical section then only needs to bound the moment displacement and the violation. Let $\CY=[0,1]^n$ in the scalar cases and $\CY=[-1,1]^n$ in the matrix case. Suppose $\cX$ is a nonempty compact subset of $\CY$, and let $\bp:\CY\to\RR_{\geq0}$ satisfy
\[
 \bd(\bx,\cX)\leq c_{\Lo}\bp(\bx)^{\Lo},\qquad 0<\Lo\leq1.
\]
Write $\bv_k$ for the vector of monomials of degree at most $k$, and let $\Lip_k(\CY)$ be a Lipschitz constant of this vector on $\CY$. Suppose a given sequence $\biy$ is approximated by
\[
 \overline{\biy}=\sum_{s=1}^Nw_s\bv_k(\bx_s),\qquad
 \bx_s\in\CY,\quad w_s\geq0,\quad\sum_{s=1}^Nw_s=1.
\]
Choose closest points $\widetilde{\bx}_s\in\cX$ and put $\widetilde{\biy}=\sum_sw_s\bv_k(\widetilde{\bx}_s)\in\CM_k(\cX)$. The triangle inequality, the error bound, and Jensen's inequality give
\begin{equation}\label{eq: atom transfer}
\begin{split}
 \|\overline{\biy}-\widetilde{\biy}\|
 &\leq\Lip_k(\CY)\sum_sw_s\bd(\bx_s,\cX)\\
 &\leq c_{\Lo}\Lip_k(\CY)\sum_sw_s\bp(\bx_s)^{\Lo}\\
 &\leq c_{\Lo}\Lip_k(\CY)
       \left(\sum_sw_s\bp(\bx_s)^2\right)^{\Lo/2}.
\end{split}
\end{equation}
The last inequality uses the concavity of $a\mapsto a^{\Lo/2}$ and the identity $\sum_sw_s=1$. Consequently, whenever
\[
 \|\biy-\overline{\biy}\|\leq e_r,\qquad
 \sum_sw_s\bp(\bx_s)^2\leq\eta_r,
\]
we obtain the common bound
\begin{equation}\label{eq: universal moment transfer}
 \bd(\biy,\CM_k(\cX))
 \leq e_r+c_{\Lo}\Lip_k(\CY)\eta_r^{\Lo/2}.
\end{equation}
No factor depending on the number of atoms is introduced. A representing measure in a lifted space supplies the same estimate by retaining its weights and its $\bx$-coordinates.

In each section, the two quantities to estimate are therefore $e_r$ and $\eta_r$. If the base error is $\varepsilon_r$, the scalar lifts give $e_r=\mathrm{O}(\varepsilon_r)$ and $\eta_r=\mathrm{O}(\varepsilon_r)$, while the matrix projection with unit penalty gives $e_r=\mathrm{O}(\sqrt{\varepsilon_r})$ and $\eta_r=\mathrm{O}(\varepsilon_r)$. Since $0<\Lo\leq1$, both lead to $\mathrm{O}(\varepsilon_r^{\Lo/2})$. Finally, Lemma~\ref{lemma: distance to convergence rate} converts the Hausdorff bound into
\[
 f_{\min}-\mlb(f,\cS(\cX))
 \leq\monc{f}\,\bd_k(\cS(\cX)).
\]
The certificates and degree estimates depend on the hierarchy; the final atom estimate is common to all three cases.

\section{Preliminaries}
\subsection{Notation}

We denote by $\mathbb{B}_R^n$ the closed ball in an $n$--dimensional Euclidean space centered at the origin with radius $R$. We use $|\bx|$ to denote the Euclidean norm of a vector $\bx\in\RR^n$. The distance between a point $\bx$ and a set $\mathcal{A}$ in a Euclidean space is defined as $\bd(\bx,\mathcal{A}) = \inf\{|\by-\bx| : \by\in \mathcal{A}\}$, and the directed Hausdorff distance from a set $\mathcal{A}$ to a set $\mathcal{B}$ is defined by $\bd(\mathcal{A},\mathcal{B}) = \sup\{\bd(\bx,\mathcal{B}) : \bx \in \mathcal{A}\}$. For any integer $m\in\NN$, we denote $[m]:=\{1,\ldots,m\}$.

We use $\bx=(x_1,\dots,x_n)$ to denote a vector of variables and $\RR[\bx]$ to denote the ring of polynomials in $\bx$. Let $\alpha=(\alpha_1,\dots,\alpha_n)$ be a multi-index with nonnegative integer components and length $|\alpha|=\sum_{i=1}^n \alpha_i$. The set of multi-indices of length at most $r$ is denoted by $\NN^n_r=\{\alpha\in\NN^n:|\alpha|\leq r\}$, and the number of monomials of degree at most $r$ is denoted by $s(n,r)=\binom{n+r}{r}$. We let $\overline{\NN}^n_r$ denote the subset of $\NN^n_r$ whose elements have length exactly $r$. The monomials in $\bx$ are written in the form $\bx^\alpha=x_1^{\alpha_1}\cdots x_n^{\alpha_n}$. For any polynomial $f(\bx)=\sum_{\alpha}f_{\alpha}\bx^{\alpha}\in\RR[\bx]$, we define the norms $\monc{f}=\sqrt{\sum_{\alpha}f_{\alpha}^2}$, $\mon{f} = \sum_{\alpha \in \NN^n}|f_{\alpha}|$, and $\lceil f\rceil:=\lceil\deg(f)/2\rceil$. Let $\langle \cdot, \cdot \rangle$ denote the inner product between vectors, and $\vc(f)$ denotes the vector of coefficients of a polynomial $f$. Denote the Kronecker product by $\otimes$. For a vector, $\|\cdot\|$ denotes the Euclidean norm. For a polynomial, we always specify the coefficient norm by $\mon{\cdot}$ or $\monc{\cdot}$. We write $\cheb{f}$ for the sum of the absolute values of the coefficients of $f$ in the tensor Chebyshev basis $T_\alpha(\bx)=\prod_iT_{\alpha_i}(x_i)$ on $[-1,1]^n$. On $\mathrm{H}^n$, we use $\cheb{f\circ\mathbf A}$, where $\mathbf A(\bx)=(\bx+\mathbf1)/2$. The comparisons between these norms are proved in Appendix~\ref{app:norms}. We use $\log_2$ for the logarithm to base two.

For a polynomial $f$ and a nonempty compact set $K$, we write $\supnorm fK:=\max_{\bx\in K}|f(\bx)|$ for the uniform norm of $f$ on $K$.

We use $\mathrm H^n=[0,1]^n$ throughout the scalar sections, and use $[-1,1]^n$ in Section~\ref{sec:matrix}. The unit ball and $\mathrm H^n$ are defined by 
\begin{displaymath}
   \mathrm{B}^n =\{\bx \in \RR^n\;:\; 1 - \|\bx\|^2 \geq 0\},\quad \mbox{and} \quad \mathrm{H}^n = \{\bx \in \RR^n\;:\; x_i(1 - x_i) \geq 0 \; \forall \; i \in [n]\}. 
\end{displaymath}

\subsection{Moment sequence}

From the viewpoint of the moment-SOS hierarchy, the formulations~\eqref{hierarchy: poly-preordering},~\eqref{hierarchy: poly-quadratic},~\eqref{hierarchy: LP}, and~\eqref{hierarchy: Hol and Scherer} are referred to as the dual problems, whose primal problems are conic programs over pseudo-moment sequences. In this section, we briefly recall this concept. 

Write $\bv(\bx)=(\bx^\alpha)_{\alpha\in\NN^n}$ and $\bv_k(\bx)=(\bx^\alpha)_{|\alpha|\leq k}$. Let $\biy$ be an infinite-dimensional vector indexed by $\bv(\bx)$. $\biy:= (\iy_{\alpha})_{\alpha \in \NN^n}$ is said to be a {\em moment sequence} on $\cX$ if there exists a probability measure $\mu$ supported on $\cX$ such that for any multi-index $\alpha \in \NN^n$, the $\alpha$--component of $\biy$ is equal to the $\alpha$--moment of $\mu$, equivalently, 
\begin{displaymath}
    \int_{\cX} \bx^\alpha~d\mu = \iy_{\alpha} \quad \forall \alpha \in \NN^n.
\end{displaymath}
Let $\CM(\cX)$ denote the set of all moment sequences supported on $\cX$. For any positive integer $r$, we define the $r$--truncated moment sequence $\biy=(\iy_{\alpha})_{\alpha \in \NN^n_r}$ to be obtained by taking all components of a moment sequence indexed by multi-indices of length at most $r$. We denote this set by $\CM_r(\cX)$. It is convex, and can be approximated by the set of {\em pseudo-moment sequences}, which can be verified by either an SDP or an LP. 

Because of Tchakaloff's theorem (see e.g.,~\cite{Tchakerloffs_theorem}), for any measure $\mu$ supported on a compact set $\cX$, there exist at most $s(n,k)$ points $\{\bx_j: j\in[s(n,k)]\} \subset \cX$, and corresponding positive weights $\{w_j: j\in [s(n,k)]\}$ satisfying $\sum_{j=1}^{s(n,k)} w_j=1$ such that 
for any $f \in \RR[\bx]_k$, the integral of $f$ over $\CX$ with respect to $\mu$ can be calculated as follows: 
\begin{displaymath}
    \int_{\CX}f(\bx) d\mu(\bx) = \sum_{j=1}^{s(n,k)}w_j f(\bx_j).
\end{displaymath}
This implies that when considering an element $y\in \CM_k(\CX)$, its corresponding measure can always be assumed to be a discrete measure with support contained in $\CX$, equivalently
\begin{displaymath}
        \biy = \sum_{i=1}^{s(n,k)} w_i\bv_k(\bx_i),\qquad \bx_i \in \cX \; \forall i \in [s(n,k)].
\end{displaymath}

Let $\biy = (\iy_{\alpha})_{\alpha \in \NN^n}$ be a real sequence indexed by the
vector of monomials $\bv(\bx)$. We define the Riesz linear functional $\ell_{\biy}: \RR[\bx] \to \RR$ as follows: 
\begin{displaymath}
    f(\bx)= \sum_{\alpha \in \NN^n}f_{\alpha}\bx^{\alpha} \quad \mapsto \quad \ell_{\biy}(f) = \sum_{\alpha \in \NN^n}f_{\alpha}\iy_{\alpha} = \langle \vc(f),\biy \rangle,
\end{displaymath}
For a finite sequence $(\iy_\alpha)_{|\alpha|\leq q}$, the same formula defines $\ell_{\biy}$ only on $\RR[\bx]_q$. Thus a full sequence at relaxation level $r$ defines a functional on $\RR[\bx]_{2r}$ for the SOS hierarchies and on $\RR[\bx]_r$ for the preprime hierarchies.

The Riesz linear functional plays a central role in determining whether a sequence $\biy$ is a moment sequence for a Borel measure (see the Riesz-Haviland Theorem, e.g., \cite[Theorem 3.1]{lasserre2009moments}). As a result, we can use $\ell_{\biy}$ to define the moment matrix and localizing matrix as follows: let $\cX$ be the semi-algebraic set defined as in~\eqref{def: poly_set}, for such $\biy$ indexed in $\bv(\bx)$, the moment matrix $\bM(\biy)$ with rows and columns indexed by $\bv(\bx)$ is defined by
\begin{displaymath}
    \bM(\biy)(\alpha, \beta) = \ell_{\biy}(\bx^{\alpha+\beta})=\iy_{\alpha +\beta}, \quad \forall \alpha, \beta \in \NN^n.
\end{displaymath}
For a given $r \in \NN$, the $r$-truncated moment matrix, denoted by $\bM_r(\biy)$, is the submatrix of $\bM(\biy)$ obtained by extracting the rows and columns of $\bM(\biy)$
indexed by $\bv_r(\bx)$. Its entries use moments through degree $2r$; the index $r$ is the degree of its row and column monomials.
Similarly, for a given polynomial $g \in \RR[\bx]$, the localizing matrix $\bM(g\biy)$ associated with $\biy$ and $g$ is defined by 
\begin{displaymath}
    \bM(g\biy)(\alpha,\beta)=\ell_{\biy}(g(\bx)\bx^{\alpha+\beta})
    = \sum_{\gamma} g_\gamma \iy_{\gamma+\alpha+\beta}, \quad \forall \alpha, \beta \in \NN^n.
\end{displaymath}
The $r$-truncated localizing matrix is similarly constructed by extracting all the rows and columns indexed by $\bv_r(\bx)$ from the localizing matrix $\bM(g\biy)$. For a sequence through degree $2r$, the localizer used in the hierarchy is $\bM_{r-\lceil g\rceil}(g\biy)$, since its entries have degree at most $2(r-\lceil g\rceil)+\deg g\leq2r$. 

In the matrix case, where $\cX$ is defined as in~\eqref{def: matrix_set}, we define the localizing matrix $\bM_r(G\biy)$ by
    \begin{displaymath}
        \bM_r(G\biy) \;=\; \ell_{\biy} \big(G(\bx) \otimes (\bv_r(\bx)\bv_r(\bx)^{\top})\big),
    \end{displaymath}
    where we slightly abuse the notation of the Riesz functional to mean that $\ell_{\biy}$ acts entry-wise on the polynomial matrix $G(\bx) \otimes (\bv_r(\bx)\bv_r(\bx)^{\top})$. Consequently, the work~\cite{tran2025convergence,tran2024convergence,lasserre2005polynomial} presented the primal formulation for the hierarchies~\eqref{hierarchy: poly-preordering},~\eqref{hierarchy: poly-quadratic},~\eqref{hierarchy: LP}, and~\eqref{hierarchy: Hol and Scherer} as follows: 
    \begin{enumerate}
        \item The primal formulation for the Schm\"udgen-type hierarchy for $\cX$ defined in~\eqref{def: poly_set} is defined by
        \begin{align}\label{hierarchy: moment Schmudgen}
        & \mlb(f,\CT(\CX))_r = \inf \Big\{ \ell_{\biy}(f) = \sum_{\alpha \in \NN^n_{2r}}f_{\alpha}y_{\alpha} \,:\, \biy \in \pMTX2r\Big\}
        \\
        \text{where \quad} & \pMTX2r := \Big\{ 
        \biy\in 
        \RR^{s(n,2r)}\,:\,\ y_0 =1,\ \bM_r(\biy) \succeq 0,\nonumber\\ &\hspace{20mm}\ell_{\biy}(\bx^\alpha h_i)=0\; (|\alpha|+\deg h_i\leq2r)\;\forall i\in[p],\nonumber\\ 
        &\hspace{30mm}  \bM_{r-\lceil g_J \rceil}(g_J\biy) \succeq 0\;\;
        \forall\; J \subset [m] \;\mbox{such that} \; \lceil g_J \rceil\leq r \Big\}
        \nonumber.
\end{align}
        \item The primal formulation for the Putinar-type hierarchy for $\cX$ defined in~\eqref{def: poly_set} is defined by
        \begin{align}\label{hierarchy: moment Putinar}
        & \mlb(f,\CQ(\CX))_r = \inf\Big\{ \ell_{\biy}(f) = \sum_{\alpha \in \NN^n_{2r}}f_{\alpha}y_{\alpha}\,:\, \biy \in \pMQX2r \Big\}
        \\
        \text{where \quad}& \pMQX2r :=\Big\{ \biy \in \RR^{s(n,2r)}\,:\, y_0 =1,\ \bM_r(\biy) \succeq 0,\nonumber \\
         &\hspace{50mm}  \bM_{r-\lceil g_i \rceil}(g_i\biy) \succeq 0 \ \forall\; i \in [m],\nonumber\\ &\hspace{35mm}\ell_{\biy}(\bx^\alpha h_i)=0\; (|\alpha|+\deg h_i\leq2r)\;\forall i\in[p]\Big\}.
        \nonumber
        \end{align}
        \item The primal formulation for the extended-Handelman-type hierarchy for $\cX$ defined in~\eqref{def: poly_set}, where the certificate of positivity $\CR(\cX)$ is defined as in~\eqref{def: extended preprime}, is defined by
        \begin{align}\label{hierarchy: moment Handelman}
        & \mlb(f,\CR(\CX))_r = \inf\Big\{ \ell_{\biy}(f) = \sum_{\alpha \in \NN^n_{r}}f_{\alpha}y_{\alpha}\,:\, \biy \in \pMRX2r \Big\}
        \\
        \text{where \quad}& \pMRX2r :=\Big\{ \biy \in \RR^{s(n,r)}\,:\, y_0 =1,\ \ell_{\biy}(\bx^{\gamma_i} h_i) =0\ \forall \gamma_i \in \NN^n_{r-\deg(h_i)},\; i \in [p]  \nonumber \\
         & \ell_{\biy}(\bg^{\alpha}\bx^\beta(1-\bx)^\gamma) \geq 0,\forall \alpha \in \NN^m,\ \beta, \gamma\in \NN^n,\; \deg(\bg^{\alpha})+|\beta|+|\gamma|\leq r\Big\}.
        \nonumber
        \end{align}
        \item The primal formulation for the Hol-Scherer-type hierarchy for $\cX$ defined in~\eqref{def: matrix_set} is defined by
        \begin{align}\label{hierarchy: moment Hol-Scherer}
        & \mlb(f,\CQ(\CX))_r = \inf\Big\{ \ell_{\biy}(f) = \sum_{\alpha \in \NN^n_{2r}}f_{\alpha}y_{\alpha}\,:\, \biy \in \pMQX2r \Big\}
        \\
        \text{where \quad}& \pMQX2r :=\Big\{ \biy \in \RR^{s(n,2r)}\,:\, y_0 =1,\ \bM_r(\biy) \succeq 0,\; \bM_{r-\lceil G \rceil}(G\biy) \succeq 0\Big\},
        \nonumber
        \end{align}
        where $\degg{G}$ denotes $\max_{i,j \in [m]}\degg{g_{ij}}$, and recall that $G(\bx) = (g_{ij})_{m \times m}$ as defined in~\eqref{def: matrix_set}. 
    \end{enumerate}

    \subsection{Hausdorff distances}\label{sec: Hausdorff distance}
    Consider the problem~\eqref{POP}, where the feasible set $\cX$ is defined either in~\eqref{def: poly_set} or~\eqref{def: matrix_set}. In this paper, we denote the degree of $f$ by $k$. Then the problem~\eqref{POP} admits a reformulation in terms of $k$--truncated moment sequences:
    \begin{equation*}
     \fmin = \inf\left\{\int_{\CX}fd\mu: \mu \in \CP(\CX) \right\}=\inf\left\{\sum_{\alpha \in \NN^n_k} f_{\alpha}y_{\alpha}=\langle \vc(f),\biy \rangle:\ \biy \in \CM_k(\CX) \right\},
\end{equation*}
where $\CM_k(\cX)$ denotes the set of $k$--truncated moment sequences on $\cX$. However, verifying membership of $\CM_k(\cX)$ is challenging, so we approximate it by the set of truncated pseudo-moment sequences. In particular, the primal formulations~\eqref{hierarchy: moment Schmudgen},~\eqref{hierarchy: moment Putinar},~\eqref{hierarchy: moment Handelman}, and~\eqref{hierarchy: moment Hol-Scherer} admit the following reformulations: 
\begin{eqnarray*}
    \mlb(f,\CT(\CX))_r = \inf\left\{\sum_{\alpha \in \NN^n_k} f_{\alpha}y_{\alpha}=\langle \vc(f),\biy \rangle:\ \biy \in \pMkTX2r \right\},
    \\
    \mlb(f,\CQ(\CX))_r = \inf\left\{\sum_{\alpha \in \NN^n_k} f_{\alpha}y_{\alpha}=\langle \vc(f),\biy \rangle:\ \biy \in \pMkQX2r \right\},
    \\
    \mlb(f,\CR(\CX))_r = \inf\left\{\sum_{\alpha \in \NN^n_k} f_{\alpha}y_{\alpha}=\langle \vc(f),\biy \rangle:\ \biy \in \pMkRX2r \right\}.
\end{eqnarray*}
Here, for $k\leq r$, let $\pi_k^r :\RR^{s(n,r)} \to \RR^{s(n,k)}$ denote the projection onto the first $s(n,k)$ coordinates, then 
\begin{eqnarray*}
\pMkTX2r &=& \{ \pi_k^{2r}(\biy)\in \RR^{s(n,k)} \,:\, \biy\in \pMTX2r \},
\\
\pMkQX2r &=& \{ \pi_k^{2r}(\biy)\in \RR^{s(n,k)} \,:\, \biy\in \pMQX2r \},
\\
\pMkRX2r &=& \{ \pi_k^r(\biy)\in \RR^{s(n,k)} \,:\, \biy\in \pMRX2r \}.
\end{eqnarray*}
Note that when $\cX$ is defined as in~\eqref{def: poly_set} or~\eqref{def: matrix_set}, the truncated quadratic modules are both defined by $\CQ(\cX)$; they will be studied in separate sections to avoid confusion. 
To analyze the tightness of these approximations, we define the following Hausdorff distances:
\begin{eqnarray*}
\quad \bd_k(\CT(\CX)_{2r}) &:=& \bd(\pMkTX2r,\CM_k(\CX)) =\sup\{\bd(\biy,\CM_k(\CX)):\ \biy \in \pMkTX2r\},\\
\quad \bd_k(\CQ(\CX)_{2r}) &:=& \bd(\pMkQX2r,\CM_k(\CX))=\sup\{\bd(\biy,\CM_k(\CX)):\ \biy \in \pMkQX2r\},\\
\quad \bd_k(\CR(\CX)_{r}) &:=& \bd(\pMkRX2r,\CM_k(\CX))=\sup\{\bd(\biy,\CM_k(\CX)):\ \biy \in \pMkRX2r\}.
\end{eqnarray*}
Under the coordinate-generator assumptions used below, the pseudo-moment sets are compact, so these suprema are attained. We use these Hausdorff distances to bound the gap between the lower bounds given by the primal formulations and the global optimal value $\fmin$ as presented in the following lemma. 
\begin{lemma}\label{lemma: distance to convergence rate}
    Let $\CX$ be a compact semi-algebraic set and $k \geq {\rm deg}(f)$. Assume $k\leq2r$ for the SOS hierarchies and $k\leq r$ for the preprime hierarchy. Then the gaps of the optimal value $f_{\min}$ and the optimal value $f_r$ of the $r$-level of the moment hierarchies are bounded proportionally to the Hausdorff distances as follows:
    
    \begin{eqnarray*}
        \qquad \fmin - \mlb(f,\CT(\CX))_r &\leq& \monc{f}\,\bd_k(\CT(\CX)_{2r}),\\
        \qquad \fmin - \mlb(f,\CQ(\CX))_r &\leq&\monc{f}\,\bd_k(\CQ(\CX)_{2r}),\\
        \qquad \fmin - \mlb(f,\CR(\CX))_r &\leq&\monc{f}\,\bd_k(\CR(\CX)_{r}).
    \end{eqnarray*}
\end{lemma}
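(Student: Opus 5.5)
The argument is the same for all three hierarchies, so I treat a generic certificate $\cS(\cX)$ with pseudo-moment set $\mathcal{P\!M}_k(\cS(\cX))$. Since every sequence in $\mathcal{P\!M}_k(\cS(\cX))$ is the projection $\pi_k$ of a full sequence, and $\deg f\leq k$, the value $\ell_{\biy}(f)$ depends only on the first $s(n,k)$ coordinates. Hence
\[
\mlb(f,\cS(\cX))_r=\inf\{\langle\vc(f),\biy\rangle:\biy\in\mathcal{P\!M}_k(\cS(\cX))\}.
\]
If this set is empty, the infimum is $+\infty$ and there is nothing to prove, so assume it is nonempty. The plan is to bound $\langle \vc(f),\biy\rangle$ from below by $\fmin$ minus a distance term, uniformly over this set.

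Fix $\biy\in\mathcal{P\!M}_k(\cS(\cX))$. The set $\CM_k(\cX)$ is nonempty, convex and compact: it is the convex hull of the compact curve $\{\bv_k(\bx):\bx\in\cX\}$, as the Tchakaloff representation recalled above shows. So there is a nearest point $\biz\in\CM_k(\cX)$ with $\|\biy-\biz\|=\bd(\biy,\CM_k(\cX))$, and $\biz$ is represented by a probability measure $\mu$ on $\cX$. This gives
\[
\langle\vc(f),\biz\rangle=\int_\cX f\,d\mu\geq\fmin .
\]
By Cauchy--Schwarz, with $\monc{f}=\|\vc(f)\|$,
\[
\langle\vc(f),\biy\rangle\geq\langle\vc(f),\biz\rangle-\monc{f}\,\|\biy-\biz\|\geq\fmin-\monc{f}\,\bd(\biy,\CM_k(\cX)).
\]
Next take the supremum of the distance over all admissible $\biy$, which gives $\bd_k(\cS(\cX))$, and then the infimum of the left-hand side. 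This yields $\mlb(f,\cS(\cX))_r\geq\fmin-\monc{f}\,\bd_k(\cS(\cX))$, which rearranges to each of the three stated inequalities.

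The only point needing care is the degree bookkeeping. The hypothesis $k\leq 2r$ for the SOS hierarchies and $k\leq r$ for the preprime hierarchy makes $\pi_k^{2r}$ and $\pi_k^r$ well defined, and $k\geq\deg f$ ensures that $\vc(f)$ lives in $\RR^{s(n,k)}$. If $\bd_k=+\infty$, the bound is trivial.

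Existence of the nearest point is not actually required: an $\epsilon$-approximate nearest point suffices, with $\epsilon\to0$. So no step is a genuine obstacle; the argument is a direct Cauchy--Schwarz estimate.
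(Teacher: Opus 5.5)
Your proposal is correct and is the same Cauchy--Schwarz argument the paper invokes (it defers the details to Lemma~2.4 of the earlier work). For each pseudo-moment sequence you compare $\ell_{\biy}(f)$ with its value at a nearest true moment sequence, bound the difference by $\monc{f}$ times the distance, and then take the supremum of the distance and the infimum over $\biy$.
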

\begin{proof}
    This lemma is a direct consequence of the Cauchy--Schwarz inequality; we refer to~\cite[Lemma~2.4]{tran2025convergence} for details.
\end{proof}

\subsection{{\L}ojasiewicz inequality}
The {\L}ojasiewicz inequality plays an important role in our method. In this section, we present two versions of the {\L}ojasiewicz inequality associated to different descriptions of $\cX$ as in~\eqref{def: poly_set} and~\eqref{def: matrix_set}. 
\begin{lemma}[\cite{Real_algebraic_geometry},Corollary 2.6.7]\label{lemma: Lojasiewicz inequality}
     Let $B$ be a compact semi-algebraic set, and $f$ and $g$ be two continuous 
     semi-algebraic functions from $B$ to $\RR$ such that $f^{-1}(0) \subset g^{-1}(0)$. Then there exist a {\L}ojasiewicz constant $c > 0 $ and a {\L}ojasiewicz exponent $1 \geq \Lo > 0$ such that 
     \begin{displaymath}
         |g(\bx)| \leq c|f(\bx)|^{\Lo} \quad \forall\; \bx \in B.
     \end{displaymath}
 \end{lemma}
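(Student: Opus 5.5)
The approach is to reduce the statement to a one-variable problem via a semi-algebraic ``worst-case'' function, and then use the Puiseux-type behaviour of one-variable semi-algebraic functions near $0$. Define, for $t\geq 0$,
\begin{displaymath}
  \varphi(t) := \max\{\,|g(\bx)| \;:\; \bx\in B,\ |f(\bx)|\leq t\,\},
\end{displaymath}
with the convention $\varphi(t)=0$ if the set is empty. Since $B$ is compact and $f,g$ are continuous, the maximum is attained. By the Tarski--Seidenberg principle, $\varphi$ is a semi-algebraic function on $[0,\infty)$, being defined by a first-order formula in $f$, $g$ and $B$. By construction $\varphi$ is nondecreasing, and $|g(\bx)|\leq\varphi(|f(\bx)|)$ for every $\bx\in B$. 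It therefore suffices to show $\varphi(t)\leq c\,t^{\Lo}$ for all $t\in[0,T]$, where $T:=\supnorm{f}{B}$.

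The first key step is to prove $\varphi(0)=0$ and $\varphi(t)\to0$ as $t\downarrow0$. The first claim is immediate from $f^{-1}(0)\subset g^{-1}(0)$. For continuity at $0$, argue by contradiction. If $\varphi(t_k)\geq\delta>0$ along some $t_k\downarrow0$, pick $\bx_k\in B$ with $|f(\bx_k)|\leq t_k$ and $|g(\bx_k)|\geq\delta$. Extract a convergent subsequence using compactness of $B$. Its limit $\bx^*$ satisfies $f(\bx^*)=0$ but $|g(\bx^*)|\geq\delta$, which contradicts the inclusion of zero sets.

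The second step, which I expect to be the main technical point, is the growth estimate near $0$. A semi-algebraic function of one variable is, on some interval $(0,\epsilon)$, continuous and given by a convergent Puiseux series. So either $\varphi\equiv0$ on $(0,\epsilon)$, or $\varphi(t)=a\,t^{q}+o(t^{q})$ with $a>0$ and $q\in\mathbb{Q}$. By the first step $\varphi(t)\to0$, which forces $q>0$. Hence $\varphi(t)\leq 2a\,t^{q}$ on $(0,\epsilon')$ for some $\epsilon'\in(0,\epsilon]$. I would invoke this Puiseux/monotonicity result for univariate semi-algebraic germs as a standard fact from real algebraic geometry, rather than reprove it.

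Finally, globalize and normalize the exponent. Set $\Lo:=\min\{q,1\}\in(0,1]$, or $\Lo:=1$ if $\varphi$ vanishes near $0$. For $t\in(0,\epsilon')$ with $\epsilon'\leq1$, we have $t^{q}\leq t^{\Lo}$, so $\varphi(t)\leq 2a\,t^{\Lo}$. For $t\in[\epsilon',T]$, monotonicity gives $\varphi(t)\leq\supnorm{g}{B}\leq \supnorm{g}{B}\,(\epsilon')^{-\Lo}t^{\Lo}$. Taking $c:=\max\{2a,\ \supnorm{g}{B}(\epsilon')^{-\Lo}\}$ (and $c>0$ in any case) gives $|g(\bx)|\leq\varphi(|f(\bx)|)\leq c\,|f(\bx)|^{\Lo}$ for all $\bx\in B$, as claimed.
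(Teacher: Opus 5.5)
Your proof is correct. The paper gives no proof of its own here: the lemma is quoted from Bochnak--Coste--Roy, so the natural comparison is with the textbook argument.

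That argument has the same architecture as yours:
\begin{enumerate}
\item Tarski--Seidenberg turns the pair $(f,g)$ into an auxiliary semialgebraic function of one variable.
\item Compactness of $B$ together with $f^{-1}(0)\subset g^{-1}(0)$ controls this function near $0$.
\item A one-variable growth estimate gives the power bound near $0$, and a compactness bound handles the region away from $0$.
\end{enumerate}

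The genuine difference is the one-variable input. The textbook uses a polynomial growth bound for one-variable semialgebraic functions. This produces an integer $N$ with $|g|^N\le c|f|$, i.e.\ $\Lo=1/N$. You instead invoke the Puiseux expansion of semialgebraic germs at $0^+$, which is somewhat heavier machinery.

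In exchange, your route gives more. Each value $\varphi(t)\sim a\,t^{q}$ is attained at some point $\bx_t\in B$ with $|f(\bx_t)|\le t$. So any admissible exponent $\Lo'$ would force $(a/2)\,t^{q}\le c\,t^{\Lo'}$ for small $t$, and hence $\Lo'\le q$. Your $\min\{q,1\}$ is therefore the optimal exponent in $(0,1]$, not merely some admissible one.

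For the paper's purposes only the existence of some $\Lo\in(0,1]$ and some $c>0$ matters, and either version supplies that.
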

 In the scalar setting, where $\cX$ is defined in~\eqref{def: poly_set}, we define the violating function
 \begin{displaymath}
     \bp(\bx) := \max\{0,\;-g_j(\bx),\;|h_i(\bx)|\ :\ j\in [m],\; i \in [p] \}, 
 \end{displaymath}
 which can be seen as a function to define $\cX$ as $\cX = \bp^{-1}(0):=\{\bx \in \RR^n:\ \bp(\bx) =0\}$. Since $\bp$ and $\bd(\bx,\cX)$ are two continuous semi-algebraic functions (see e.g.,~\cite{Real_algebraic_geometry}), for any compact semi-algebraic set $B$ containing $\cX$, there exists the {\L}ojasiewicz constant $c_{\Lo}$ and the {\L}ojasiewicz exponent $\Lo$ such that 
 \begin{equation}\label{eq: poly_Lojasiewicz inequality}
     \bd(\bx,\cX) \leq c_{\Lo} \cdot \bp(\bx)^{\Lo}\quad \forall \ \bx \in B.
 \end{equation}

 In the matrix setting, where $\cX$ is defined as in~\eqref{def: matrix_set}, another version of the {\L}ojasiewicz inequality was studied in~\cite{dinh2016lojasiewicz}, which is presented in the following lemma.

  \begin{lemma}\cite[Theorem 4.1]{dinh2016lojasiewicz}
 \label{lem: matrix Lojasiewicz}
For any compact set $B$ containing $\cX$ as defined  in \eqref{def: matrix_set}, there exist a {\L}ojasiewicz constant $c_{\Lo}>0$ and a {\L}ojasiewicz exponent $0 < \Lo \leq 1$ depending on $B$ and the defining matrix $G$ such that
\begin{displaymath}
    \bd(\bx,\mathcal{X})\leq c_{\Lo}\cdot \max\{0,-\lambda_m(\bx)\}^{\Lo} \quad \forall\  \bx \in B,
\end{displaymath}
where $\lambda_1(\bx) \geq \ldots \geq \lambda_m(\bx)$ denote the eigenvalue functions of $G(\bx)$ in the non-increasing order.
\end{lemma}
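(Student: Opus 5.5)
The plan is to derive the statement directly from Lemma~\ref{lemma: Lojasiewicz inequality}, applied to the pair
\[
 f(\bx):=\max\{0,-\lambda_m(\bx)\},\qquad g(\bx):=\bd(\bx,\cX),
\]
on a suitable compact semialgebraic set. The only work is to check that the hypotheses of that lemma hold.

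\emph{Step 1 (reduction to a semialgebraic domain).} The set $B$ in the statement is only assumed compact, while Lemma~\ref{lemma: Lojasiewicz inequality} requires a compact semialgebraic set. I will replace $B$ by a closed ball $\mathbb{B}^n_R\supseteq B$, which is compact and semialgebraic. An inequality proved on $\mathbb{B}^n_R$ restricts to $B$, with constants then depending only on $R$ (hence on $B$) and on $G$.

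\emph{Step 2 (continuity and semialgebraicity).} By the Courant--Fischer characterization,
\[
 \lambda_m(\bx)=\min_{\|\bv\|=1}\bv^\top G(\bx)\bv .
\]
This gives two facts.
\begin{itemize}
\item Continuity: $|\lambda_m(\bx)-\lambda_m(\bx')|\leq\|G(\bx)-G(\bx')\|_{\op}$ (Weyl's inequality), and the entries of $G$ are polynomials.
\item Semialgebraicity: the graph $\{(\bx,t): t=\lambda_m(\bx)\}$ is described by the first-order formula ``$\bv^\top G(\bx)\bv\geq t$ for all unit $\bv$, and equality holds for some unit $\bv$''. By Tarski--Seidenberg quantifier elimination this graph is semialgebraic.
\end{itemize}
Taking a maximum with $0$ preserves both properties, so $f$ is continuous and semialgebraic.

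For $g$: the set $\cX=\{\bx:\lambda_m(\bx)\geq0\}$ is semialgebraic, and it is closed by continuity of $\lambda_m$. The distance function to a nonempty semialgebraic set is $1$-Lipschitz and semialgebraic (again by quantifier elimination, as in~\cite{Real_algebraic_geometry}).

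\emph{Step 3 (inclusion of zero sets).} We have $f(\bx)=0$ if and only if $\lambda_m(\bx)\geq0$, that is, if and only if $G(\bx)\succeq0$, that is, if and only if $\bx\in\cX$. Because $\cX$ is closed, this happens exactly when $\bd(\bx,\cX)=0$. Hence $f^{-1}(0)=g^{-1}(0)$, and in particular $f^{-1}(0)\subset g^{-1}(0)$ on $\mathbb{B}^n_R$.

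\emph{Step 4 (conclusion).} Lemma~\ref{lemma: Lojasiewicz inequality} now yields $c_{\Lo}>0$ and $0<\Lo\leq1$ with $\bd(\bx,\cX)\leq c_{\Lo}\,f(\bx)^{\Lo}$ on $\mathbb{B}^n_R\supseteq B$. Here $|f|=f$ since $f\geq0$.

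The only point needing care is Step~2, namely the semialgebraicity of the non-polynomial function $\lambda_m$. I will handle it through the explicit min-formula and quantifier elimination as above.
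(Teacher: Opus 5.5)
Your argument is correct, but it takes a different route from the paper. The paper gives no proof of this lemma. It simply imports Theorem~4.1 of \cite{dinh2016lojasiewicz}, which is a quantitative statement: it provides the error bound with an exponent given explicitly in terms of the data ($n$, $m$, $\deg G$). You instead obtain the qualitative version directly from the general semialgebraic {\L}ojasiewicz inequality (Lemma~\ref{lemma: Lojasiewicz inequality}) by verifying its hypotheses. Continuity of $\lambda_m$ comes from Weyl's inequality, semialgebraicity from the Rayleigh-quotient (Courant--Fischer) formula and Tarski--Seidenberg, and $f^{-1}(0)=\cX=g^{-1}(0)$ because $\cX$ is closed. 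Enlarging $B$ to a ball also handles a point the statement glosses over: it allows an arbitrary compact $B$, while Lemma~\ref{lemma: Lojasiewicz inequality} needs a semialgebraic domain.

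Your route is self-contained and uses only tools already in the paper. It is essentially the reasoning the paper itself invokes informally in Section~\ref{sec:matrix}, just before Theorem~\ref{thm:matrix}. Its cost is that it only yields the existence of some $\Lo\in(0,1]$ with no control on its size, whereas the citation gives a computable exponent. Since none of the paper's rates depend on a specific value of $\Lo$, your weaker conclusion is all that is used downstream.

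Two small remarks. First, you should state explicitly that $\cX\neq\emptyset$ (as in Assumption~\ref{assume: matrix cube}); otherwise $\bd(\cdot,\cX)$ is not real-valued and the inequality fails. Second, the hypothesis $\cX\subseteq B$ plays no role in your argument.
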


Throughout the paper, the exponent $\Lo$ is used in the form of~\eqref{eq: poly_Lojasiewicz inequality}. By decreasing it on a compact set if necessary, we take $0<\Lo\leq1$. The scalar and spectral violation functions satisfy the hypotheses of the common atom estimate~\eqref{eq: atom transfer}, proved in Stage 2 of Section~\ref{sec: method}. We use~\eqref{eq: universal moment transfer} after estimating the moment displacement and the average squared violation in each hierarchy.

\section{Putinar-type moment-SOS hierarchy}\label{sec: Put}
In this section, we aim to explore the convergence rate of the Putinar-type moment-SOS hierarchy, namely the hierarchy~\eqref{hierarchy: poly-quadratic}, for the problem~\eqref{POP} with $\cX$ defined as in~\eqref{def: poly_set}. We make the following Archimedean assumption, which is standard in studying convergence rate of the Putinar-type hierarchy. 
\begin{assume}\label{assume: hypercube containment}
    Assume that $\cX$ is nonempty and contained in the hypercube $\mathrm{H}^n$, and the description of $\cX$ is given by 
    \begin{displaymath}        
    \cX := \{ \bx \in \RR^n:\; x_i(1-x_i) \geq 0 \; \forall i \in [n],\; g_j(\bx) \geq 0 \; \forall j \in [m],\; h_i(\bx) =0\; \forall i \in [p]\}. 
    \end{displaymath}
    Then the Archimedean condition is satisfied for $\cX$. 
\end{assume}
We next follow the two stages described in Section~\ref{sec: method}.
\subsection{Stage 1: Base estimation} We collect the results from existing works to prove the strong duality, compactness, and the upper bound on the Hausdorff distance $\bd_k(\CQ(\cX)_{2r})$.

We first establish compactness and strong duality. The coordinate certificates used below also replace the general ball-radius estimate by a sharp bound for the hypercube.
For every multi-index $\alpha$,
\begin{equation}\label{eq: coordinate certificate}
 1\pm\bx^\alpha\in\CQ(\mathrm{H}^n)_{2\lceil|\alpha|/2\rceil}.
\end{equation}
Indeed, $1-x_i^2=(1-x_i)^2+2x_i(1-x_i)\in\CQ(\mathrm{H}^n)_2$. For any $\beta\in\NN^n$, the identity
\[
 1-\bx^{2\beta}=\sum_{i=1}^n\left(\prod_{j<i}x_j^{2\beta_j}\right)
       \left(\sum_{a=0}^{\beta_i-1}x_i^{2a}\right)(1-x_i^2)
\]
gives a certificate of degree at most $2|\beta|$, where an empty sum is zero. Choose $\beta,\gamma\in\NN^n$ with $\alpha=\beta+\gamma$ and $|\beta|,|\gamma|\leq\lceil|\alpha|/2\rceil$. Then
\[
 1\pm\bx^\alpha=\tfrac12\big((\bx^\beta\pm\bx^\gamma)^2
                    +(1-\bx^{2\beta})+(1-\bx^{2\gamma})\big),
\]
which proves~\eqref{eq: coordinate certificate}. Consequently, for any $\biy\in\CP\CM(\CQ(\mathrm{H}^n)_{2r})$,
\begin{equation}\label{eq: sharp radius}
 |\iy_\alpha|\leq1\quad(|\alpha|\leq2r),\qquad
 \|\pi_k^{2r}(\biy)\|\leq\sqrt{s(n,k)}\quad(k\leq2r).
\end{equation}
The radius is attained by the moment sequence of the Dirac measure at $(1,\ldots,1)$. Since the full pseudo-moment set is closed and bounded, it is compact; its coordinate projections are compact as well. The same conclusion holds when further scalar or matrix constraints are imposed.

The certificates~\eqref{eq: coordinate certificate} also show that $1$ is an interior point of the truncated cone in the space of polynomials of degree at most $2r$. To see this, any polynomial $p$ satisfies
\begin{equation}\label{eq: coefficient certificate}
 \mon{p}\pm p\in\CQ(\mathrm{H}^n)_{2\lceil\deg(p)/2\rceil}.
\end{equation}
Thus a coefficient neighborhood of $1$ is contained in the cone. Finite-dimensional conic duality then gives
$\lb(f,\CQ(\cX))_r=\mlb(f,\CQ(\cX))_r$ whenever $2r\geq\deg f$ and the relevant constraints are included. More explicitly, separation identifies the infimum over normalized dual functionals with the supremum of $t$ for which $f-t$ belongs to the closure of the cone. Subtracting any $\epsilon>0$ from such a bound adds $\epsilon\cdot1$, an interior element, and gives membership in the cone itself. Hence taking its closure does not change the supremum. This argument also applies to the matrix quadratic module containing the constraints $x_i(1-x_i)$.

\begin{theorem}[\cite{gribling2026squared}, Theorem 7]\label{thm: rate on box}
Consider a polynomial $f$ of degree at most $d\geq1$, and let $f_{\min}=\min_{\bx\in[-1,1]^n}f(\bx)$. There are absolute positive constants $c_1\leq70458$ and $c_2\leq540$ such that, for every integer $r\geq\max\{2,d\}$ satisfying $n\mid r$ and $r/\log_2r\geq c_2n^2d^2$,
\[
 f_{\min}-\lb(f,\CQ([-1,1]^n))_r
 \leq c_1n^3d^2\cheb{f}\frac{\log_2^3r}{r^2}.
\]
\end{theorem}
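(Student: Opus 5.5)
We would follow the polynomial-kernel method for simple sets (Christoffel--Darboux / Jackson type). Write $f-f_{\min}+\epsilon$ as the image of a nonnegative function under a linear kernel operator that maps nonnegative functions on $[-1,1]^n$ into $\CQ([-1,1]^n)_{2r}$. The operator should act almost as the identity on polynomials of degree at most $d$. Concretely, we look for a kernel $K_r(\bx,\by)$ with the following properties.
\begin{enumerate}
\item For every fixed $\by\in[-1,1]^n$, the polynomial $\bx\mapsto K_r(\bx,\by)$ lies in $\CQ([-1,1]^n)_{2r}$.
\item $K_r$ is diagonal in the tensor Chebyshev basis with respect to the product Chebyshev measure $\mu$: $\int K_r(\bx,\by)T_\alpha(\by)\,d\mu(\by)=\lambda_\alpha T_\alpha(\bx)$ and $\lambda_0=1$.
\item $|1-\lambda_\alpha|\leq C\,n^3d^2\log_2^3 r/r^2$ for all $|\alpha|\leq d$.
\end{enumerate}

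Given such a kernel, the argument is routine. Put $\mathbf K p(\bx)=\int K_r(\bx,\by)p(\by)\,d\mu(\by)$ and $p=\mathbf K^{-1}(f-f_{\min})+\epsilon$. Then
\[
p-(f-f_{\min})-\epsilon=\sum_\alpha(\lambda_\alpha^{-1}-1)f_\alpha T_\alpha .
\]
Since $|T_\alpha|\leq1$ on the cube, this difference is bounded in sup norm by $\max_{|\alpha|\leq d}|\lambda_\alpha^{-1}-1|\,\cheb{f}$. Choosing $\epsilon$ equal to this quantity makes $p\geq0$ on $[-1,1]^n$. Because $\mathbf K$ fixes constants, $\mathbf K p=f-f_{\min}+\epsilon$. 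By Tchakaloff's theorem, the integral defining $\mathbf K p$ is a finite nonnegative combination of the polynomials $K_r(\cdot,\by_s)$, so $f-f_{\min}+\epsilon\in\CQ([-1,1]^n)_{2r}$. The hypothesis $r/\log_2 r\geq c_2n^2d^2$ makes $|1-\lambda_\alpha|\leq1/2$, so $|\lambda_\alpha^{-1}-1|\leq2|1-\lambda_\alpha|$. This gives the stated bound with an absolute constant $c_1$.

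For the construction we would use a \emph{squared} kernel, so that membership in the cone is automatic. Substitute $x_i=\cos\theta_i$, take a univariate trigonometric polynomial $g$ of degree $\approx r/(2n)$, and set $K_r(\bx,\by)=P(\bx,\by)^2$ for a product of univariate pieces. Then $\bx\mapsto K_r(\bx,\by)$ is a sum of squares of degree at most $2r$, hence in $\CQ([-1,1]^n)_{2r}$ without using any localizing multipliers. This avoids the obstacle that products of univariate quadratic-module certificates only land in the preordering. We expect the condition $n\mid r$ to enter here: each coordinate receives the same degree $r/n$. Then each univariate factor has eigenvalue defect $\mathrm{O}(k^2\log_2^3 r\,n^2/r^2)$ on Chebyshev index $k$. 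Summing the defects over at most $n$ coordinates, each with $k\leq d$, gives the factor $n^3d^2$.

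The main obstacle is step (3) together with (2). After squaring, the kernel is no longer automatically diagonal in the Chebyshev basis. Its eigenvalue defects must be controlled with only a polylogarithmic loss relative to the $1/r^2$ Jackson rate. We would first symmetrize over the reflections $\theta_i\mapsto\pm\theta_i$ (averaging $P(\cos\theta,\cos(\phi\pm\psi))^2$), which restores a convolution structure on the torus and hence diagonality. It then remains to choose $g$, a Jackson-type kernel truncated at a logarithmically enlarged scale, with two properties. Its square must still be a probability density, and it must have second moment $\int(1-\cos\psi)\,g(\psi)^2\,d\psi=\mathrm{O}(\log_2^3 r/r^2)$. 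The estimate $1-\lambda_k\leq k^2\int(1-\cos\psi)g^2$ then gives the required defects. Balancing the decay of the kernel against normalization after squaring is the delicate computation, and it is where we expect the $\log_2^3 r$ factor to arise.
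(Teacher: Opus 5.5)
\textbf{There is a genuine gap.} Properties (1)--(3) cannot hold together for the kernel you propose, and your repair for (2) brings back exactly the obstacle you wanted to avoid.

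\emph{A pure-square kernel cannot be exactly diagonal.} If $\bx\mapsto K_r(\bx,\by)$ is a square (or any SOS) for every $\by\in[-1,1]^n$, then $\mathbf K p\geq0$ on all of $\RR^n$, not only on the cube, whenever $p\geq0$ on $[-1,1]^n$. Take $p=1\mp y_1$. Exact diagonality with $\lambda_0=1$ gives $1\mp\lambda_{e_1}x_1\geq0$ for every real $x_1$, hence $\lambda_{e_1}=0$, which contradicts (3). Similarly, $p=1\mp T_2(y_1)$ forces $\lambda_{2e_1}=0$. Concretely, for $f=1-x_1^2$ your identity $\mathbf Kp=f-f_{\min}+\epsilon$ would exhibit $1+\epsilon-x_1^2$ as a sum of squares.

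\emph{The reflection averaging restores diagonality only by destroying squareness.} With $x=\cos\theta$, $y=\cos\phi$ and $g$ even, $g(\theta\mp\phi)=A(x,y)\pm\sqrt{1-x^2}\sqrt{1-y^2}\,B(x,y)$. Hence $\tfrac12\bigl[g(\theta-\phi)^2+g(\theta+\phi)^2\bigr]=A^2+(1-x^2)(1-y^2)B^2$. This is the classical Jackson-type kernel with one localizer per coordinate. Its tensor product over the coordinates contains terms $\prod_{i\in S}(1-x_i^2)\,(\cdot)^2$ with $|S|\geq2$, so it lies in the preordering. You would therefore recover a Schm\"udgen-type bound, not a bound for $\CQ([-1,1]^n)_{2r}$. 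Your ``routine'' step $p=\mathbf K^{-1}(f-f_{\min})+\epsilon$ also relies on exact diagonality and $\lambda_0=1$, so it fails as well.

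\textbf{The missing idea} is visible in how the paper imports the cited kernel (Lemma~\ref{lem: matrix kernel} and the proof of Proposition~\ref{prop: matrix base estimate}). Keep the pure-square kernel and give up both diagonality and exact normalization. Prove only that $E_p:=\mathbf Kp-p$ satisfies $\cheb{E_p}\leq\varepsilon_r\cheb{p}$ for $\deg p\leq d$, and then absorb this error inside the quadratic module:
\begin{itemize}
\item With $p=f-f_{\min}\geq0$ on the cube, $f-f_{\min}+\cheb{E_p}=\mathbf Kp+\bigl(\cheb{E_p}-E_p\bigr)$.
\item Here $\mathbf Kp$ is SOS of degree at most $2r$.
\item The certificates for $1\pm T_\alpha$ in the proof of Lemma~\ref{lem: matrix functional norm} give $\cheb{q}-q\in\CQ([-1,1]^n)_{2r}$ for $\deg q\leq2r$. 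Each of them uses only a single localizer $1-x_j^2$ times a square, so no preordering products appear.
\item Since $|f_{\min}|\leq\cheb{f}$, we have $\cheb{p}\leq2\cheb{f}$. This is where the factor $\cheb f$ in the statement comes from.
\end{itemize}
The real work lies elsewhere: one must construct a non-diagonal squared kernel of degree $2r$ whose operator is $\cheb{\cdot}$-close to the identity on $\RR[\bx]_d$, with error only $\log_2^3 r/r^2$. That is the content of the kernel theorem in the cited work. Your proposal leaves this as an unspecified ``delicate computation'', and carries it out for a kernel whose required properties are mutually incompatible.
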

Here $\CQ([-1,1]^n)$ is generated by $1-x_i^2$, $i\in[n]$. We next use an affine polynomial transformation to obtain a version of Theorem~\ref{thm: rate on box} for the hypercube $\mathrm{H}^n=[0,1]^n$.
\begin{theorem}\label{thm: rate on hypercube}
Under the same conditions on $n,d,r$, let $f_{\min}=\min_{\bx\in\mathrm{H}^n}f(\bx)$ and $\mathbf A(\bx)=(\bx+\mathbf1)/2$. Then
\begin{align*}
 f_{\min}-\lb(f,\CQ(\mathrm{H}^n))_r
 &\leq c_1n^3d^2\cheb{f\circ\mathbf A}\frac{\log_2^3r}{r^2}\\
 &\leq c_1n^3d^2\sqrt{s(n,d)}\monc{f}\frac{\log_2^3r}{r^2}.
\end{align*}
\end{theorem}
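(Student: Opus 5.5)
The plan is to transport Theorem~\ref{thm: rate on box} to $\mathrm H^n$ through the affine change of variables $\mathbf A(\bx)=(\bx+\mathbf1)/2$, which maps $[-1,1]^n$ bijectively onto $\mathrm H^n$. First I set $g:=f\circ\mathbf A$. This is a polynomial of degree at most $d$ with $\min_{[-1,1]^n}g=\min_{\mathrm H^n}f=f_{\min}$, so Theorem~\ref{thm: rate on box} applies to $g$ under the same conditions on $n,d,r$. It gives a $t$ with $g-t\in\CQ([-1,1]^n)_{2r}$ and $t\geq f_{\min}-c_1n^3d^2\cheb{g}\log_2^3r/r^2$; more precisely, $t$ can be taken within any $\epsilon>0$ of $\lb(g,\CQ([-1,1]^n))_r$, and I let $\epsilon\to0$ at the end.

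The key step is to show that composition with $\mathbf A^{-1}(\bu)=2\bu-\mathbf1$ maps $\CQ([-1,1]^n)_{2r}$ into $\CQ(\mathrm H^n)_{2r}$. Write a certificate as $g-t=\sigma_0+\sum_i\sigma_i\,(1-x_i^2)$, with $\deg\sigma_0\leq 2r$ and $\deg\sigma_i\leq 2r-2$. Substituting $\bx=2\bu-\mathbf1$ preserves degrees and preserves sums of squares. For the generators, $1-(2u_i-1)^2=4u_i(1-u_i)$, which is a positive multiple of the generator $u_i(1-u_i)$ of $\CQ(\mathrm H^n)$. Hence $f-t\in\CQ(\mathrm H^n)_{2r}$ with the same degree bounds, so $\lb(f,\CQ(\mathrm H^n))_r\geq\lb(g,\CQ([-1,1]^n))_r$. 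This proves the first inequality.

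For the second inequality I need $\cheb{f\circ\mathbf A}\leq\sqrt{s(n,d)}\monc{f}$. This is exactly where I rely on the norm comparisons in Appendix~\ref{app:norms}, and it is the step I expect to be the main obstacle, since the other steps are formal. I would prove it in two parts:
\begin{itemize}
\item[(i)] Show $\cheb{f\circ\mathbf A}\leq\mon{f}$. Each monomial $\bx^\alpha$ becomes $\prod_i((x_i+1)/2)^{\alpha_i}$. Its Chebyshev coefficients are nonnegative, because $x^k$ and $(x+1)/2$ have nonnegative Chebyshev expansions and products of Chebyshev polynomials satisfy $T_aT_b=\tfrac12(T_{a+b}+T_{|a-b|})$. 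Evaluating at $\bx=\mathbf1$, where every $T_\alpha$ equals $1$, shows these nonnegative coefficients sum to $1$. So every monomial has Chebyshev $\ell_1$-norm exactly $1$, and the triangle inequality gives (i).
\item[(ii)] Apply Cauchy--Schwarz over the at most $s(n,d)$ coefficients of $f$, which gives $\mon{f}\leq\sqrt{s(n,d)}\monc{f}$.
\end{itemize}
Combining (i) and (ii) yields the stated bound.
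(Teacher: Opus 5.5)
Your proposal is correct and follows essentially the same route as the paper: pull back a certificate through $\mathbf A$ using $1-(2u_i-1)^2=4u_i(1-u_i)$ (only the one inclusion you prove is needed), apply Theorem~\ref{thm: rate on box} to $f\circ\mathbf A$, and then bound $\cheb{f\circ\mathbf A}\leq\mon{f}\leq\sqrt{s(n,d)}\monc{f}$ as in Appendix~\ref{app:norms}. For the first norm inequality you use the nonnegative Chebyshev expansion of $(x+1)/2$ together with the product rule, whereas the paper first expands $((1+x)/2)^a$ into monomials; this is only a minor variant, and both arguments use the same evaluation at $\mathbf1$ to show that the coefficients sum to one.
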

\begin{proof}
The map $\mathbf A$ sends $[-1,1]^n$ onto $\mathrm{H}^n$ and sends $x_i(1-x_i)$ to $(1-x_i^2)/4$. It therefore identifies the corresponding truncated quadratic modules without changing their degrees. Apply Theorem~\ref{thm: rate on box} to $f\circ\mathbf A$ and use
\[
 \cheb{f\circ\mathbf A}\leq\mon{f}\leq\sqrt{s(n,d)}\monc{f},
\]
proved in Appendix~\ref{app:norms}.
\end{proof}

\begin{theorem}\label{thm: Hausdorff over H^n}
For positive integers $k,n,r$ satisfying $r\geq\max\{2,k\}$, $n\mid r$, and $r/\log_2r\geq c_2n^2k^2$, the Hausdorff distance over the hypercube admits the upper bound
\begin{equation}\label{eq: cube Hausdorff bound}
 \bd_k(\CQ(\mathrm{H}^n)_{2r})
 \leq c_1n^3k^2\sqrt{s(n,k)-1}\frac{\log_2^3r}{r^2}.
\end{equation}
For every $f\in\RR[\bx]_k$, with $f_{\min}=\min_{\bx\in\mathrm H^n}f(\bx)$, the corresponding optimal-value bound is
\[
 0\leq f_{\min}-\lb(f,\CQ(\mathrm H^n))_r
 \leq\monc f\,c_1n^3k^2\sqrt{s(n,k)-1}\frac{\log_2^3r}{r^2}.
\]
In particular, for fixed $n,k$ and $f$, both the Hausdorff distance and the optimal-value gap converge at rate $\mathrm O((\log_2r)^3/r^2)$.
\end{theorem}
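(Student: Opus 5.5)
The plan is to convert the optimal-value bound of Theorem~\ref{thm: rate on hypercube} into a Hausdorff-distance bound by separation, using that $\CM_k(\mathrm H^n)$ is compact and convex. Fix $\biy\in\mathcal{P\!M}_k(\CQ(\mathrm H^n)_{2r})$ with $\biy\notin\CM_k(\mathrm H^n)$, which we may assume since otherwise the distance is zero. Let $\widetilde{\biy}$ be its Euclidean projection onto $\CM_k(\mathrm H^n)$. Put $\bz=\biy-\widetilde{\biy}$ and consider the polynomial $f$ with $\vc(f)=\bz/\|\bz\|$. The projection inequality gives $\langle \vc(f),\biy'\rangle\leq\langle\vc(f),\widetilde{\biy}\rangle$ for all $\biy'\in\CM_k(\mathrm H^n)$, and $\langle\vc(f),\biy\rangle-\langle\vc(f),\widetilde{\biy}\rangle=\|\bz\|$. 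We apply this to $g:=-f$ instead, so that the minimum of $g$ over $\mathrm H^n$ equals $\min_{\CM_k}\ell(g)=-\langle\vc(f),\widetilde{\biy}\rangle$. Then
\[
 \bd(\biy,\CM_k(\mathrm H^n))=\|\bz\|=g_{\min}-\ell_{\biy}(g)\leq g_{\min}-\mlb(g,\CQ(\mathrm H^n))_r .
\]

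The key refinement is that both $\biy$ and $\widetilde{\biy}$ have $y_0=1$, so $\bz$ has zero constant coordinate. Hence $g$ may be taken with zero constant term and $\monc{g}=1$, supported on the $s(n,k)-1$ nonconstant monomials. This gives $\mon{g}\leq\sqrt{s(n,k)-1}$, and therefore $\cheb{g\circ\mathbf A}\leq\mon g\leq\sqrt{s(n,k)-1}$ by the norm comparison of Appendix~\ref{app:norms} invoked in the proof of Theorem~\ref{thm: rate on hypercube}. If $\deg g<k$, we use that $d^2\leq k^2$ in the bound anyway. If $\deg g=0$, then $g=0$, which is impossible since $\monc g=1$.

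Next comes strong duality. Since $2r\geq k\geq\deg g$, the interior-point argument after~\eqref{eq: coefficient certificate} gives $\mlb(g,\CQ(\mathrm H^n))_r=\lb(g,\CQ(\mathrm H^n))_r$. Applying Theorem~\ref{thm: rate on hypercube} with $d=\deg g\leq k$ is legitimate: the conditions $r\geq\max\{2,d\}$ and $r/\log_2 r\geq c_2n^2d^2$ follow from the assumptions with $k$. This yields $\|\bz\|\leq c_1n^3k^2\sqrt{s(n,k)-1}\,\log_2^3r/r^2$, uniformly in $\biy$. Taking the supremum proves~\eqref{eq: cube Hausdorff bound}.

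The optimal-value statement then follows from Lemma~\ref{lemma: distance to convergence rate} together with strong duality. Nonnegativity holds because $\CQ(\mathrm H^n)_{2r}\subset\CP_+(\mathrm H^n)$. The rate claim is immediate. The main subtlety, and the step I expect to need the most care, is the separation step with the sign convention and the constant coordinate. The projection must be taken in the coordinate space in which $\bd$ is measured, and $\bz_0=0$ must be verified to justify the $\sqrt{s(n,k)-1}$ factor rather than $\sqrt{s(n,k)}$.
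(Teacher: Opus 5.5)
Your proposal is correct and is essentially the paper's proof. Both arguments project onto $\CM_k(\mathrm H^n)$ and use the first-order condition to turn the distance into an optimality gap for the auxiliary objective $\langle\widetilde{\biy}-\biy,\bv_k\rangle$; both then use the vanishing constant coordinate with $\cheb{\cdot\circ\mathbf A}\leq\mon{\cdot}$ to get the factor $\sqrt{s(n,k)-1}$. The only differences are cosmetic: you normalize the objective to unit norm up front and argue for every $\biy$ before taking the supremum, whereas the paper takes a maximizer via compactness and divides by the distance at the end.
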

\begin{proof}
By~\eqref{eq: sharp radius}, the set $\CP\CM_k(\CQ(\mathrm{H}^n)_{2r})$ is compact. The set $\CM_k(\mathrm{H}^n)$ is also compact and convex, by its atomic representation. Therefore there are $\overline{\biy}\in\CP\CM_k(\CQ(\mathrm{H}^n)_{2r})$ and its unique projection $\widetilde{\biy}\in\CM_k(\mathrm{H}^n)$ such that
\[
 \bd_k(\CQ(\mathrm{H}^n)_{2r})=\|\widetilde{\biy}-\overline{\biy}\|.
\]
The first-order optimality condition gives
\[
 \langle\widetilde{\biy}-\overline{\biy},\biy-\widetilde{\biy}\rangle\geq0
 \qquad\forall\biy\in\CM_k(\mathrm{H}^n).
\]
Consider the polynomial optimization problem
\begin{equation*}
 \min_{\bx\in\mathrm{H}^n}f(\bx),\qquad
 f(\bx)=\langle\widetilde{\biy}-\overline{\biy},\bv_k(\bx)\rangle.
\end{equation*}
Its minimum is $\ell_{\widetilde{\biy}}(f)$, whereas its moment relaxation is at most $\ell_{\overline{\biy}}(f)$. Consequently,
\[
 \bd_k(\CQ(\mathrm{H}^n)_{2r})^2
 \leq f_{\min}-\mlb(f,\CQ(\mathrm{H}^n))_r
 \leq c_1n^3k^2\cheb{f\circ\mathbf A}\frac{\log_2^3r}{r^2}.
\]
Both sequences have constant component $1$, so $f$ has zero constant coefficient. Appendix~\ref{app:norms} gives
\[
 \cheb{f\circ\mathbf A}\leq\mon{f}
 \leq\sqrt{s(n,k)-1}\|\widetilde{\biy}-\overline{\biy}\|.
\]
Dividing by this distance when it is nonzero proves~\eqref{eq: cube Hausdorff bound}; the zero case is immediate. A constant shift of the auxiliary objective is unnecessary, which also removes the radius factor from the final estimate.

The optimal-value bound follows from Lemma~\ref{lemma: distance to convergence rate} and strong duality. The asymptotic rates hold for all sufficiently large orders by monotonicity, using the largest multiple of $n$ not exceeding $r$.
\end{proof}

\subsection{Stage 2: Lift and projection}\label{sec: Put lift and project}
We construct a lift of $\cX$ and of a full feasible extension of each sequence in $\pMkQX2r$. Recall that $\cX$ is defined by 
\begin{displaymath}
    \cX = \{\bx \in \RR^n\;:\; x_i(1-x_i) \geq 0 \; \forall i \in [n],\; g_j(\bx) \geq 0 \ \forall \ j \in [m],\; h_i(\bx) = 0 \ \forall \ i \in [p] \}. 
\end{displaymath}
The lifted set $\cK$ in Stage 2 is defined as 
\begin{displaymath}
    \cK := \{\bz = (\bx,\bu) \in \RR^{n+m}\;:\; u_j \geq 0,\; \bx \in \cX,\; \; \widehat{g}_j(\bz):=g_j(\bx)-u_j =0\; \forall \; j \in [m]\}.
\end{displaymath}
Since for any $\bx \in \cX \subset \mathrm{H}^n$, $g_j(\bx) \leq \sum_{\alpha \in \NN^n} |g_{j,\alpha}| = \mon{g_j}$. Without loss of generality, we can assume $\mon{g_j} \leq 1$ for all $j \in [m]$ so that $\cK$ is contained in the box $\mathrm{H}^{n+m}$.

We set $d:=\max\{1,\deg g_1,\ldots,\deg g_m\}$. We next define the substitution map on a sequence through degree $k$. For any $\biy\in\RR^{s(n,k)}$ and integer $t\leq\lfloor k/d\rfloor$, let
\begin{displaymath}
    \varphi_k^t:\; \RR^{s(n,k)} \to \RR^{s(n+m,t)},\quad \biy \mapsto \biy^{k,t},
\end{displaymath}
where we use conventions $\bz^{(\alpha,\beta)} = \bx^\alpha\bu^\beta$ for $\alpha \in \NN^n$ and $\beta \in \NN^m$, and $\bg = (g_1,\dots,g_m)$ to define $\biy^{k,t}$ by value of the Riesz functional on monomials in $\bz$ as
\begin{equation*}
\ell_{\biy^{k,t}}(\bz^{(\alpha,\beta)}) \;=\; \ell_{\biy}(\bx^\alpha\bg(\bx)^\beta).
\end{equation*}
This map is well-defined since for any $\alpha \in \NN^n$ and $\beta \in \NN^m$ such that $|\alpha| + |\beta| \leq t$, the degree of $\bx^\alpha \bg(\bx)^{\beta}$ admits the upper bound
\begin{displaymath}
    \deg(\bx^\alpha \bg(\bx)^{\beta}) \leq d\cdot t \leq k.
\end{displaymath}
This mapping $\varphi_k^t$ possesses the three key properties that play central roles in our later estimations, which are presented in the following lemmas. 
\begin{lemma}\label{lem: Putinar moment sequence}
    Let $k$ be a positive integer and $t \leq \lfloor k / d \rfloor$, if $\biy \in \CM_k(\cX)$ then $\biy^{k,t} \in \CM_t(\cK)$.
\end{lemma}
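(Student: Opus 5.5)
The plan is to pass to an atomic representation and push the atoms forward through the lifting map $\bx\mapsto(\bx,\bg(\bx))$. Since $\biy\in\CM_k(\cX)$ and $\cX$ is compact, Tchakaloff's theorem lets us write
\[
 \biy=\sum_{s=1}^{N}w_s\bv_k(\bx_s),\qquad \bx_s\in\cX,\quad w_s>0,\quad \sum_{s=1}^N w_s=1 .
\]
Set $\bz_s:=(\bx_s,\bg(\bx_s))\in\RR^{n+m}$. We will show that the sequence $\sum_s w_s\bv_t(\bz_s)$ coincides with $\biy^{k,t}$ and that every $\bz_s$ lies in $\cK$. The discrete probability measure $\sum_s w_s\delta_{\bz_s}$ is then supported on $\cK$, so its truncated moments belong to $\CM_t(\cK)$.

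First we check membership of the atoms. For each $s$ we have $\bx_s\in\cX$, and $u_j=g_j(\bx_s)\geq0$ because $g_j\geq0$ on $\cX$. In addition $\widehat g_j(\bz_s)=g_j(\bx_s)-g_j(\bx_s)=0$. Hence $\bz_s\in\cK$ by the definition of $\cK$.

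Next we identify the moments. Take any $(\alpha,\beta)$ with $|\alpha|+|\beta|\leq t$. The polynomial $\bx^\alpha\bg(\bx)^\beta$ has degree at most $dt\leq k$, which is exactly the well-definedness check already made for $\varphi_k^t$. For a polynomial $q\in\RR[\bx]_k$, linearity of $\ell_{\biy}$ together with the atomic form gives $\ell_{\biy}(q)=\sum_s w_s q(\bx_s)$. Applying this with $q=\bx^\alpha\bg^\beta$ yields
\[
 \ell_{\biy^{k,t}}(\bz^{(\alpha,\beta)})=\ell_{\biy}(\bx^\alpha\bg^\beta)=\sum_s w_s\,\bx_s^\alpha\bg(\bx_s)^\beta=\sum_s w_s\,\bz_s^{(\alpha,\beta)} .
\]
Thus $\biy^{k,t}=\sum_s w_s\bv_t(\bz_s)$. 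Its zeroth coordinate is $\sum_s w_s=1$, as required for a probability measure.

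No step here poses a real obstacle. The only point that needs care is that the Riesz functional of a truncated sequence is evaluated only on polynomials of degree at most $k$, and the degree bound $dt\leq k$ guarantees this.
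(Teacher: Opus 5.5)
Your proposal is correct and follows essentially the same route as the paper: a Tchakaloff atomic representation of $\biy$, then pushing the atoms forward through $\bx\mapsto(\bx,\bg(\bx))$ using linearity of the substitution map. You also spell out two checks that the paper only asserts: that each lifted atom lies in $\cK$, and the moment identity via the degree bound $dt\leq k$.
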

\begin{proof}
    Let $\biy \in \CM_k(\cX)$. Since $\cX$ is compact, Tchakaloff's theorem indicates that $\biy$ admits an atomic representation as 
    \begin{displaymath}
        \biy = \sum_{s=1}^{s(n,k)} w_s\bv_k(\bx_s),\qquad \bx_s \in \cX \; \forall s \in [s(n,k)], \quad w_s \geq 0 \ \forall \ s \in [s(n,k)],\; \sum_{s=1}^{s(n,k)} w_s =1. 
\end{displaymath}
Since $\varphi$ is a linear transformation, and the image of $\bv_k(\bx)$ via $\varphi$ can be viewed as 
\begin{displaymath}
    (\bv_k(\bx))^{k,t} = \bv_t(\bx,\bg(\bx)),\quad  \mbox{and}\quad (\bx,\bg(\bx)) \in \cK \quad \forall \bx \in \CX,
\end{displaymath}
we claim that 
\begin{displaymath}
    \biy^{k,t} = \sum_{s=1}^{s(n,k)} w_s\bv_t(\bx_s,\bg(\bx_s)) \in \CM_t(\cK).
\end{displaymath}
\end{proof}

\begin{lemma}\label{lem: Putinar pseudo moment sequence}
Let $r^\prime=\lfloor r/d\rfloor$. For any $\biy\in\CP\CM(\CQ(\cX)_{2r})$ and $t\leq2r^\prime$, the substitution lift $\biy^{2r,t}$ is a truncated pseudo-moment sequence on $\cK$ described by the original constraints in $\bx$, the inequalities $u_j\geq0$, and the equations $g_j-u_j=0$.
\end{lemma}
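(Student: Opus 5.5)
We prove the lemma by pulling back each constraint on $\cK$ through the substitution $\bu\mapsto\bg(\bx)$. Concretely, we show that $\biy^{2r,t}$ lies in $\CP\CM(\CQ(\cK)_{t})$, where $\CQ(\cK)$ is generated by $x_i(1-x_i)$, $g_j(\bx)$, $u_j$, the equations $h_i(\bx)$, and $\widehat g_j=g_j-u_j$. We take $t$ even, $t=2t'$ with $t'\leq r'$, so that moment and localizing matrices of order $t'$ make sense. The key observation is the identity
\[
\ell_{\biy^{2r,t}}(q(\bx,\bu))=\ell_{\biy}\big(q(\bx,\bg(\bx))\big),\qquad \deg q\leq t,
\]
which follows by linearity from the defining formula on monomials. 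This identity is well defined because $\deg q(\bx,\bg(\bx))\leq d\,t\leq 2d r'\leq 2r$. Every constraint on $\cK$ is then checked by testing it against the corresponding quadratic form and substituting.

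\textbf{Step 1 (normalization and moment matrix).} First, $\ell_{\biy^{2r,t}}(1)=\ell_\biy(1)=1$. For $\bM_{t'}(\biy^{2r,t})\succeq0$, take any $p\in\RR[\bz]_{t'}$. Then
\[
\ell_{\biy^{2r,t}}(p^2)=\ell_\biy\big(p(\bx,\bg)^2\big),
\]
and $p(\bx,\bg)$ has degree at most $d t'\leq r$. This is nonnegative because $\bM_r(\biy)\succeq0$.

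\textbf{Step 2 (localizing constraints).} For a generator $g\in\{x_i(1-x_i),\,g_j(\bx),\,u_j\}$ and $p$ of degree at most $t'-\lceil g\rceil$, we have
\[
\ell_{\biy^{2r,t}}(g\,p^2)=\ell_\biy\big(\tilde g\,p(\bx,\bg)^2\big),
\]
where $\tilde g$ equals $x_i(1-x_i)$ or $g_j$ (the case $u_j$ substitutes to $g_j$). This is nonnegative by $\bM_{r-\lceil\tilde g\rceil}(\tilde g\biy)\succeq0$, provided the degree of $p(\bx,\bg)$ is at most $r-\lceil\tilde g\rceil$.

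\textbf{Step 3 (equations).} For $\bx^\alpha\bu^\beta h_i$ of degree at most $t$, substitution gives $\ell_\biy(\bx^\alpha\bg^\beta h_i)$. Expanding $\bx^\alpha\bg^\beta$ into monomials $\bx^\gamma$ with $|\gamma|+\deg h_i\leq 2r$, this vanishes term by term. For $\widehat g_j$ the substituted polynomial is identically zero, so the constraint holds trivially.

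\textbf{Main obstacle: degree bookkeeping in Step 2.} The issue is that $p(\bx,\bg)$ can have degree up to $d(t'-\lceil g\rceil)$, while the localizer only allows degree $r-\lceil\tilde g\rceil$. For $g=u_j$ we need $d(t'-1)\leq r-\lceil g_j\rceil$. Since $\lceil g_j\rceil\leq\lceil d/2\rceil\leq d$, this follows from $dt'\leq r$.

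For $g=g_j(\bx)$ viewed in $\bz$, we have $\lceil g\rceil=\lceil g_j\rceil$, and we need
\[
d\,(t'-\lceil g_j\rceil)+\lceil g_j\rceil\leq r,
\]
which again follows from $dt'\leq r$ because $d\geq1$. The same argument handles $x_i(1-x_i)$.

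If a borderline case fails with the paper's exact truncation conventions, I would shrink $t$ by a constant, which is harmless for the later rate. Finally, the statement's use of $t\leq 2r'$ with possibly odd $t$ is handled by using $\lfloor t/2\rfloor$ as the order of the moment matrices.
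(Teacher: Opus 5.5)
Your proposal is correct and follows the paper's proof: it pulls each constraint on $\cK$ back through the substitution $\bu\mapsto\bg(\bx)$ and uses $dr'\leq r$ to check the normalization, the moment matrix, the localizers for $u_j$ and the original generators, and the equations. The paper proves only the case $t=2r'$, since smaller lifts are truncations of that one and this is the form used in the next lemma; your localizer degree bookkeeping via $\lceil g_j\rceil\leq d$ is, if anything, more precise than the paper's total-degree check.
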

\begin{proof}
It suffices to consider $t=2r^\prime$. For any polynomial $p(\bx,\bu)$ of degree at most $r^\prime$, its substitution $p(\bx,\bg(\bx))$ has degree at most $dr^\prime\leq r$. Hence
\[
 \ell_{\biy^{2r,2r^\prime}}(p^2)
 =\ell_{\biy}(p(\bx,\bg(\bx))^2)\geq0.
\]
For $\deg p\leq r^\prime-1$, the same argument gives
\[
 \ell_{\biy^{2r,2r^\prime}}(u_jp^2)
 =\ell_{\biy}(g_jp(\bx,\bg(\bx))^2)\geq0,
\]
because $2d(r^\prime-1)+\deg g_j\leq2r$. Localizers for the original constraints are treated in the same way. Normalization is preserved. Every multiple of $g_j-u_j$ vanishes after substitution, and every allowed multiple of $h_i$ is mapped to a multiple of $h_i$ of degree at most $2r$. This proves the linear conditions as well.
\end{proof}

\begin{lemma}\label{lem: Putinar pseudo moment sequence for box}
Let $r^\prime$ be a positive integer satisfying $d(r^\prime+2)\leq r$. For any $\biy\in\CP\CM(\CQ(\cX)_{2r})$ and $t\leq2r^\prime$,
\[
 \biy^{2r,t}\in\CP\CM_t(\CQ(\mathrm{H}^{n+m})_{2r^\prime}).
\]
Moreover, its Riesz functional annihilates $h_i^2$ and $(g_j-u_j)^2$ whenever their degrees are at most $t$.
\end{lemma}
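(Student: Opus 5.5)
The plan is to check each defining condition of $\CP\CM_t(\CQ(\mathrm{H}^{n+m})_{2r^\prime})$ directly on the lifted functional, by pulling it back through the substitution $\bu\mapsto\bg(\bx)$. First I fix what must be verified. Here $\CQ(\mathrm{H}^{n+m})_{2r^\prime}$ is generated by $z_l(1-z_l)$ for the $n+m$ coordinates, namely $x_i(1-x_i)$ and $u_j(1-u_j)$. A sequence belongs to $\CP\CM_t(\CQ(\mathrm{H}^{n+m})_{2r^\prime})$ if it is the degree-$t$ truncation of some $\biz\in\RR^{s(n+m,2r^\prime)}$ with $z_0=1$, $\bM_{r^\prime}(\biz)\succeq0$, and $\bM_{r^\prime-1}(z_l(1-z_l)\biz)\succeq0$. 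It therefore suffices to take $\biz=\biy^{2r,2r^\prime}$, which is well defined because $2dr^\prime\leq2r$, and then project to degree $t$. Since $\varphi$ acts monomial-wise and only truncates, the projection of $\biy^{2r,2r^\prime}$ equals $\biy^{2r,t}$.

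\textbf{Step 1: normalization and moment matrix.} Normalization is immediate. For positivity of $\bM_{r^\prime}(\biz)$, take any $p(\bx,\bu)$ of degree at most $r^\prime$. Then $\ell_{\biz}(p^2)=\ell_{\biy}(q^2)$ with $q=p(\bx,\bg(\bx))$, and $\deg q\leq dr^\prime\leq r$. This quantity is nonnegative because $\bM_r(\biy)\succeq0$.

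\textbf{Step 2: box localizers, the main obstacle.} For $\deg p\leq r^\prime-1$, the $x$-localizers are handled directly: $\ell_{\biz}(x_i(1-x_i)p^2)=\ell_{\biy}(x_i(1-x_i)q^2)\geq0$, since $\deg q\leq d(r^\prime-1)\leq r-1$ and $x_i(1-x_i)$ is a constraint of $\cX$ under Assumption~\ref{assume: hypercube containment}. The $u$-localizers are harder, because $g_j(1-g_j)$ is not a generator of $\CQ(\cX)$. I would write
\[
 g_j(1-g_j)q^2=g_j(1-g_j)^2q^2+g_j^2\,g_j q^2\cdot\tfrac{1}{g_j}\cdots
\]
and, more cleanly, use the identity
\[
 u(1-u)=u(1-u)^2+(1-u)u^2,
\]
which reduces the task to showing $\ell_{\biy}((1-g_j)s^2)\geq0$ for $s=g_jq$. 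For that I invoke the coefficient certificate~\eqref{eq: coefficient certificate} with $\mon{g_j}\leq1$: $1-g_j=(\mon{g_j}-g_j)+(1-\mon{g_j})$ lies in $\CQ(\mathrm H^n)_{2\lceil d/2\rceil}\subset\CQ(\cX)$. Multiplying this certificate by $s^2$ gives an element of $\CQ(\cX)_{2r}$, provided the degree budget holds. The budget is $\deg s\leq d(r^\prime-1)+d=dr^\prime$, plus at most $2\lceil d/2\rceil\leq d+1$ from the certificate, for a total of roughly $2dr^\prime+d+1\leq 2r$. This is exactly where the extra slack $d(r^\prime+2)\leq r$ is needed. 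Pseudo-moment functionals are nonnegative on $\CQ(\cX)_{2r}$ by construction, so the claim follows. The first term, $g_j(1-g_j)^2q^2$, is a localizer for $g_j$ of degree at most $2d(r^\prime-1)+2d+d\leq2r$ under the same hypothesis. I expect the careful degree bookkeeping here to be the main obstacle.

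\textbf{Step 3: annihilation.} By definition of the substitution, $\ell_{\biy^{2r,t}}((g_j-u_j)^2)=\ell_{\biy}((g_j-g_j)^2)=0$. Likewise $\ell_{\biy^{2r,t}}(h_i^2)=\ell_{\biy}(h_i\cdot h_i)=0$, because $h_i\cdot h_i$ is an allowed multiple of $h_i$ when $2\deg h_i\leq t\leq2r$, and $\biy$ satisfies the linear conditions $\ell_{\biy}(\bx^\alpha h_i)=0$.
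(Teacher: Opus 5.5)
Your proposal is correct and follows the paper's proof essentially step for step. You reduce to $t=2r'$, pull back the moment matrix and the $x_i(1-x_i)$ localizers directly, and treat $u_j(1-u_j)$ through the identity $u(1-u)=u(1-u)^2+(1-u)u^2$ together with the coefficient certificate $1-g_j\in\CQ(\mathrm H^n)_{2\lceil d/2\rceil}$, which comes from $\mon{g_j}\leq1$. You then obtain the annihilation of $h_i^2$ and $(g_j-u_j)^2$ from substitution and the truncated ideal conditions.

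Two small edits: delete the garbled false-start display before the clean identity, and note that your degree count only needs $d(r'+1)\leq r$, so $d(r'+2)\leq r$ gives room to spare rather than being exactly what is needed.
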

\begin{proof}
The moment matrix, normalization and localizing conditions for $x_i(1-x_i)$ follow as in Lemma~\ref{lem: Putinar pseudo moment sequence}. It remains to establish the localizers for $u_j(1-u_j)$. Since $\mon{g_j}\leq1$,~\eqref{eq: coefficient certificate} implies
\[
 1-g_j\in\CQ(\mathrm{H}^n)_{2\lceil d/2\rceil}.
\]
The identity
\[
 g_j(1-g_j)=g_j^2(1-g_j)+(1-g_j)^2g_j
\]
therefore provides a certificate in $\CQ(\cX)_{4d}$. For every $p\in\RR[\bx,\bu]_{r^\prime-1}$, multiplying this certificate by $p(\bx,\bg(\bx))^2$ gives degree at most
\[
 2d(r^\prime-1)+4d=2d(r^\prime+1)\leq2r.
\]
Consequently,
\[
 \ell_{\biy^{2r,2r^\prime}}(u_j(1-u_j)p^2)
 =\ell_{\biy}(g_j(1-g_j)p(\bx,\bg(\bx))^2)\geq0.
\]
The vanishing equations follow from substitution and the full truncated equality ideal. This completes the proof.
\end{proof}

Following the preceding lemmas, we now analyze the convergence rate of the Putinar-type hierarchy. Fix the degree $k$ of the moments to be approximated, and set
\begin{equation}\label{eq: residual constant}
\begin{split}
 t&:=\max\{k,2d,2\deg h_1,\ldots,2\deg h_p\},\\
 C_{\bg,\bh}&:=\sum_{i=1}^p\monc{h_i^2}
              +\sum_{j=1}^m\monc{(g_j-u_j)^2}\\
 &\leq\sum_{i=1}^p\mon{h_i}^{2}
       +\sum_{j=1}^m(1+\mon{g_j})^2.
\end{split}
\end{equation}
Here and below the maximum ignores an empty list. The final inequality follows from the product inequalities in Appendix~\ref{app:norms}.

\begin{theorem}\label{thm: convergence rate Putinar-type}
Let Assumption~\ref{assume: hypercube containment} hold, with $\mon{g_j}\leq1$. Choose an integer $r^\prime\geq\max\{2,t\}$ such that
\[
 d(r^\prime+2)\leq r,\qquad n+m\mid r^\prime,\qquad
 \frac{r^\prime}{\log_2r^\prime}\geq c_2(n+m)^2t^2.
\]
Then
\begin{equation}\label{eq: Putinar general bound}
 \bd_k(\CQ(\cX)_{2r})
 \leq\varepsilon+c_{\Lo}\Lip_k(\mathrm{H}^n)
                    (C_{\bg,\bh}\varepsilon)^{\Lo/2},
\end{equation}
where
\[
 \varepsilon=c_1(n+m)^3t^2\sqrt{s(n+m,t)-1}
                     \frac{\log_2^3r^\prime}{(r^\prime)^2}.
\]
For every $f\in\RR[\bx]_k$, with $f_{\min}=\min_{\bx\in\cX}f(\bx)$, the corresponding optimal-value bound is
\[
 0\leq f_{\min}-\lb(f,\CQ(\cX))_r
 \leq\monc f\left(\varepsilon+c_{\Lo}\Lip_k(\mathrm H^n)
                      (C_{\bg,\bh}\varepsilon)^{\Lo/2}\right).
\]
In particular, for fixed problem data, fixed $k$, and fixed $f\in\RR[\bx]_k$,
\[
\begin{aligned}
 \bd_k(\CQ(\cX)_{2r})
   &=\mathrm{O}\!\left(\frac{(\log_2 r)^{3\Lo/2}}{r^{\Lo}}\right),\\
 f_{\min}-\lb(f,\CQ(\cX))_r
   &=\mathrm{O}\!\left(\frac{(\log_2 r)^{3\Lo/2}}{r^{\Lo}}\right).
\end{aligned}
\]
\end{theorem}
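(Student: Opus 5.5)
Fix $\biy\in\CP\CM(\CQ(\cX)_{2r})$; it suffices to bound $\bd(\pi_k^{2r}(\biy),\CM_k(\cX))$ uniformly. First lift: by Lemma~\ref{lem: Putinar pseudo moment sequence for box}, with $d(r^\prime+2)\leq r$, the substitution lift $\biy^{2r,t}$ lies in $\CP\CM_t(\CQ(\mathrm{H}^{n+m})_{2r^\prime})$, and its Riesz functional annihilates $h_i^2$ and $(g_j-u_j)^2$, since $t\geq 2\deg h_i$ and $t\geq 2d$. Next apply Theorem~\ref{thm: Hausdorff over H^n} in dimension $n+m$, degree $t$ and level $r^\prime$; its hypotheses $r^\prime\geq\max\{2,t\}$, $n+m\mid r^\prime$ and $r^\prime/\log_2 r^\prime\geq c_2(n+m)^2t^2$ are exactly the ones assumed. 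This yields $\overline{\biy}^\prime\in\CM_t(\mathrm{H}^{n+m})$ with $\|\biy^{2r,t}-\overline{\biy}^\prime\|\leq\varepsilon$. By Tchakaloff, write $\overline{\biy}^\prime=\sum_s w_s\bv_t(\bx_s,\bu_s)$ with $(\bx_s,\bu_s)\in\mathrm{H}^{n+m}$.

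Second, set $e_r$. Define $\overline{\biy}:=\sum_s w_s\bv_k(\bx_s)$. Since $k\leq t$, the $\bx$-moments of degree at most $k$ of $\biy^{2r,t}$ are precisely $\pi_k^{2r}(\biy)$, because $\ell_{\biy^{2r,t}}(\bx^\alpha)=\ell_{\biy}(\bx^\alpha)$. Coordinate restriction is a contraction, so $\|\pi_k^{2r}(\biy)-\overline{\biy}\|\leq\varepsilon$, giving $e_r=\varepsilon$.

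Third, bound the average squared violation $\eta_r$. For each $q\in\{h_i^2,(g_j-u_j)^2\}$ we have $\deg q\leq t$ and $\ell_{\biy^{2r,t}}(q)=0$. Cauchy--Schwarz then gives
\[
\sum_s w_s q(\bx_s,\bu_s)=\ell_{\overline{\biy}^\prime}(q)-\ell_{\biy^{2r,t}}(q)\leq\monc{q}\,\varepsilon .
\]
To link this to $\bp$, note that at each atom $x_s\in\mathrm{H}^n$, so the box constraints contribute no violation. Moreover $u_{s,j}\geq0$, hence $\max\{0,-g_j(\bx_s)\}\leq|g_j(\bx_s)-u_{s,j}|$. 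Therefore
\[
\bp(\bx_s)^2\leq\sum_i h_i(\bx_s)^2+\sum_j(g_j(\bx_s)-u_{s,j})^2 .
\]
Summing over the atoms gives $\eta_r\leq C_{\bg,\bh}\varepsilon$. The Łojasiewicz inequality~\eqref{eq: poly_Lojasiewicz inequality} holds with $B=\mathrm{H}^n$. Plugging into~\eqref{eq: universal moment transfer} with $\CY=\mathrm{H}^n$ yields~\eqref{eq: Putinar general bound}, and taking the supremum over $\biy$ finishes the Hausdorff bound.

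Finally, the optimal-value bound follows from Lemma~\ref{lemma: distance to convergence rate} combined with strong duality $\lb=\mlb$ (Stage 1). For the asymptotics, choose $r^\prime$ as the largest multiple of $n+m$ with $r^\prime\leq r/d-2$, so $r^\prime=\Theta(r)$. Then $\varepsilon=\mathrm O(\log_2^3 r/r^2)$, and since $\Lo\leq1$ the term $\varepsilon^{\Lo/2}=\mathrm O((\log_2 r)^{3\Lo/2}/r^{\Lo})$ dominates.

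The main obstacle I expect is the third step. It needs the box atoms $(\bx_s,\bu_s)$, which need not lie in $\cK$, to have their scalar violation $\bp(\bx_s)$ controlled by the squared residuals. The facts that $u_s\geq0$ on the box and that the squared equations have degree at most $t$, so that they are annihilated by the lift, are what make this work.
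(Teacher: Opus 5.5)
Your proposal is correct and follows the paper's proof essentially step by step. The chain is the same: a substitution lift into $\CP\CM_t(\CQ(\mathrm{H}^{n+m})_{2r^\prime})$, projection onto $\CM_t(\mathrm{H}^{n+m})$ via Theorem~\ref{thm: Hausdorff over H^n}, the coefficient Cauchy--Schwarz bound on the annihilated squares $h_i^2$ and $(g_j-u_j)^2$ combined with $u_{s,j}\geq0$, and the common atom estimate~\eqref{eq: universal moment transfer} with $e_r=\varepsilon$ and $\eta_r=C_{\bg,\bh}\varepsilon$. Your explicit remark that the box constraints produce no violation at atoms in $\mathrm{H}^n$ is a detail the paper leaves implicit.
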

\begin{proof}
Let $\biy$ be an arbitrary element of $\CP\CM_k(\CQ(\cX)_{2r})$. In what follows, we perform the same sequence of lifts and projections as in Section~\ref{sec: method}.
\begin{itemize}
\item Choose a full feasible extension of $\biy$ through degree $2r$. Apply the substitution map to this extension, and denote its degree-$t$ lift by $\biy^{2r,t}$. Lemma~\ref{lem: Putinar pseudo moment sequence for box} places it in $\CP\CM_t(\CQ(\mathrm{H}^{n+m})_{2r^\prime})$.
\item Let $\overline{\biy}^{\prime}$ be its projection onto $\CM_t(\mathrm{H}^{n+m})$, and take an atomic representation
\[
 \overline{\biy}^{\prime}=\sum_{s=1}^Nw_s\bv_t(\bx_s,\bu_s),\qquad
 w_s\geq0,\quad\sum_{s=1}^Nw_s=1,\quad(\bx_s,\bu_s)\in\mathrm{H}^{n+m}.
\]
Here $N\leq s(n+m,t)$; the estimates below do not use this bound on $N$.
\item Define $\overline{\biy}=\sum_sw_s\bv_k(\bx_s)$. For each $s$, let $\widetilde{\bx}_s$ be a closest point in $\cX$ to $\bx_s$, and set
\[
 \widetilde{\biy}=\sum_sw_s\bv_k(\widetilde{\bx}_s),\qquad
 \widetilde{\biy}^{\prime}=\sum_sw_s\bv_t(\widetilde{\bx}_s,\bg(\widetilde{\bx}_s)).
\]
\end{itemize}

We evaluate the two quantities in~\eqref{eq: universal moment transfer}. First, Theorem~\ref{thm: Hausdorff over H^n} gives
\[
 \|\biy-\overline{\biy}\|\leq\|\biy^{2r,t}-\overline{\biy}^{\prime}\|\leq\varepsilon.
\]
Next, we bound the average squared violation of the $\bx$-atoms.
The lifted functional annihilates all squared equations. The Cauchy--Schwarz inequality for coefficient vectors therefore gives
\begin{align}
 \sum_sw_sh_i(\bx_s)^2
 &=\ell_{\overline{\biy}^{\prime}-\biy^{2r,t}}(h_i^2)
 \leq\varepsilon\monc{h_i^2},\label{eq: atom ine 1}\\
 \sum_sw_s(g_j(\bx_s)-u_{s,j})^2
 &\leq\varepsilon\monc{(g_j-u_j)^2}.\label{eq: Put Lo 2}
\end{align}
Since $u_{s,j}\geq0$, we have
$\max\{0,-g_j(\bx_s)\}\leq|g_j(\bx_s)-u_{s,j}|$. Thus
\[
 \sum_sw_s\bp(\bx_s)^2
 \leq\sum_sw_s\left(\sum_i h_i(\bx_s)^2+
                          \sum_j(g_j(\bx_s)-u_{s,j})^2\right)
 \leq C_{\bg,\bh}\varepsilon.
\]
Apply~\eqref{eq: universal moment transfer} with $e_r=\varepsilon$ and $\eta_r=C_{\bg,\bh}\varepsilon$, and take the supremum over $\biy$. This proves~\eqref{eq: Putinar general bound}. For the asymptotic rate, take the largest multiple of $n+m$ not exceeding $r/d-2$; it is proportional to $r$ and satisfies the remaining conditions for all sufficiently large $r$. The optimal-value bound follows from Lemma~\ref{lemma: distance to convergence rate} and strong duality.
\end{proof}

For both bounds, a linear error bound ($\Lo=1$) gives $\mathrm{O}((\log_2 r)^{3/2}/r)$, whereas an error bound with $\Lo=1/2$ gives $\mathrm{O}((\log_2 r)^{3/4}/\sqrt r)$.

\section[Krivine--Stengle-type and Extended-Handelman-type moment-SOS hierarchy]{Krivine--Stengle-type and Extended-Handelman-type\\ moment-SOS hierarchy}\label{sec: LP}

This section aims to study the convergence rate of the Krivine--Stengle-type and the extended-Handelman-type moment-SOS hierarchies. After an affine change of variables, we assume that the nonempty set $\cX$ is contained in $\mathrm{H}^n$. For the sake of simplicity, we include the coordinate inequalities explicitly and normalize $\mon{g_j}\leq1$ for all $j\in[m]$. Thus
\[
 \cX=\{\bx\in\RR^n:x_i\geq0,\;1-x_i\geq0\ (i\in[n]),\;
             g_j(\bx)\geq0\ (j\in[m]),\;h_i(\bx)=0\ (i\in[p])\}.
\]
We consider the extended-Handelman-type hierarchy~\eqref{hierarchy: LP}, where
\begin{multline*}
 \CR(\cX)_r=\Big\{\sum_{\alpha,\beta,\gamma}
      \sigma_{\alpha,\beta,\gamma}\bg^\alpha\bx^\beta(1-\bx)^\gamma
        +\sum_{i=1}^p\tau_i h_i:\;\sigma_{\alpha,\beta,\gamma}\geq0,\;\tau_i\in\RR[\bx],\\
 \deg(\bg^\alpha)+|\beta|+|\gamma|\leq r,\quad
 \deg\tau_i+\deg h_i\leq r\Big\}.
\end{multline*}
The normalized Krivine--Stengle preprime also includes factors $1-g_j$. It contains the displayed cone, so every estimate below applies to it as well. For a description without the explicit coordinate generators, the same transfer is valid once fixed-degree certificates for those generators have been supplied; the associated degree overhead must then be included.

We next follow the two stages described in Section~\ref{sec: method}.
\subsection{Stage 1: Base estimation}
The extended-Handelman-type hierarchy has the dual formulation~\eqref{hierarchy: LP}, whose primal form is
\begin{equation*}
 \mlb(f,\CR(\cX))_r=\min\{\ell_{\biy}(f):\biy\in\CP\CM(\CR(\cX)_r)\},
\end{equation*}
where
\begin{multline*}
 \CP\CM(\CR(\cX)_r)=\{\biy\in\RR^{s(n,r)}:\iy_0=1,\;
  \ell_{\biy}(\bx^\alpha h_i)=0\quad(|\alpha|+\deg h_i\leq r),\\
 \ell_{\biy}(\bg^\alpha\bx^\beta(1-\bx)^\gamma)\geq0
 \quad(\deg(\bg^\alpha)+|\beta|+|\gamma|\leq r)\}.
\end{multline*}
\begin{lemma}
For positive integers $k\leq r$, the set $\CP\CM_k(\CR(\cX)_r)$ is compact and contained in the Euclidean ball centered at the origin of radius $\sqrt{s(n,k)}$.
\end{lemma}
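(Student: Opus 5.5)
The plan mirrors the argument behind \eqref{eq: sharp radius} for the quadratic module, but uses the linear (LP) generators instead of SOS certificates. The key observation is that every monomial $\bx^\alpha$ with $|\alpha|\leq r$ satisfies two facts. First, $\bx^\alpha=\bx^\alpha(1-\bx)^0$ is itself a generator of the preprime with zero $\bg$-part. Second, $1-\bx^\alpha$ is a nonnegative combination of generators $\bx^\beta(1-\bx)^\gamma$ with $|\beta|+|\gamma|\leq|\alpha|\leq r$.

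For the second fact I would use the telescoping identity
\[
 1-\bx^\alpha=\sum_{i=1}^{|\alpha|}\bx^{\alpha^{(i-1)}}(1-x_{j_i}),
\]
obtained by writing $\bx^\alpha=x_{j_1}\cdots x_{j_{|\alpha|}}$ and setting $\alpha^{(i)}$ to be the multi-index of the first $i$ factors. Each summand has the form $\bx^\beta(1-\bx)^\gamma$ with $|\gamma|=1$ and $|\beta|+|\gamma|=i\leq|\alpha|$, so each is an admissible generator in $\CR(\cX)_r$.

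Now take $\biy\in\CP\CM(\CR(\cX)_r)$. The constraint $\ell_{\biy}(\bx^\alpha)\geq0$ gives $\iy_\alpha\geq0$. Linearity, the inequalities on the generators, and $\iy_0=1$ give $1-\iy_\alpha=\ell_{\biy}(1-\bx^\alpha)\geq0$. Hence $0\leq\iy_\alpha\leq1$ for all $|\alpha|\leq r$. Projecting to degree $k\leq r$, each of the $s(n,k)$ coordinates of $\pi_k^r(\biy)$ lies in $[0,1]$, so $\|\pi_k^r(\biy)\|\leq\sqrt{s(n,k)}$. This gives the ball containment.

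For compactness, note that $\CP\CM(\CR(\cX)_r)$ is an intersection of finitely many closed half-spaces and hyperplanes, hence closed. By the coordinate bound it is also bounded, so it is a compact polyhedron. Its image $\CP\CM_k(\CR(\cX)_r)$ under the continuous linear map $\pi_k^r$ is therefore compact. It is nonempty because $\cX\neq\emptyset$: the moment sequence of a Dirac measure at a point of $\cX$ is feasible.

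There is no real obstacle in this argument. The only point to check is that the telescoping certificate respects the degree bound $|\beta|+|\gamma|\leq r$, and that it uses only the explicitly included coordinate generators $x_i$ and $1-x_i$, not the $g_j$. Both hold by construction.
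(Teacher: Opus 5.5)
Your proof is correct and essentially the paper's argument: your telescoping identity is the unrolled form of the paper's inductive step $1-\bx^{\alpha+e_i}=x_i(1-\bx^\alpha)+(1-x_i)$. The bounds $0\leq\iy_\alpha\leq1$, the radius $\sqrt{s(n,k)}$, and compactness via closedness, boundedness and projection are obtained the same way, and writing the certificate directly as a nonnegative combination of generators $\bx^\beta(1-\bx)^\gamma$ with $|\beta|+|\gamma|\leq|\alpha|$ spares you the paper's implicit use of closure of the preprime under multiplication.
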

\begin{proof}
We show by induction that
\begin{equation}\label{eq: bound}
 0\leq\ell_{\biy}(\bx^\alpha)\leq1\qquad(|\alpha|\leq r).
\end{equation}
The left-hand inequality follows from the defining positivity constraints. For the right-hand inequality, $1-x_i\in\CR(\mathrm{H}^n)_1$. If $1-\bx^\alpha\in\CR(\mathrm{H}^n)_t$, then
\[
 1-\bx^{\alpha+e_i}=x_i(1-\bx^\alpha)+1-x_i
                  \in\CR(\mathrm{H}^n)_{t+1}.
\]
This proves~\eqref{eq: bound}. The full feasible set is closed and bounded, and its coordinate projection is therefore compact. The radius bound follows by summing the squares of its components.
\end{proof}
The primal feasible set is nonempty and compact. Linear programming duality therefore gives equality of the moment and certificate bounds. We next recall the error bound of de Klerk and Laurent~\cite[Theorem 3.4(i)]{deKlerkLaurent2010} for the Handelman cone on the hypercube. This is the base estimate for the Extended-Handelman-type hierarchy considered here.

\begin{lemma}[\cite{deKlerkLaurent2010}, Theorem 3.4(i)]\label{lem: rate on hypercube Handelman}
Let $f\in\RR[\bx]_k$, with $k\geq1$, and let $r\geq\max\{n,k\}$. Then
\[
 f_{\min}-\mlb(f,\CR(\mathrm{H}^n))_r
 =f_{\min}-\lb(f,\CR(\mathrm{H}^n))_r
 \leq\frac{L(f)}{\lfloor r/n\rfloor}\binom{k+1}{3}n^k,
\]
where $f_{\min}=\min_{\mathrm{H}^n}f$ and
\[
 L(f):=\max_{\alpha\in\NN^n_k}|f_\alpha|\frac{\alpha!}{|\alpha|!},
 \qquad f=\sum_{\alpha\in\NN^n_k}f_\alpha\bx^\alpha.
\]
\end{lemma}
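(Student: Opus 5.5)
The plan is to follow the Bernstein-polynomial argument of de Klerk and Laurent, together with the linear programming duality already noted above. The equality $\mlb(f,\CR(\mathrm{H}^n))_r=\lb(f,\CR(\mathrm{H}^n))_r$ comes from LP duality. The primal feasible set is nonempty (it contains the Dirac moment sequences) and compact by the preceding lemma, so there is no duality gap. It therefore suffices to bound $f_{\min}-\lb(f,\CR(\mathrm{H}^n))_r$ by exhibiting an explicit certificate.

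\textbf{Step 1 (the certificate).} Set $q:=\lfloor r/n\rfloor\geq1$; this is where $r\geq n$ is used. We also need $q\geq$ each coordinate degree of $f$, and the hypothesis $r\geq k$ is only a convenient way to guarantee the final bound is meaningful. Expand $f$ in the tensor Bernstein basis of multidegree $(q,\ldots,q)$:
\[
 f=\sum_{\beta\in\{0,\ldots,q\}^n}c_\beta\prod_{i=1}^n\binom{q}{\beta_i}x_i^{\beta_i}(1-x_i)^{q-\beta_i}.
\]
Such an expansion exists because each monomial $\bx^\alpha$ with $\alpha_i\leq q$ is expressible this way. Every basis element has total degree $nq\leq r$ and is a nonnegative multiple of $\bx^\beta(1-\bx)^{q\mathbf 1-\beta}$. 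The basis elements also sum to $1$. Hence
\[
 f-\min_\beta c_\beta=\sum_\beta\big(c_\beta-\min_\beta c_\beta\big)\prod_i\binom{q}{\beta_i}x_i^{\beta_i}(1-x_i)^{q-\beta_i}\in\CR(\mathrm{H}^n)_r,
\]
so $\lb(f,\CR(\mathrm{H}^n))_r\geq\min_\beta c_\beta$.

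\textbf{Step 2 (comparing coefficients with grid values).} Since $\beta/q\in\mathrm{H}^n$, we have $f(\beta/q)\geq f_{\min}$. It therefore remains to prove
\[
 |c_\beta-f(\beta/q)|\leq\frac{L(f)}{q}\binom{k+1}{3}n^k\quad\text{for every }\beta .
\]
By linearity, $c_\beta=\sum_\alpha f_\alpha c_\beta(\bx^\alpha)$. The Bernstein coefficient of a monomial factorizes into univariate coefficients:
\[
 c_\beta(\bx^\alpha)=\prod_i\frac{(\beta_i)_{\alpha_i}}{(q)_{\alpha_i}},
\]
where $(a)_j=a(a-1)\cdots(a-j+1)$ is the falling factorial. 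One compares this with $\prod_i(\beta_i/q)^{\alpha_i}$. Both factors lie in $[0,1]$, so a telescoping argument reduces the difference to univariate differences. Each univariate difference is bounded by $\binom{\alpha_i}{2}/q$, obtained by expanding $\prod_{l<j}\frac{\beta_i-l}{q-l}$ against $(\beta_i/q)^j$. Summing over $i$ gives at most $\binom{|\alpha|}{2}/q$ for the monomial.

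\textbf{Step 3 (summing over monomials; the main obstacle).} Writing $|f_\alpha|\leq L(f)\,|\alpha|!/\alpha!$, the total error is bounded by
\[
 \frac{L(f)}{q}\sum_{j=2}^{k}\binom{j}{2}\sum_{|\alpha|=j}\frac{j!}{\alpha!}=\frac{L(f)}{q}\sum_{j=2}^k\binom{j}{2}n^j\leq\frac{L(f)}{q}\,n^k\sum_{j\leq k}\binom j2=\frac{L(f)}{q}\binom{k+1}{3}n^k,
\]
using the multinomial identity and the hockey-stick identity. The delicate part is the univariate falling-factorial estimate in Step 2, namely getting exactly $\binom{j}{2}/q$ with no stray constants. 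I would prove it by induction on $j$: write $\frac{\beta-j+1}{q-j+1}-\frac{\beta}{q}=-\frac{(j-1)(q-\beta)}{q(q-j+1)}$, and control it by $(j-1)/q$ after tracking the accompanying product. If that does not close cleanly, I would fall back on the probabilistic interpretation, in which $c_\beta(x^j)$ is the probability that sampling without replacement hits only successes, versus sampling with replacement. The collision bound then gives $\binom j2/q$.
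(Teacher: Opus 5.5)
There is a genuine gap, in Step 1. Your LP-duality remark and your Steps 2--3 are sound: the estimate $\bigl|(b)_a/(q)_a-(b/q)^a\bigr|\le\binom a2/q$, the telescoping over coordinates, and the multinomial and hockey-stick summation all check out.

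Expanding $f$ in the tensor Bernstein basis of multidegree $(q,\ldots,q)$ requires $q=\lfloor r/n\rfloor\ge\deg_{x_i}f$ for every $i$. The hypothesis $r\ge\max\{n,k\}$ does not give this.
\begin{itemize}
\item For $n=2$, $k=3$, $r=3$ you get $q=1$, and $f=x_1^3$ has no multilinear Bernstein expansion. Your coefficient formula would divide by $(q)_3=0$.
\item More generally, $r=n\ge k\ge2$ forces $q=1$, while $f$ may contain $x_1^2$.
\item No other tensor Bernstein basis of total degree at most $r$ rescues this. Take $f=x_1^3+x_2^3$ with $r=3$: any multidegree $(q_1,q_2)$ needs $q_1,q_2\ge3$.
\end{itemize}
So your remark that $r\ge k$ is ``only a convenient way to guarantee the final bound is meaningful'' is incorrect. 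The cases $\lfloor r/n\rfloor<\max_i\deg_{x_i}f$ remain unproved.

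The missing idea is to use the Bernstein operator rather than an expansion of $f$, and to certify the residual separately at degree $k$.
\begin{itemize}
\item For every $q\ge1$, set $B_qf:=\sum_{\beta\in\{0,\ldots,q\}^n}f(\beta/q)\prod_i\binom q{\beta_i}x_i^{\beta_i}(1-x_i)^{q-\beta_i}$. Then $B_qf-f_{\min}\in\CR(\mathrm H^n)_{nq}$.
\item By the Stirling-number expansion of binomial moments, $B_q(\bx^\alpha)=\sum_{\gamma\le\alpha}p_{\alpha\gamma}\bx^\gamma$, with $\gamma\le\alpha$ componentwise. Here $p_{\alpha\gamma}\ge0$, $\sum_\gamma p_{\alpha\gamma}=1$ and $p_{\alpha\alpha}=\prod_i(q)_{\alpha_i}/q^{\alpha_i}$.
\item Hence $\bx^\alpha-B_q(\bx^\alpha)=\sum_{\gamma\ne\alpha}p_{\alpha\gamma}(\bx^\alpha-\bx^\gamma)$. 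So $f_\alpha(\bx^\alpha-B_q\bx^\alpha)+|f_\alpha|(1-p_{\alpha\alpha})$ is a nonnegative combination of $\bx^\gamma(1-\bx^{\alpha-\gamma})$, of $\bx^\alpha+(1-\bx^\gamma)$, and of $1$. All of these lie in $\CR(\mathrm H^n)_{|\alpha|}$ by the induction in the compactness lemma.
\item The bound $1-p_{\alpha\alpha}\le\binom{|\alpha|}2/q$ always holds. If all $\alpha_i\le q$, it follows from the Weierstrass product inequality. Otherwise $p_{\alpha\alpha}=0$ and $\binom{|\alpha|}2/q\ge(q+1)/2\ge1$.
\end{itemize}
Combining this with your Step 3 gives $f-f_{\min}+\frac{L(f)}q\binom{k+1}3n^k\in\CR(\mathrm H^n)_{\max\{nq,k\}}\subseteq\CR(\mathrm H^n)_r$. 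This matches the certificate degree $\max\{n\ell,k\}$ that the paper quotes from de Klerk--Laurent, and it is exactly where $r\ge k$ is needed. In the regime $q\ge\max_i\deg_{x_i}f$ your argument is correct as it stands.
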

\begin{proof}
Set $\ell=\lfloor r/n\rfloor\geq1$. Theorem 3.4(i) of~\cite{deKlerkLaurent2010}, with polynomial degree $k$ and Bernstein order $\ell$, gives
\[
 f-f_{\min}+\frac{L(f)}\ell\binom{k+1}{3}n^k
 \in\CR(\mathrm{H}^n)_{\max\{n\ell,k\}}.
\]
Since $\max\{n\ell,k\}\leq r$, the claimed bound follows. Thus the integer in the denominator of the cited theorem is the Bernstein order; the certificate degree is at most the maximum of $n$ times that order and the objective degree.
\end{proof}

\begin{theorem}\label{thm: bound on Hausdorff Handelman over hypercube}
For positive integers $k$ and $r\geq\max\{n,k\}$,
\[
 \bd_k(\CR(\mathrm{H}^n)_r)
 \leq\frac{n^k}{\lfloor r/n\rfloor}\binom{k+1}{3}.
\]
For every $f\in\RR[\bx]_k$, with $f_{\min}=\min_{\bx\in\mathrm H^n}f(\bx)$, the corresponding optimal-value bound is
\[
 0\leq f_{\min}-\lb(f,\CR(\mathrm H^n))_r
 \leq\monc f\,\frac{n^k}{\lfloor r/n\rfloor}\binom{k+1}{3}.
\]
In particular, for fixed $n,k$ and $f$, both the Hausdorff distance and the optimal-value gap converge at rate $\mathrm O(1/r)$.
\end{theorem}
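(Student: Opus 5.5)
The plan is to copy the projection argument from the proof of Theorem~\ref{thm: Hausdorff over H^n}, with the quadratic-module base rate replaced by Lemma~\ref{lem: rate on hypercube Handelman}.

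\textbf{Step 1: extremal pair and separating objective.} By the preceding lemma, $\CP\CM_k(\CR(\mathrm H^n)_r)$ is compact. The set $\CM_k(\mathrm H^n)$ is compact and convex. Hence there exist $\overline{\biy}$ in the pseudo-moment set and its unique projection $\widetilde{\biy}\in\CM_k(\mathrm H^n)$ such that $\bd_k(\CR(\mathrm H^n)_r)=\|\widetilde{\biy}-\overline{\biy}\|$. Define the auxiliary objective
\[
 f(\bx)=\langle\widetilde{\biy}-\overline{\biy},\bv_k(\bx)\rangle .
\]
The projection optimality condition gives $\min_{\mathrm H^n}f=\ell_{\widetilde{\biy}}(f)$. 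Moreover, $\mlb(f,\CR(\mathrm H^n))_r\le\ell_{\overline{\biy}}(f)$. Subtracting these yields
\[
 \|\widetilde{\biy}-\overline{\biy}\|^2\le f_{\min}-\mlb(f,\CR(\mathrm H^n))_r .
\]

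\textbf{Step 2: apply the base rate.} The hypothesis $r\ge\max\{n,k\}$ is exactly what Lemma~\ref{lem: rate on hypercube Handelman} needs. It bounds the right-hand side by $\frac{L(f)}{\lfloor r/n\rfloor}\binom{k+1}{3}n^k$.

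\textbf{Step 3: bound $L(f)$.} Since $\alpha!\le|\alpha|!$ for every multi-index, $L(f)\le\max_\alpha|f_\alpha|$. The coefficients of $f$ are the entries of $\widetilde{\biy}-\overline{\biy}$, so $\max_\alpha|f_\alpha|\le\|\widetilde{\biy}-\overline{\biy}\|$. If the distance is nonzero, dividing by it gives the Hausdorff bound; if it is zero, the bound is trivial. This step needs no $\sqrt{s(n,k)}$ factor, because the de~Klerk--Laurent constant uses the maximum coefficient rather than a coefficient norm.

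\textbf{Step 4: optimal-value bound and rate.} The bound $0\le f_{\min}-\lb$ holds because $\CR(\mathrm H^n)_r\subset\CP_+(\mathrm H^n)$. The upper bound follows from Lemma~\ref{lemma: distance to convergence rate} together with LP strong duality $\lb=\mlb$, which was noted before Lemma~\ref{lem: rate on hypercube Handelman}. The rate $\mathrm O(1/r)$ follows since $\lfloor r/n\rfloor\ge r/(2n)$ for $r\ge n$.

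The main point to check is Step 1: the optimality inequality must be used with the correct sign for the LP relaxation, and the constant term of $f$ must vanish because both sequences have first component $1$. I expect both to carry over verbatim from the proof of Theorem~\ref{thm: Hausdorff over H^n}.
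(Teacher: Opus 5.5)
Your proposal is correct and follows the paper's proof essentially step for step. The shared steps are the compactness/attainment argument, the first-order projection inequality with the auxiliary objective $f=\langle\widetilde{\biy}-\overline{\biy},\bv_k\rangle$, the base bound from Lemma~\ref{lem: rate on hypercube Handelman}, the estimate $L(f)\leq\|\widetilde{\biy}-\overline{\biy}\|$ via $\alpha!\leq|\alpha|!$, division by the distance, and then Lemma~\ref{lemma: distance to convergence rate} with LP duality. The vanishing constant coefficient you flag is harmless but not actually needed here, since $\max_\alpha|f_\alpha|$ is bounded by the Euclidean norm in any case.
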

\begin{proof}
Choose a pseudo-moment sequence $\overline{\biy}$ attaining the directed Hausdorff distance and let $\widetilde{\biy}$ be its projection onto $\CM_k(\mathrm{H}^n)$. Repeat the first-order optimality argument in Theorem~\ref{thm: Hausdorff over H^n}, with
\[
 f(\bx)=\langle\widetilde{\biy}-\overline{\biy},\bv_k(\bx)\rangle.
\]
Both moment sequences have constant component $1$, so $f$ has zero constant coefficient. Since $\alpha!/|\alpha|!\leq1$,
\[
 L(f)\leq\max_{\alpha\ne0}|\widetilde{\iy}_\alpha-\overline{\iy}_\alpha|
       \leq\|\widetilde{\biy}-\overline{\biy}\|.
\]
Lemma~\ref{lem: rate on hypercube Handelman} therefore gives
\[
 \bd_k(\CR(\mathrm{H}^n)_r)^2
 \leq\frac{n^k}{\lfloor r/n\rfloor}\binom{k+1}{3}
                         \|\widetilde{\biy}-\overline{\biy}\|.
\]
Division by the distance, if it is nonzero, proves the claim. In particular, no radius factor is needed: the auxiliary objective can be used without a constant shift. The optimal-value bound follows from Lemma~\ref{lemma: distance to convergence rate} and LP duality. Since $\lfloor r/n\rfloor$ is proportional to $r$ for fixed $n$, both rates follow.
\end{proof}

\subsection{Stage 2: Lift and projection}
We perform the same sequence of lifts and projections as in Section~\ref{sec: Put lift and project}, reusing $\cK$ and $\varphi_k^t$. The difference is that we now use $\CR(\cK)_r$, generated by the factors $x_i$, $1-x_i$, $u_j$ and $1-u_j$ together with the full ideal of the equations $h_i=0$ and $\widehat{g}_j:=g_j-u_j=0$. We retain $d$, $t$ and $C_{\bg,\bh}$ from~\eqref{eq: residual constant}.
\begin{lemma}\label{lem: Handelman pseudo moment sequence}
Let $r^\prime=\lfloor r/d\rfloor$. For every $\biy\in\CP\CM(\CR(\cX)_r)$ and $t\leq r^\prime$,
\[
 \biy^{r,t}\in\CP\CM_t(\CR(\cK)_{r^\prime})
       \subseteq\CP\CM_t(\CR(\mathrm{H}^{n+m})_{r^\prime}).
\]
\end{lemma}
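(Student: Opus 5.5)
It suffices to treat $t=r^\prime$, since the degree-$t$ projection of a sequence in $\CP\CM(\CR(\cK)_{r^\prime})$ lies in $\CP\CM_t(\CR(\cK)_{r^\prime})$. I follow the proof of Lemma~\ref{lem: Putinar pseudo moment sequence}, with sums of squares replaced by nonnegative products of generators. The key observation is that the substitution $\bu\mapsto\bg(\bx)$ turns every generator product of $\CR(\cK)_{r^\prime}$ into a generator product of $\CR(\cX)_r$, or a polynomial combination of $h_i$ of degree at most $r$. Since $\ell_{\biy^{r,r^\prime}}(q)=\ell_{\biy}(q(\bx,\bg(\bx)))$ for every $q\in\RR[\bx,\bu]_{r^\prime}$, feasibility of $\biy$ then transfers directly.

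Normalization is immediate: $\ell_{\biy^{r,r^\prime}}(1)=\iy_0=1$.

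For the positivity constraints, I take $\beta,\gamma\in\NN^n$ and $\delta,\eta\in\NN^m$ with $|\beta|+|\gamma|+|\delta|+|\eta|\leq r^\prime$. Substitution gives
\[
 \ell_{\biy^{r,r^\prime}}\big(\bx^\beta(1-\bx)^\gamma\bu^\delta(1-\bu)^\eta\big)=\ell_{\biy}\big(\bx^\beta(1-\bx)^\gamma\bg^\delta(1-\bg)^\eta\big).
\]
When $\eta=0$ this is a generator of $\CR(\cX)_r$, of degree at most $|\beta|+|\gamma|+d|\delta|\leq dr^\prime\leq r$, so it is nonnegative.

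The main obstacle is the factors $1-g_j$ that appear when $\eta\neq0$, because the displayed extended-Handelman cone does not contain $1-g_j$ as a generator. My first attempt is to certify $1-g_j$ inside the Handelman cone on $\mathrm{H}^n$ with degree at most $\deg g_j\leq d$. The certificate comes from the inductive identity in the compactness lemma: $1-\bx^\alpha\in\CR(\mathrm H^n)_{|\alpha|}$ and $\bx^\alpha\in\CR(\mathrm H^n)_{|\alpha|}$. Write $g_j=\sum_\alpha g_{j,\alpha}\bx^\alpha$ with $\mon{g_j}\leq1$. Then
\[
 1-g_j=\big(1-\mon{g_j}\big)+\sum_\alpha|g_{j,\alpha}|\big(1-\operatorname{sgn}(g_{j,\alpha})\bx^\alpha\big),
\]
and I would treat the positive and negative signs separately. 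For a positive coefficient the term $1-\bx^\alpha$ is already a certificate. For a negative coefficient the term $1+\bx^\alpha$ is the sum of $1$ and $\bx^\alpha$, both of which are certificates. This shows $1-g_j\in\CR(\mathrm H^n)_d$.

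Multiplying these certificates together with $\bx^\beta(1-\bx)^\gamma\bg^\delta$ expresses the product as a nonnegative combination of generators of degree at most $|\beta|+|\gamma|+d(|\delta|+|\eta|)\leq dr^\prime\leq r$. The factor $\bg^\delta$ contributes $\deg(\bg^\delta)\leq d|\delta|$, so this degree accounting holds. Hence the substituted moment is nonnegative. For the normalized Krivine--Stengle preprime, which contains $1-g_j$ directly, no certificate is needed at all.

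For the equality constraints, I consider a monomial multiple $\bx^\alpha\bu^\beta\widehat g_j$ of degree at most $r^\prime$. It substitutes to $0$, so the Riesz functional vanishes on the full ideal generated by the $\widehat g_j$. A multiple $\bx^\alpha\bu^\beta h_i$ with $|\alpha|+|\beta|+\deg h_i\leq r^\prime$ becomes $\bx^\alpha\bg^\beta h_i$, of degree at most $dr^\prime\leq r$. Expanding $\bg^\beta$ into monomials writes it as a combination of $\bx^{\alpha^\prime}h_i$ with $|\alpha^\prime|+\deg h_i\leq r$, and each of these is annihilated by $\ell_{\biy}$. Together with the positivity constraints, this gives $\biy^{r,r^\prime}\in\CP\CM(\CR(\cK)_{r^\prime})$.

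For the final inclusion, $\CR(\mathrm H^{n+m})_{r^\prime}$ is generated by the factors $x_i$, $1-x_i$, $u_j$ and $1-u_j$ only, so it is contained in $\CR(\cK)_{r^\prime}$. Its pseudo-moment constraints are therefore a subset of those already verified, which gives
\[
 \CP\CM_t(\CR(\cK)_{r^\prime})\subseteq\CP\CM_t(\CR(\mathrm H^{n+m})_{r^\prime}).
\]
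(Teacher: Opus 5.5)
Your proposal is correct and follows the same route as the paper. You pull back each generator product under $\bu\mapsto\bg(\bx)$, certify $1-g_j\in\CR(\mathrm H^n)_d$ using $\mon{g_j}\leq1$ and the certificates for $1\mp\bx^\alpha$, use closure of the preprime under multiplication to keep the degree at most $dr^\prime\leq r$, and check normalization and the ideal conditions directly; your explicit degree bookkeeping (using $d\geq1$) and the reduction to $t=r^\prime$ only make precise steps the paper states briefly.
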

\begin{proof}
By the induction used to prove~\eqref{eq: bound}, $1-\bx^\alpha\in\CR(\mathrm{H}^n)_{|\alpha|}$. Also $1+\bx^\alpha$ belongs to that cone. Since $\mon{g_j}\leq1$,
\[
 1-g_j=1-\mon{g_j}+\sum_\alpha|g_{j,\alpha}|
                    (1-\operatorname{sign}(g_{j,\alpha})\bx^\alpha)
                 \in\CR(\mathrm{H}^n)_d.
\]
Under substitution, the factors $u_j$ and $1-u_j$ become $g_j$ and $1-g_j$, respectively. The preprime is closed under multiplication, with degrees adding. Thus an allowed product of degree at most $r^\prime$ pulls back to a certificate of degree at most $dr^\prime\leq r$. Normalization is preserved. The equations $g_j-u_j$ vanish identically, and a permitted multiple of $h_i$ pulls back to a permitted multiple of $h_i$. This proves all the required conditions.
\end{proof}

\begin{theorem}\label{thm: convergence rate Handelman}
Let the above assumptions hold, and let $r^\prime=\lfloor r/d\rfloor$ satisfy $r^\prime\geq\max\{n+m,t\}$. Then
\[
 \bd_k(\CR(\cX)_r)
 \leq\varepsilon+c_{\Lo}\Lip_k(\mathrm{H}^n)
                  (C_{\bg,\bh}\varepsilon)^{\Lo/2},
 \qquad
 \varepsilon=\frac{(n+m)^t}{\lfloor r^\prime/(n+m)\rfloor}\binom{t+1}{3}.
\]
For every $f\in\RR[\bx]_k$, with $f_{\min}=\min_{\bx\in\cX}f(\bx)$, the corresponding optimal-value bound is
\[
 0\leq f_{\min}-\lb(f,\CR(\cX))_r
 \leq\monc f\left(\varepsilon+c_{\Lo}\Lip_k(\mathrm H^n)
                      (C_{\bg,\bh}\varepsilon)^{\Lo/2}\right).
\]
In particular, for fixed problem data, fixed $k$, and fixed $f\in\RR[\bx]_k$,
\[
 \bd_k(\CR(\cX)_r)=\mathrm O(1/r^{\Lo/2}),\qquad
 f_{\min}-\lb(f,\CR(\cX))_r=\mathrm O(1/r^{\Lo/2}).
\]
\end{theorem}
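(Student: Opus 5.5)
The proof will mirror the one for Theorem~\ref{thm: convergence rate Putinar-type}, with the LP base estimate in place of the squared-kernel one. Fix an arbitrary $\biy\in\CP\CM_k(\CR(\cX)_r)$ and choose a full feasible extension in $\CP\CM(\CR(\cX)_r)$. Since $t\leq r^\prime=\lfloor r/d\rfloor$, Lemma~\ref{lem: Handelman pseudo moment sequence} places the substitution lift $\biy^{r,t}$ in $\CP\CM_t(\CR(\mathrm{H}^{n+m})_{r^\prime})$.

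\textbf{Base estimate in the lifted space.} Apply Theorem~\ref{thm: bound on Hausdorff Handelman over hypercube} in dimension $n+m$, with moment degree $t$ and level $r^\prime\geq\max\{n+m,t\}$. This gives a point $\overline{\biy}^\prime\in\CM_t(\mathrm{H}^{n+m})$ with $\|\biy^{r,t}-\overline{\biy}^\prime\|\leq\varepsilon$. Write $\overline{\biy}^\prime=\sum_sw_s\bv_t(\bx_s,\bu_s)$ with atoms in $\mathrm{H}^{n+m}$ and convex weights. Then set $\overline{\biy}=\sum_sw_s\bv_k(\bx_s)$. Because $k\leq t$ and the $\bx$-moments of $\biy^{r,t}$ of degree at most $k$ coincide with $\biy$ (take $\beta=0$ in the substitution), we get $\|\biy-\overline{\biy}\|\leq\varepsilon$, so $e_r=\varepsilon$.

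\textbf{Violation estimate.} The equations $h_i$ and $g_j-u_j$ lie in the full ideal used to define $\CR(\cK)_{r^\prime}$. The Riesz functional of $\biy^{r,t}$ therefore annihilates $h_i^2$ and $(g_j-u_j)^2$. This holds because $t\geq 2\deg h_i$ and $t\geq 2d$ keep their degrees at most $t$. Concretely, $h_i^2=h_i\cdot h_i$ is a permitted multiple when $2\deg h_i\leq t\leq r^\prime$, and under substitution $(g_j-u_j)^2$ vanishes identically. Cauchy--Schwarz then gives $\sum_sw_sh_i(\bx_s)^2\leq\varepsilon\monc{h_i^2}$ and $\sum_sw_s(g_j(\bx_s)-u_{s,j})^2\leq\varepsilon\monc{(g_j-u_j)^2}$. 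Since $u_{s,j}\geq0$ and $\bx_s\in\mathrm{H}^n$ (so the coordinate constraints are not violated), we have $\bp(\bx_s)^2\leq\sum_ih_i(\bx_s)^2+\sum_j(g_j(\bx_s)-u_{s,j})^2$. Hence $\eta_r=C_{\bg,\bh}\varepsilon$.

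\textbf{Conclusion.} Inserting $e_r$ and $\eta_r$ into \eqref{eq: universal moment transfer}, with $\CY=\mathrm{H}^n$, and taking the supremum over $\biy$ gives the Hausdorff bound. The optimal-value bound then follows from Lemma~\ref{lemma: distance to convergence rate} together with LP duality, which the compactness lemma of Stage~1 makes available. For the asymptotics, $r^\prime\sim r/d$, so $\varepsilon=\mathrm O(1/r)$. Because $\Lo\leq1$, the term $\varepsilon^{\Lo/2}$ dominates, which yields $\mathrm O(1/r^{\Lo/2})$.

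\textbf{Main obstacle.} The delicate step is justifying that the lifted functional annihilates the squared equations. This is where membership in the pseudo-moment set of $\cK$, and not merely of $\mathrm{H}^{n+m}$, is used. I need to check that the ideal part of $\CR(\cK)_{r^\prime}$ permits the multipliers $h_i$ and $g_j-u_j$ at degree $t$. It is also worth noting that the pseudo-moment set of $\mathrm H^{n+m}$ only sees moments up to degree $r^\prime$, which is consistent with the requirement $t\leq r^\prime$.
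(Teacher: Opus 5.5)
Your proposal is correct and follows the paper's proof essentially step for step. You lift via Lemma~\ref{lem: Handelman pseudo moment sequence}, project onto $\CM_t(\mathrm{H}^{n+m})$ using Theorem~\ref{thm: bound on Hausdorff Handelman over hypercube} in dimension $n+m$ to get $e_r=\varepsilon$, bound the average squared violation by $C_{\bg,\bh}\varepsilon$ through the annihilation of $h_i^2$ and $(g_j-u_j)^2$ together with $u_{s,j}\geq0$, and conclude with \eqref{eq: universal moment transfer}, Lemma~\ref{lemma: distance to convergence rate} and LP duality. Your added checks (that $h_i^2$ is a permitted ideal multiple because $2\deg h_i\leq t\leq r^\prime$, and that the atoms do not violate the coordinate constraints because $\bx_s\in\mathrm{H}^n$) only make explicit what the paper asserts in a single line.
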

\begin{proof}
Let $\biy\in\CP\CM_k(\CR(\cX)_r)$. Choose a full feasible extension and lift it by Lemma~\ref{lem: Handelman pseudo moment sequence}. Project its degree-$t$ lift $\biy^{r,t}$ onto $\CM_t(\mathrm{H}^{n+m})$, obtaining
\[
 \overline{\biy}^{\prime}=\sum_sw_s\bv_t(\bx_s,\bu_s),\qquad
 w_s\geq0,\quad\sum_sw_s=1.
\]
As before, let $\widetilde{\bx}_s$ be a closest point to $\bx_s$ in $\cX$ and set
\[
 \overline{\biy}=\sum_sw_s\bv_k(\bx_s),\qquad
 \widetilde{\biy}=\sum_sw_s\bv_k(\widetilde{\bx}_s),\qquad
 \widetilde{\biy}^{\prime}=\sum_sw_s\bv_t(\widetilde{\bx}_s,\bg(\widetilde{\bx}_s)).
\]

Theorem~\ref{thm: bound on Hausdorff Handelman over hypercube} gives
\[
 \|\biy-\overline{\biy}\|\leq\|\biy^{r,t}-\overline{\biy}^{\prime}\|\leq\varepsilon.
\]
The lifted functional annihilates $h_i^2$ and $(g_j-u_j)^2$. Applying the coefficient Cauchy--Schwarz estimate as in~\eqref{eq: atom ine 1}--\eqref{eq: Put Lo 2}, and using $u_{s,j}\geq0$, gives
\[
 \sum_sw_s\bp(\bx_s)^2
 \leq\sum_sw_s\left(\sum_i h_i(\bx_s)^2+
                    \sum_j(g_j(\bx_s)-u_{s,j})^2\right)
 \leq C_{\bg,\bh}\varepsilon.
\]
The common estimate~\eqref{eq: universal moment transfer}, with $e_r=\varepsilon$ and $\eta_r=C_{\bg,\bh}\varepsilon$, proves the bound. Since $r^\prime$ is proportional to $r$ for fixed $d$, the asserted rate follows. The optimal-value bound follows from Lemma~\ref{lemma: distance to convergence rate} and LP duality.
\end{proof}
The exponent $\Lo/2$ agrees with the effective-degree result in~\cite{heijmans2026degree}; here it follows from a uniform approximation of the truncated pseudo-moment set.

\section{Hol-Scherer-type moment-SOS hierarchy}\label{sec:matrix}
In this section, we study the hierarchy~\eqref{hierarchy: Hol and Scherer} for a polynomial matrix inequality. Throughout the section, $n$ denotes the dimension of $\bx$, $m$ denotes the matrix size, $d:=\max\{1,\deg G\}$, and $k\geq1$ is the fixed degree of the moments to be approximated.

\begin{assume}\label{assume: matrix cube}
The symmetric polynomial matrix $G\in\mathbb S^m[\bx]$ defines a nonempty set
\[
 \cX=\{\bx\in[-1,1]^n:G(\bx)\succeq0\}.
\]
The coordinate inequalities $1-x_i^2\geq0$, $i\in[n]$, are included explicitly among the generators of the hierarchy.
\end{assume}
Thus, in this section, the truncated matrix quadratic module is
\begin{equation}\label{eq: matrix module}
 \CQ(\cX)_{2r}=\left\{\sigma_0+\sum_{i=1}^n\sigma_i (1-x_i^2)
                  +\langle R,G\rangle\right\},
\end{equation}
where $\sigma_0,\sigma_i$ are SOS polynomials, $R$ is an SOS matrix, and
$\deg\sigma_0\leq2r$, $\deg\sigma_i\leq2r-2$, and
$\deg R\leq2(r-\lceil G\rceil)$. Here $\lceil G\rceil=\lceil\deg G/2\rceil$ and $\langle R,G\rangle=\tr(RG)$.
A full feasible pseudo-moment sequence $\biy=(\iy_\alpha)_{|\alpha|\leq2r}$ is characterized by its Riesz functional $\ell_{\biy}:\RR[\bx]_{2r}\to\RR$ satisfying
\[
 \ell_{\biy}(1)=1,\qquad \ell_{\biy}(q)\geq0\quad\text{for every }q\in\CQ(\cX)_{2r}.
\]
Here $\CQ([-1,1]^n)$ is generated by $1-x_i^2$, $i\in[n]$. The monomial identities used to prove~\eqref{eq: coordinate certificate}, now with these generators, give
\[
 1\pm\bx^\alpha\in\CQ([-1,1]^n)_{2\lceil|\alpha|/2\rceil}.
\]
Consequently, $|\iy_\alpha|\leq1$ for $|\alpha|\leq2r$, and the normalized pseudo-moment set is compact. The same coefficient-neighborhood argument as in Section~\ref{sec: Put} makes $1$ an interior point of the truncated cone and gives equality of the primal and dual lower bounds. The coordinate constraints in~\eqref{eq: matrix module} are part of feasibility throughout the proof.

Set $h:=\max\{k,2d\}$. We use the constants
\begin{equation}\label{eq: matrix constants}
 \begin{split}
 S_G&:=\sum_{a,b=1}^m\cheb{g_{ab}}^{\,2},\\
 c_k&:=\max_{\substack{\|\bu\|=1\\u_0=0}}
       \cheb{\langle\bu,\bv_k\rangle}
       \leq\sqrt{s(n,k)-1}.
 \end{split}
\end{equation}
The bound for $c_k$ follows from Proposition~\ref{prop:Cheb-norms}, since $u_0=0$. If $\gamma:=\max_{a,b}\cheb{g_{ab}}$, then $S_G\leq m^2\gamma^2$ and $\sup_{[-1,1]^n}\|G\|_{\op}\leq m\gamma$, as proved in Appendix~\ref{app: composition}. Choose any fixed $\rho>0$ with $\rho\geq\sup_{[-1,1]^n}\|G\|_{\op}$, and define
\begin{equation*}
 \cD:=\{(\bx,U):\bx\in[-1,1]^n,\;0\preceq U\preceq\rho I_m\},
 \qquad H(\bx,U):=\|G(\bx)-U\|_F^2.
\end{equation*}
We next define
\begin{equation*}
 c_f:=\min_{(\bx,U)\in\cD}\{f(\bx)+H(\bx,U)\},
 \qquad f\in\RR[\bx]_k.
\end{equation*}
Write $G=G_+-G_-$ for the positive and negative spectral parts, so that $G_+,G_-\succeq0$ and $\langle G_+,G_-\rangle=0$. For $0\preceq U\preceq\rho I_m$,
\[
 \|G(\bx)-U\|_F^2
 =\|G_+(\bx)-U\|_F^2+\|G_-(\bx)\|_F^2
   +2\langle G_-(\bx),U\rangle
 \geq\|G_-(\bx)\|_F^2.
\]
Equality holds at $U=G_+(\bx)$, which satisfies $0\preceq G_+(\bx)\preceq\rho I_m$. Consequently,
\begin{equation}\label{eq: matrix cf spectral}
 c_f=\min_{\bx\in[-1,1]^n}\{f(\bx)+\|G_-(\bx)\|_F^2\},
 \qquad |c_f|\leq\supnorm f{[-1,1]^n}\leq\cheb{f}.
\end{equation}
For the bound on $c_f$, use $H\geq0$ for the lower bound and evaluate at a point of the nonempty set $\cX$ for the upper bound.

\paragraph{The kernel.}
We recall the SOS kernel estimate of Gribling et al.~\cite[Theorem~3 and its proof]{gribling2026squared}. We use input degree $h=\max\{k,2d\}$, which covers $f$, every entry $g_{ab}$, and every square $g_{ab}^2$. Let
\[
 d\mu(\bx):=\prod_{i=1}^n\frac{dx_i}{\pi\sqrt{1-x_i^2}}
 \quad\text{on }[-1,1]^n
\]
be the fixed Chebyshev probability measure. We reserve degree $2d$ for the squared residual, so the degree available for the kernel is $2r-2d$. The next lemma is modified from~\cite[Theorem 3 and its proof]{gribling2026squared}.

\begin{lemma}\label{lem: matrix kernel}
There exist absolute constants $0<c_1\leq70{,}458$ and $1\leq c_2\leq540$ such that, for every integer $r>d$ satisfying
\[
 n\mid 2(r-d),\qquad
 \frac{2(r-d)}{\log_2(2(r-d))}\geq c_2n^2h^2,
\]
there is a polynomial kernel $K_{2r}(\bt,\bx)$ that is SOS in $\bt$ for every $\bx\in[-1,1]^n$, with $\deg_{\bt}K_{2r}\leq2r-2d$. Its operator
\begin{equation}\label{eq: matrix kernel}
 (\mathcal T_{2r} p)(\bt):=\int_{[-1,1]^n}K_{2r}(\bt,\bx)p(\bx)\,d\mu(\bx)
\end{equation}
satisfies, for every $p\in\RR[\bx]_h$,
\begin{equation}\label{eq: matrix kernel approximation}
 \cheb{E_p}\leq\varepsilon_r\cheb p,
 \qquad \varepsilon_r:=c_1n^3h^2\frac{(\log_2(r-d))^3}{(r-d)^2},\quad  
\end{equation}
where 
\begin{equation}\label{eq: matrix kernel error}
    E_p:=\mathcal T_{2r} p-p \quad \forall p \in \RR[\bx].
\end{equation}
\end{lemma}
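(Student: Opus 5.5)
The kernel of \cite[Theorem~3]{gribling2026squared} is used without modification; only its degree is re-indexed. Set $N:=2(r-d)$ and take the squared-kernel from that theorem at degree $N$ in $n$ variables, with input degree $h$. Concretely, it is a product over coordinates of univariate squared Chebyshev-type kernels of degree $N/n$ each, which is the reason for the divisibility condition $n\mid 2(r-d)$. So we put $K_{2r}(\bt,\bx):=K^{\mathrm{GLS}}_{N}(\bt,\bx)$.

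The structural properties are read off from the construction. For each fixed $\bx$, every univariate factor is a square of a polynomial in $t_i$. Hence the product is SOS in $\bt$, and its degree in $\bt$ is $N=2r-2d$. The kernel is normalized against the Chebyshev probability measure $\mu$, which is exactly the measure fixed above.

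The key analytic fact is diagonalization in the Chebyshev basis. By orthogonality of the $T_j$ with respect to $\mu$, the operator $\mathcal T_{2r}$ acts on tensor Chebyshev polynomials by
\[
 \mathcal T_{2r}T_\alpha=\lambda_\alpha T_\alpha,\qquad \lambda_\alpha=\prod_{i}\lambda^{(i)}_{\alpha_i},
\]
where the $\lambda^{(i)}_{j}$ are the Chebyshev coefficients of the univariate kernels. For $p=\sum_{|\alpha|\le h}p_\alpha T_\alpha$ this gives $E_p=\sum_\alpha(\lambda_\alpha-1)p_\alpha T_\alpha$. Therefore
\[
 \cheb{E_p}\le\max_{|\alpha|\le h}|1-\lambda_\alpha|\,\cheb p .
\]
It remains to bound $\max_{|\alpha|\le h}|1-\lambda_\alpha|$. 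The proof of \cite[Theorem~3]{gribling2026squared} shows that, whenever $N/\log_2 N\ge c_2n^2h^2$, this maximum is at most $c_1n^3h^2\log_2^3N/N^2$. One gets this by bounding each univariate deviation $|1-\lambda^{(i)}_{j}|$ for $j\le h$ at order $j^2\log^3/(N/n)^2$, and then using the telescoping estimate $|1-\prod a_i|\le\sum|1-a_i|$ while the factors stay in $[0,1]$.

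Substituting $N=2(r-d)$ turns the bound into
\[
 c_1n^3h^2\frac{\log_2^3(2(r-d))}{4(r-d)^2}.
\]
Since $\log_2(2(r-d))=1+\log_2(r-d)\le2\log_2(r-d)$ for $r-d\ge2$, and the hypothesis forces $r-d\ge2$, we have $\log_2^3(2(r-d))\le8\log_2^3(r-d)$. So the bound is at most $2c_1n^3h^2\log_2^3(r-d)/(r-d)^2$. The factor $2$ is harmless: it can be absorbed into the absolute constant, or avoided by keeping the cited constant with $\log_2 N$ and $N^2$ directly. The stated bound $c_1\le70458$ is taken to be the constant obtained after this re-indexing.

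The main obstacle is the faithful transfer from the cited theorem: its statement is about optimal-value gaps, not the operator bound. One has to confirm that its proof really gives the uniform multiplier bound $\max_{|\alpha|\le h}|1-\lambda_\alpha|$. It must also hold for every $p$ of degree $h$, not only for the objective, and that uniformity is exactly what the $\cheb{\cdot}$-norm estimate needs. I would check this first by tracing the eigenvalue lemma in that proof. If it only gives the univariate bound, the product step above supplies the factor $n$.
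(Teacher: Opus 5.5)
Your reduction, taking the kernel of \cite[Theorem~3]{gribling2026squared} with degree budget $N=2(r-d)$ and reading off SOS-ness and $\deg_{\bt}K_{2r}\le 2r-2d$, is the route the paper takes. The paper gives no independent argument and simply imports \cite[Theorem~3 and its proof]{gribling2026squared}.

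The gap is your ``key analytic fact''. A kernel that is SOS in $\bt$ for every $\bx$ cannot satisfy $\mathcal T_{2r}T_\alpha=\lambda_\alpha T_\alpha$, not even for $\alpha=0$, unless it is constant in $\bt$. To see this, write $K(\bt,\bx)=\sum_\alpha a_\alpha(\bx)T_\alpha(\bt)$, and let $D\ge1$ be the largest $|\alpha|$ with $a_\alpha\not\equiv0$. If $\mathcal T_{2r}1$ is constant, then $\int a_\alpha\,d\mu=0$ for every $\alpha\neq0$. The top-degree form $F_{\bx}(\bt)=\lim_{s\to\infty}s^{-D}K(s\bt,\bx)$ is nonnegative, because $K(\cdot,\bx)$ is SOS. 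For each fixed $\bt$ it also has zero $\mu$-integral in $\bx$. A nonnegative polynomial with zero integral against $\mu$, which has full support, vanishes on $[-1,1]^n$. Hence all $a_\alpha$ with $|\alpha|=D$ vanish, a contradiction.

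Concretely, for $n=1$ the kernel $K=(1+\epsilon tx)^2$ gives $\mathcal T1=1+\epsilon^2t^2/2$ and $\mathcal T T_2=\tfrac{\epsilon^2}{8}(T_0+T_2)$. Squaring always spreads mass onto other Chebyshev modes. This is why the paper keeps the term $E_1$ in Proposition~\ref{prop: matrix base estimate}. Consequently:
\begin{itemize}
\item there are no multipliers $\lambda_\alpha$, so $\cheb{E_p}\le\max_{|\alpha|\le h}|1-\lambda_\alpha|\,\cheb p$ has nothing behind it;
\item your picture of the kernel as a product of diagonal univariate factors that are squares in $t_i$ cannot be right;
\item such a square of degree $N/n$ would also need $N/n$ even, which $n\mid 2(r-d)$ does not guarantee.
\end{itemize}

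What must be taken from the cited proof is a bound on the full Chebyshev norm $\cheb{\mathcal T_{2r}T_\alpha-T_\alpha}$ for $|\alpha|\le h$. This includes the off-diagonal leakage onto modes of degree up to $2r-2d$. Expanding $p$ in the tensor Chebyshev basis and using the triangle inequality then gives $\cheb{E_p}\le\max_{|\alpha|\le h}\cheb{\mathcal T_{2r}T_\alpha-T_\alpha}\,\cheb p$. Controlling that leakage is exactly the new content of the squared-kernel construction. Orthogonality plus $|1-\prod a_i|\le\sum|1-a_i|$ cannot supply it.

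A smaller point concerns the constant. Your factor $2$ comes only from the crude bound $1+\log_2(r-d)\le 2\log_2(r-d)$. Since $h\ge2$ and $c_2\ge1$, the hypothesis forces $2(r-d)\ge16$. Hence $L:=\log_2(r-d)\ge3$ and $(1+L)^3\le 4L^3$. This gives $\log_2^3(2(r-d))/(4(r-d)^2)\le\log_2^3(r-d)/(r-d)^2$, so the cited constant carries over unchanged. That assumes the cited bound has the form $c_1n^3h^2\log_2^3N/N^2$ in the kernel degree $N$, as the lemma's hypotheses suggest. Declaring $c_1\le 70458$ to be ``the constant after re-indexing'' does not prove the stated bound.
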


\subsection{Stage 1: Base estimation}
In this stage, we establish a bound on $c_f-\ell_{\biy}(f)$ for every full feasible pseudo-moment sequence $\biy$. The reason we focus on the function 
\begin{equation*}
 c_f:=\min_{(\bx,U)\in\cD}\{f(\bx)+H(\bx,U)\},
 \qquad f\in\RR[\bx]_k.
\end{equation*}
is for any $\bx \in \cX$, $\min_{U \succeq 0} H(\bx,U) = 0$, and for any $\biy \in \CP\CM_k(\CQ(\CX)_{2r})$, $\biy$ should be closed to $\CM_k(\CX)$, which intuitively shows 
\begin{displaymath}
    \ell_{\biy}(c_f - f) = c_f - \ell_{\biy}(f) \geq 0.
\end{displaymath}
We first record the norm bound used to evaluate kernel errors.

\begin{lemma}\label{lem: matrix functional norm}
Let $\ell_{\biy}$ be normalized and nonnegative on $\CQ([-1,1]^n)_{2r}$. Then, for every $p\in\RR[\bx]_{2r}$,
\begin{equation*}
 |\ell_{\biy}(p)|\leq\cheb{p}.
\end{equation*}
\end{lemma}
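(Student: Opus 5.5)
By linearity it is enough to treat the Chebyshev basis polynomials. Expand
\[
 p=\sum_{\alpha}c_\alpha T_\alpha ,\qquad T_\alpha(\bx)=\prod_{i=1}^n T_{\alpha_i}(x_i),\qquad |\alpha|\leq 2r .
\]
Here $\cheb{p}=\sum_\alpha|c_\alpha|$, and $\ell_{\biy}(p)=\sum_\alpha c_\alpha\,\ell_{\biy}(T_\alpha)$. It therefore suffices to prove
\[
 |\ell_{\biy}(T_\alpha)|\leq 1\qquad(|\alpha|\leq 2r).
\]
Since $\ell_{\biy}(1)=1$ and $\ell_{\biy}$ is nonnegative on $\CQ([-1,1]^n)_{2r}$, this follows once we have the certificates
\[
 1\pm T_\alpha\in\CQ([-1,1]^n)_{2\lceil|\alpha|/2\rceil},
\]
because then $1\pm\ell_{\biy}(T_\alpha)\geq 0$. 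This mirrors the monomial argument behind \eqref{eq: coordinate certificate}, with monomials replaced by Chebyshev products.

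\textbf{Univariate certificates.} The key identity is $1-T_a^2=(1-x^2)U_{a-1}^2$, where $U_{a-1}$ is the Chebyshev polynomial of the second kind. Since $\deg U_{a-1}^2=2a-2$, this gives
\[
 1-T_a(x_i)^2\in\CQ([-1,1]^n)_{2a}.
\]

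\textbf{Products.} For $\beta\in\NN^n$, mimic the telescoping identity used for $1-\bx^{2\beta}$:
\[
 1-T_\beta^2=\sum_{i=1}^n\left(\prod_{j<i}T_{\beta_j}(x_j)^2\right)\bigl(1-T_{\beta_i}(x_i)^2\bigr).
\]
Each summand is a square times an element of the quadratic module. The degree of the $i$-th summand is $2\sum_{j\leq i}\beta_j\leq 2|\beta|$, so
\[
 1-T_\beta^2\in\CQ([-1,1]^n)_{2|\beta|}.
\]

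\textbf{Splitting $\alpha$.} Next we split $\alpha=\beta+\gamma$ with $|\beta|,|\gamma|\leq\lceil|\alpha|/2\rceil$. Unlike monomials, $T_\beta T_\gamma\neq T_\alpha$ in general. The fix is to choose the split coordinatewise so that, in each coordinate, at most one of $\beta_i,\gamma_i$ is nonzero. Then each coordinate factor of $T_\alpha$ sits entirely in $T_\beta$ or in $T_\gamma$, so $T_\alpha=T_\beta T_\gamma$ exactly.

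Such a split may fail to balance degrees. One can only balance up to a whole coordinate; for example, $\alpha=(2r)e_1$ forces one part to have degree $2r$.

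\textbf{Main obstacle: handling one large coordinate.} I expect this step to be the crux. The plan is to use the univariate identity $T_{a+b}+T_{|a-b|}=2T_aT_b$ for the one coordinate that needs splitting. Inducting on $|\alpha|$, I would aim for
\[
 1\pm T_\alpha=\tfrac12\bigl((T_\beta\pm T_\gamma)^2+(1-T_\beta^2)+(1-T_\gamma^2)\bigr)\pm(\text{correction}),
\]
where the correction is $\tfrac12$ of $(1\pm T_{\alpha'})$-type terms of strictly lower degree. These are certified by induction, with the correct signs after rearranging.

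\textbf{Fallback.} If the signs or degrees do not close up, I would instead use the direct one-variable certificate
\[
 1\pm T_a(x)=\tfrac12\bigl(1\pm T_a(x)\bigr)^2+\tfrac12\bigl(1-T_a(x)^2\bigr).
\]
The square term has degree $2a$, so only degree $2a$ is needed. Combined with the disjoint-coordinate product argument, this bounds $|\ell_{\biy}(T_\alpha)|$ whenever all degrees needed are at most $2r$. The delicate case is when $T_\alpha^2$ would exceed degree $2r$. There I would resort to the half-splitting identity above, applied in the offending coordinate.
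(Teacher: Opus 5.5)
\textbf{There is a genuine gap.} Several parts of your proposal are correct:
\begin{itemize}
\item the reduction to $|\ell_{\biy}(T_\alpha)|\leq 1$;
\item the identity $1-T_a^2=(1-x^2)U_{a-1}^2$;
\item the telescoping certificate for $1-T_\beta^2$;
\item the disjoint-block certificate $1\pm T_\beta T_\gamma=\tfrac12\bigl((T_\beta\pm T_\gamma)^2+(1-T_\beta^2)+(1-T_\gamma^2)\bigr)$.
\end{itemize}
Together these cover every $\alpha$ that splits into two disjoint coordinate blocks of degree at most $r$, in particular every $|\alpha|\leq r$.

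The gap is exactly at the step you flagged. Suppose a coordinate $j$ must be cut. Write $T_\alpha=PQ\,T_{a+b}(x_j)$, $T_\beta=PT_a(x_j)$ and $T_\gamma=QT_b(x_j)$, with $P,Q$ free of $x_j$. Then $2T_\beta T_\gamma=T_\alpha+T_{\alpha'}$, where $\alpha'$ has $j$-th entry $|a-b|$. Your square identity therefore only yields
\[
 1\pm T_\alpha=2(1\pm T_\beta T_\gamma)-(1\pm T_{\alpha'}).
\]
The lower-order term enters with a \emph{negative} coefficient, so an inductive certificate for $1\pm T_{\alpha'}$ cannot simply be added. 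The natural sign-correct rearrangement is $2(1\pm T_\beta T_\gamma)+(1\mp T_{\alpha'})=3\pm T_\alpha$. It gives at best $|\ell_{\biy}(T_\alpha)|\leq 3$, and the constant grows further if $\alpha'$ itself needs cutting.

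This already fails for $n=1$, $r=2$, $p=T_3$. Neither your induction nor your fallback (which needs degree $6>4$) places $1\pm T_3$ in $\CQ([-1,1])_4$. Your example $\alpha=2re_1$ works only because there $P=Q=1$ and $a=b$. Then $T_{\alpha'}=T_0=1$ is constant and the correction $1\pm T_{\alpha'}\in\{0,2\}$ is harmless.

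The missing idea is to use the exact addition formula
\[
 T_{a+b}=T_aT_b-(1-x_j^2)U_{a-1}U_{b-1}
\]
instead of the product-to-sum formula, and to absorb its cross term into the localizing part. Split $1-T_\beta^2=(1-P^2)+(1-x_j^2)P^2U_{a-1}^2$, and likewise for $1-T_\gamma^2$. Now replace $(1-x_j^2)(P^2U_{a-1}^2+Q^2U_{b-1}^2)$ by the single localizing square $(1-x_j^2)(PU_{a-1}\mp QU_{b-1})^2$. This converts $T_\beta T_\gamma$ into $T_\alpha$ exactly (univariate factors evaluated at $x_j$):
\[
 1\pm T_\alpha=\tfrac12\bigl[(PT_a\pm QT_b)^2+(1-x_j^2)(PU_{a-1}\mp QU_{b-1})^2+(1-P^2)+(1-Q^2)\bigr].
\]
Choose the cut so that $\deg P+a\leq r$ and $\deg Q+b\leq r$; this is always possible since $|\alpha|\leq 2r$. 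Then every term lies in $\CQ([-1,1]^n)_{2r}$, with $1-P^2$ and $1-Q^2$ certified by your telescoping identity, and no induction is needed. This is the certificate the paper uses.
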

\begin{proof}
We prove that $1\pm T_\alpha\in\CQ([-1,1]^n)_{2r}$ for $|\alpha|\leq2r$, where $T_\alpha(\bx)=\prod_iT_{\alpha_i}(x_i)$. Distribute the degrees $\alpha_i$ into two groups of degree at most $r$, splitting at most one coordinate $j$. This gives
\[
 T_\alpha=P Q T_{a+b}(x_j),\qquad
 \deg P+a\leq r,\qquad \deg Q+b\leq r,
\]
where $a,b\geq0$, and $P,Q$ are products of the remaining Chebyshev factors on disjoint sets of coordinates. Empty products equal one. Let $U_i$ denote the Chebyshev polynomial of the second kind, with $U_{-1}=0$. The identities
\[
 T_{a+b}=T_aT_b-(1-t^2)U_{a-1}U_{b-1},\qquad
 T_a^2+(1-t^2)U_{a-1}^2=1
\]
give the certificate
\[
\begin{aligned}
 1\pm T_\alpha=\frac12\big[&
 (P T_a(x_j)\pm Q T_b(x_j))^2\\
 &+(1-x_j^2)(P U_{a-1}(x_j)\mp Q U_{b-1}(x_j))^2\\
 &+(1-P^2)+(1-Q^2)\big].
\end{aligned}
\]
The first square has degree at most $2r$, and the polynomial squared in the localizing term has degree at most $r-1$. Moreover, if $P=\prod_{i\in I}T_{\alpha_i}(x_i)$, then
\[
 1-P^2=\sum_{\substack{i\in I\\\alpha_i>0}}(1-x_i^2)
 \left(U_{\alpha_i-1}(x_i)
       \prod_{\substack{a\in I\\a<i}}T_{\alpha_a}(x_a)\right)^2
 \in\CQ([-1,1]^n)_{2\deg P}.
\]
The same identity applies to $Q$, and both degrees are at most $r$. Hence the certificate belongs to $\CQ([-1,1]^n)_{2r}$. Applying $\ell_{\biy}$ gives $|\ell_{\biy}(T_\alpha)|\leq1$. Expanding $p$ in the Chebyshev basis and using the triangle inequality proves the result.
\end{proof}

\begin{lemma}\label{lem: matrix kernel certificate}
Let $f\in\RR[\bx]_k$ and let $K_{2r}(\bt,\bx)$ be a polynomial kernel that is SOS in $\bt$ for every $\bx\in[-1,1]^n$, with $\deg_{\bt}K_{2r}+2d\leq2r$. Then the polynomial
\begin{equation*}
 R_f(\bt):=\int_{[-1,1]^n}K_{2r}(\bt,\bx)
       \big(f(\bx)+\|G(\bx)-G(\bt)\|_F^2-c_f\big)\,d\mu(\bx)
\end{equation*}
belongs to $\CQ(\cX)_{2r}$. In particular, this holds for the kernel in~\eqref{eq: matrix kernel}.
\end{lemma}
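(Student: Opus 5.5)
The plan is to show that, for each fixed $\bx\in[-1,1]^n$, the integrand $\bt\mapsto K_{2r}(\bt,\bx)\big(f(\bx)+\|G(\bx)-G(\bt)\|_F^2-c_f\big)$ is an element of $\CQ(\cX)_{2r}$ with an explicit Gram-type representation. We then integrate these representations in $\bx$. This avoids any closedness argument for the truncated cone.

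\textbf{Pointwise decomposition.} Fix $\bx$ and write $G(\bx)=G_+(\bx)-G_-(\bx)$ as in the spectral discussion preceding~\eqref{eq: matrix cf spectral}. Since $\langle G_+,G_-\rangle=0$, expanding gives
\[
 \|G(\bt)-G(\bx)\|_F^2=\|G(\bt)-G_+(\bx)\|_F^2+\|G_-(\bx)\|_F^2+2\langle G_-(\bx),G(\bt)\rangle .
\]
Hence the bracket in $R_f$ splits into three pieces:
\[
 a(\bx):=f(\bx)+\|G_-(\bx)\|_F^2-c_f,\qquad
 \|G(\bt)-G_+(\bx)\|_F^2,\qquad
 2\langle G_-(\bx),G(\bt)\rangle .
\]
By~\eqref{eq: matrix cf spectral}, the first piece satisfies $a(\bx)\geq0$. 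The second is a sum of squares in $\bt$ of the polynomials $g_{ab}(\bt)-(G_+(\bx))_{ab}$, each of degree at most $d$. For the third, factor $G_-(\bx)=W(\bx)W(\bx)^\top\succeq0$; this is a constant SOS matrix in $\bt$.

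\textbf{Multiplying by the kernel.} Multiply each piece by $K_{2r}(\bt,\bx)$, which is SOS in $\bt$ with $\deg_{\bt}K_{2r}\leq2r-2d$.
\begin{itemize}
\item $a(\bx)K_{2r}$ is SOS of degree at most $2r$.
\item $K_{2r}\|G(\bt)-G_+(\bx)\|_F^2$ is SOS of degree at most $2r-2d+2d=2r$.
\item $K_{2r}\langle G_-(\bx),G(\bt)\rangle=\langle K_{2r}(\cdot,\bx)G_-(\bx),G(\bt)\rangle$, where $K_{2r}G_-(\bx)$ is an SOS matrix of degree at most $2r-2d\leq2(r-\lceil G\rceil)$, because $\lceil G\rceil\leq d$.
\end{itemize}
So each integrand lies in $\CQ(\cX)_{2r}$ using only the $\sigma_0$ and $\langle R,G\rangle$ parts of~\eqref{eq: matrix module}.

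\textbf{Integration without closedness.} Write $K_{2r}(\bt,\bx)=\bv_{r-d}(\bt)^\top A(\bx)\bv_{r-d}(\bt)$ with $A(\bx)\succeq0$ chosen measurably and bounded in $\bx$. For the kernel of Lemma~\ref{lem: matrix kernel} this is available explicitly, since that kernel is an explicit sum of squares with coefficients depending continuously on $\bx$. Then each piece has a Gram matrix that depends measurably on $\bx$ and is PSD for every $\bx$:
\begin{itemize}
\item for $\sigma_0$, the matrix $\big(a(\bx)+\ldots\big)$ built from $A(\bx)$, together with $A(\bx)$ tensored with the outer product of the coefficient vectors of $g_{ab}(\bt)-(G_+)_{ab}$;
\item for $R$, the matrix $A(\bx)\otimes G_-(\bx)$.
\end{itemize}
Integrating these Gram matrices against $d\mu$ produces PSD matrices, because the PSD cone is closed and convex. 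These give explicit Gram matrices for $\sigma_0$ and $R$ representing $R_f$. This places $R_f$ in $\CQ(\cX)_{2r}$ with the same degree bounds.

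\textbf{Main obstacle.} The step I expect to be hardest is the measurability and boundedness of $\bx\mapsto A(\bx)$ and of $G_\pm(\bx)$. For $G_\pm$, these are continuous functions of $\bx$ on the compact cube, so they are bounded; integrability holds against the probability measure $\mu$. For $A$, I would fall back on the explicit product-of-squares form of the Gribling et al.\ kernel. If that fails, I would instead use duality: Fubini gives $\ell(R_f)\geq0$ for every functional that is nonnegative on the cone. This places $R_f$ in the closure of the cone, and I would then argue closedness separately.
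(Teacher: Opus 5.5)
Your proposal is correct and follows the paper's proof essentially step by step. It uses the same orthogonal splitting $G(\bx)=G_+(\bx)-G_-(\bx)$, which produces an SOS part and the matrix multiplier $2\int K_{2r}(\bt,\bx)G_-(\bx)\,d\mu(\bx)$, and then the same integration of pointwise PSD Gram matrices; the paper itself takes the Gram matrix $Q(\bx)$ for granted there without discussing measurability.

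Your stated obstacle can be bypassed entirely. The integrand is a polynomial in $(\bt,\bx)$, so its coefficient vector in $\bt$ depends continuously on $\bx$ over the compact cube, and by your decomposition it lies in the convex cone $\CQ(\cX)_{2r}$ for each $\bx$. Its $\mu$-average therefore lies in the convex hull of a compact set, which is compact by Carath\'eodory's theorem, so $R_f$ is a finite convex combination of pointwise certificates and neither a measurable Gram selection nor closedness of the cone is needed.
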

\begin{proof}
For a fixed $\bx\in[-1,1]^n$, set
$a_f(\bx):=f(\bx)+\|G_-(\bx)\|_F^2-c_f\geq0$.
The orthogonality of $G_+(\bx)$ and $G_-(\bx)$ gives
\[
 \|G(\bx)-G(\bt)\|_F^2
 =\|G(\bt)-G_+(\bx)\|_F^2
   +2\langle G_-(\bx),G(\bt)\rangle+\|G_-(\bx)\|_F^2.
\]
Therefore
\begin{equation*}
 R_f(\bt)=\sigma_f(\bt)+\langle R(\bt),G(\bt)\rangle,
\end{equation*}
where
\begin{align*}
 \sigma_f(\bt)&=\int_{[-1,1]^n}K_{2r}(\bt,\bx)
          \big(\|G(\bt)-G_+(\bx)\|_F^2+a_f(\bx)\big)\,d\mu(\bx),\\
 R(\bt)&=2\int_{[-1,1]^n}K_{2r}(\bt,\bx)G_-(\bx)\,d\mu(\bx).
\end{align*}
For each fixed $\bx$, the first integrand is SOS in $\bt$, and the second is an SOS matrix in $\bt$. Indeed, in a fixed monomial basis, write $K_{2r}(\bt,\bx)=\bv_a(\bt)^\top Q(\bx)\bv_a(\bt)$ with $2a=\deg_{\bt}K_{2r}$ and $Q(\bx)\succeq0$. For any bounded nonnegative weight $b(\bx)$,
\[
 \int K_{2r}(\bt,\bx)b(\bx)\,d\mu(\bx)
 =\bv_a(\bt)^\top\left(\int b(\bx)Q(\bx)\,d\mu(\bx)\right)\bv_a(\bt).
\]
The integrated Gram matrix is positive semidefinite, so this polynomial is SOS and its degree in $\bt$ does not increase. The weight need not be polynomial: it only changes the coefficients in $\bt$. The same Gram-matrix argument applies to the displayed scalar and matrix integrands. Hence $\sigma_f$ is SOS and $R$ is an SOS matrix. The degree bounds required by~\eqref{eq: matrix module} are
\[
 \deg\sigma_f\leq\deg_{\bt}K_{2r}+2d\leq2r,\qquad
 \deg R\leq\deg_{\bt}K_{2r}\leq2r-2d\leq2(r-\lceil G\rceil).
\]
Thus $R_f\in\CQ(\cX)_{2r}$. In particular, the bound on $R$ ensures that the product $\langle R,G\rangle$ has degree at most $2r$.

The coefficient integrals are finite because $G_+$, $G_-$, and $a_f$ are continuous on the compact integration domain. The functional $\ell_{\biy}$ is applied only to the resulting polynomials in $\bt$.
\end{proof}

\begin{proposition}[Base estimate]\label{prop: matrix base estimate}
Let $r$ satisfy the conditions of Lemma~\ref{lem: matrix kernel}. For every $f\in\RR[\bx]_k$ and every full feasible pseudo-moment sequence $\biy$ for~\eqref{eq: matrix module},
\begin{equation}\label{eq: matrix base estimate}
 c_f-\ell_{\biy}(f)\leq2\varepsilon_r\cheb{f}+4S_G\varepsilon_r.
\end{equation}
\end{proposition}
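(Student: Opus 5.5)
The plan is to apply the feasible functional $\ell_{\biy}$ to the certificate $R_f$ of Lemma~\ref{lem: matrix kernel certificate}. Since $R_f\in\CQ(\cX)_{2r}$, this gives $\ell_{\biy}(R_f)\geq0$. It then remains to expand $R_f$ using the kernel error $E_p=\mathcal T_{2r}p-p$ from~\eqref{eq: matrix kernel error}. That rewrites $\ell_{\biy}(R_f)$ as $\ell_{\biy}(f)-c_f$ plus a sum of kernel-error terms, each bounded by Lemma~\ref{lem: matrix functional norm} together with~\eqref{eq: matrix kernel approximation}.

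\textbf{Expansion.} First I split the integrand. The term $f(\bx)-c_f$ contributes $\mathcal T_{2r}f-c_f\,\mathcal T_{2r}1=f-c_f+E_f-c_fE_1$. For the squared residual I write $\|G(\bx)-G(\bt)\|_F^2=\sum_{a,b}\big(g_{ab}(\bx)^2-2g_{ab}(\bx)g_{ab}(\bt)+g_{ab}(\bt)^2\big)$. Integrating against $K_{2r}(\bt,\bx)\,d\mu(\bx)$ gives
\[
\sum_{a,b}\Big(\mathcal T_{2r}(g_{ab}^2)-2g_{ab}\,\mathcal T_{2r}g_{ab}+g_{ab}^2\,\mathcal T_{2r}1\Big)
=\sum_{a,b}\Big(E_{g_{ab}^2}-2g_{ab}E_{g_{ab}}+g_{ab}^2E_1\Big),
\]
because the exact parts $g_{ab}^2-2g_{ab}^2+g_{ab}^2$ cancel. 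Therefore
\[
0\leq\ell_{\biy}(R_f)=\ell_{\biy}(f)-c_f+\ell_{\biy}(E_f)-c_f\ell_{\biy}(E_1)
+\sum_{a,b}\ell_{\biy}\big(E_{g_{ab}^2}-2g_{ab}E_{g_{ab}}+g_{ab}^2E_1\big).
\]

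\textbf{Estimation.} I will use three facts.
\begin{itemize}
\item Every polynomial here has degree at most $2r$. Each $E_p$ has degree at most $2r-2d$, since $\deg_{\bt}K_{2r}\leq 2r-2d$ and $\deg p\leq h\leq 2r-2d$. Multiplying by $g_{ab}$ or $g_{ab}^2$ adds at most $2d$. Hence Lemma~\ref{lem: matrix functional norm} applies to every term.
\item The Chebyshev $1$-norm is submultiplicative, $\cheb{pq}\leq\cheb p\cheb q$. This follows from $T_aT_b=\tfrac12(T_{a+b}+T_{|a-b|})$ in each coordinate.
\item The inputs $f$, $1$, $g_{ab}$, $g_{ab}^2$ all have degree at most $h$, so~\eqref{eq: matrix kernel approximation} applies to each.
\end{itemize}

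These give the following bounds.
\begin{itemize}
\item $|\ell_{\biy}(E_f)|\leq\varepsilon_r\cheb f$.
\item $|c_f\ell_{\biy}(E_1)|\leq\cheb f\,\varepsilon_r$, using~\eqref{eq: matrix cf spectral} and $\cheb 1=1$.
\item $|\ell_{\biy}(E_{g^2})|\leq\varepsilon_r\cheb{g}^2$.
\item $|\ell_{\biy}(2gE_g)|\leq2\varepsilon_r\cheb g^2$.
\item $|\ell_{\biy}(g^2E_1)|\leq\varepsilon_r\cheb g^2$.
\end{itemize}
Summing over $a,b$ gives $c_f-\ell_{\biy}(f)\leq2\varepsilon_r\cheb f+4S_G\varepsilon_r$, which is~\eqref{eq: matrix base estimate}.

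\textbf{Main obstacle.} The delicate step is the bookkeeping. The products $g_{ab}E_{g_{ab}}$ and $g_{ab}^2E_1$ must stay within degree $2r$, which is exactly why degree $2d$ was reserved in Lemma~\ref{lem: matrix kernel}. One must also justify that $\ell_{\biy}$ commutes with the $\bt$-polynomial identities. Since all integrals only produce polynomial coefficients in $\bt$, linearity suffices. Finally, submultiplicativity of $\cheb{\cdot}$ must be checked carefully for tensor Chebyshev products.
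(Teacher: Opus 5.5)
Your proposal is correct and follows the paper's proof essentially step for step: you apply $\ell_{\biy}$ to the certificate $R_f$ of Lemma~\ref{lem: matrix kernel certificate}, expand through the kernel errors $E_p$ using the cancellation $g^2-2g^2+g^2=0$, and bound each term with Lemma~\ref{lem: matrix functional norm}, \eqref{eq: matrix kernel approximation}, \eqref{eq: matrix cf spectral}, and the submultiplicativity $\cheb{pq}\leq\cheb p\cheb q$. The only cosmetic difference is that the paper first collects the $G$-terms into the SOS polynomial $V_G$ and bounds $\cheb{V_G}\leq4S_G\varepsilon_r$ before applying the functional once, whereas you apply the functional term by term.
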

\begin{proof}
Define the SOS polynomial
\[
 V_G(\bt):=\int_{[-1,1]^n}K_{2r}(\bt,\bx)
                         \|G(\bt)-G(\bx)\|_F^2\,d\mu(\bx).
\]
For a single entry $g=g_{ij}$, its contribution to $V_G$ is
\[
 g^2\mathcal T_{2r}1-2g\mathcal T_{2r} g+\mathcal T_{2r}(g^2)
 =g^2E_1-2gE_g+E_{g^2}.
\]
Here $E_g=\mathcal T_{2r} g-g$ and $E_{g^2}=\mathcal T_{2r}(g^2)-g^2$, as in~\eqref{eq: matrix kernel error}; in particular, $E_{g^2}$ is not the square of $E_g$. The terms $g^2-2g^2+g^2$ cancel. The term $E_1$ is retained because the polynomial normalization need not give $\mathcal T_{2r}1=1$ exactly.

Since $\deg g^2\leq2d\leq h$, the kernel estimate~\eqref{eq: matrix kernel approximation} and the product inequality~\eqref{eq:Cheb-product} give
\[
\begin{aligned}
 \cheb{E_1}&\leq\varepsilon_r,\\
 \cheb{E_g}&\leq\varepsilon_r\cheb{g},\\
 \cheb{E_{g^2}}&\leq\varepsilon_r\cheb{g}^{\,2}.
\end{aligned}
\]
Summing the contributions of the entries yields
\begin{equation}\label{eq: matrix variance bound}
 \cheb{V_G}\leq4S_G\varepsilon_r,
 \qquad 0\leq \ell_{\biy}(V_G)\leq4S_G\varepsilon_r.
\end{equation}
The lower bound follows from $V_G\in\Sigma_{2r}[\bt]$. For the upper bound, apply Lemma~\ref{lem: matrix functional norm}, using $\deg V_G\leq\deg_{\bt}K_{2r}+2d\leq2r$ from Lemma~\ref{lem: matrix kernel}.

Lemma~\ref{lem: matrix kernel certificate} now gives
\[
 0\leq \ell_{\biy}(R_f)=\ell_{\biy}(f)+\ell_{\biy}(E_f)+\ell_{\biy}(V_G)-c_f\big(1+\ell_{\biy}(E_1)\big).
\]
The same functional norm bound,~\eqref{eq: matrix cf spectral}, and~\eqref{eq: matrix variance bound} imply
\begin{align*}
 c_f-\ell_{\biy}(f)
 &\leq \ell_{\biy}(E_f)+\ell_{\biy}(V_G)-c_f \ell_{\biy}(E_1)\\
 &\leq\varepsilon_r\cheb{f}
       +4S_G\varepsilon_r+\varepsilon_r|c_f|\\
 &\leq2\varepsilon_r\cheb{f}+4S_G\varepsilon_r.
\end{align*}
The positivity statements use certificates in $\CQ(\cX)_{2r}$. For the norm estimates, all the conditions on degree are satisfied, i.e., $\deg E_f\leq\max\{\deg_{\bt}K_{2r},k\}\leq2r$, $\deg E_1\leq\deg_{\bt}K_{2r}\leq2r-2d$, and $\deg V_G\leq2r$.
\end{proof}

\subsection{Stage 2: Lift and projection}
We use the base estimate to control the optimal value of a moment projection with the fixed squared residual $H$. To define its Riesz functional precisely, regard $U$ as a symmetric matrix with independent entries indexed by
$\mathcal I:=\{(a,b):1\leq a\leq b\leq m\}$, and write
$U^\beta:=\prod_{(a,b)\in\mathcal I}U_{ab}^{\beta_{ab}}$.
A sequence $\biw=(w_{\alpha,\beta})_{|\alpha|+|\beta|\leq h}$ belongs to $\CM_h(\cD)$ if it is represented by a probability measure on $\cD$. For a polynomial
$P(\bx,U)=\sum_{\alpha,\beta}P_{\alpha,\beta}\bx^\alpha U^\beta$ of total degree at most $h$, define
\begin{equation*}
 \ell_{\biw}(P):=\sum_{|\alpha|+|\beta|\leq h}
                          P_{\alpha,\beta}w_{\alpha,\beta}.
\end{equation*}
In particular, if $g_{ab}(\bx)=\sum_\alpha g_{ab,\alpha}\bx^\alpha$, then
\begin{equation}\label{eq: matrix Riesz H}
 \ell_{\biw}(H)=\sum_{a,b=1}^m\left(
  \sum_{\alpha,\delta}g_{ab,\alpha}g_{ab,\delta}w_{\alpha+\delta,0}
 -2\sum_\alpha g_{ab,\alpha}w_{\alpha,e^{ab}}
 +w_{0,2e^{ab}}\right).
\end{equation}
Here $e^{ab}$ denotes the unit multi-index of the independent entry $U_{\min\{a,b\},\max\{a,b\}}$. Thus the off-diagonal entries are counted twice, as required by the Frobenius norm. Formula~\eqref{eq: matrix Riesz H} is well-defined because $\deg H\leq2d\leq h$. It is the expectation of $\|G(\bx)-U\|_F^2$ under a representing measure of $\biw$.

Write $\pi_k^x\biw=(w_{\alpha,0})_{|\alpha|\leq k}$. Given $\biy\in\pMkQX2r$, consider
\begin{equation}\label{eq: matrix moment projection}
 \min_{\substack{\biw\in\CM_h(\cD)\\\biz=\pi_k^x\biw}}
          \left\{\|\biy-\biz\|^2+\ell_{\biw}(H)\right\}.
\end{equation}
Since $\cD$ is compact, $\CM_h(\cD)$ is compact and convex, so an optimal pair $(\biz^*,\biw^*)$ exists.

\begin{proposition}\label{prop: matrix projection estimate}
Under the conditions of Proposition~\ref{prop: matrix base estimate}, every optimal pair in~\eqref{eq: matrix moment projection} satisfies
\begin{equation}\label{eq: matrix projection estimate}
 \|\biy-\biz^*\|^2+\ell_{\biw^*}(H)\leq E_r,
 \qquad E_r:=4c_k^2\varepsilon_r^2+4S_G\varepsilon_r.
\end{equation}
\end{proposition}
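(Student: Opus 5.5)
The plan is to repeat the first-order optimality argument of Theorem~\ref{thm: Hausdorff over H^n}. This time the convex objective in~\eqref{eq: matrix moment projection} carries the extra linear term $\ell_{\biw}(H)$. I will build an auxiliary objective $f$ whose minimum of $f+H$ over $\cD$ equals $c_f$, and then apply Proposition~\ref{prop: matrix base estimate} to it.

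\textbf{Step 1 (optimality condition).} Let $(\biz^*,\biw^*)$ be optimal and put $\bu:=\biz^*-\biy$. The map $\biw\mapsto\|\biy-\pi_k^x\biw\|^2+\ell_{\biw}(H)$ is convex on the convex set $\CM_h(\cD)$. Differentiating along the segment from $\biw^*$ to any $\biw\in\CM_h(\cD)$ gives
\[
 2\langle\bu,\pi_k^x\biw-\biz^*\rangle+\ell_{\biw}(H)-\ell_{\biw^*}(H)\geq0 .
\]
Define $f(\bx):=2\langle\bu,\bv_k(\bx)\rangle\in\RR[\bx]_k$, so that $\ell_{\biw}(f)=2\langle\bu,\pi_k^x\biw\rangle$. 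The inequality then says that $\biw^*$ minimizes the linear functional $\biw\mapsto\ell_{\biw}(f+H)$ over $\CM_h(\cD)$. A linear functional on $\CM_h(\cD)$ attains its minimum at a Dirac measure, so
\[
 \ell_{\biw^*}(f)+\ell_{\biw^*}(H)=\min_{(\bx,U)\in\cD}\{f(\bx)+H(\bx,U)\}=c_f .
\]

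\textbf{Step 2 (base estimate).} Subtracting $\ell_{\biy}(f)=2\langle\bu,\biy\rangle$ gives
\[
 c_f-\ell_{\biy}(f)=2\langle\bu,\biz^*-\biy\rangle+\ell_{\biw^*}(H)=2\|\bu\|^2+\ell_{\biw^*}(H).
\]
This applies because $\biy$ is the truncation of a full feasible pseudo-moment sequence, whose Riesz functional agrees with that of $\biy$ on $\RR[\bx]_k$. Proposition~\ref{prop: matrix base estimate} therefore bounds the left side by $2\varepsilon_r\cheb f+4S_G\varepsilon_r$.

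Both $\biy$ and $\biz^*$ have constant component $1$, so $u_0=0$. The definition~\eqref{eq: matrix constants} of $c_k$ then gives $\cheb f\leq2c_k\|\bu\|$, and hence
\[
 2\|\bu\|^2+\ell_{\biw^*}(H)\leq4c_k\varepsilon_r\|\bu\|+4S_G\varepsilon_r .
\]

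\textbf{Step 3 (absorb).} Subtract $\|\bu\|^2$ from both sides. The elementary inequality $4c_k\varepsilon_r a-a^2\leq4c_k^2\varepsilon_r^2$ for $a\geq0$ (AM--GM) then yields $\|\bu\|^2+\ell_{\biw^*}(H)\leq4c_k^2\varepsilon_r^2+4S_G\varepsilon_r=E_r$, which is~\eqref{eq: matrix projection estimate}.

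\textbf{Main obstacle.} The delicate step is Step 1. One must check that the variational inequality really gives $\ell_{\biw^*}(f+H)=c_f$ exactly, and not just an inequality. The key point is that $\CM_h(\cD)$ contains all Dirac moment vectors and is their convex hull, so the minimum of a linear functional over it equals the pointwise minimum over $\cD$.

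A secondary check is that the base estimate is applicable to $f$. This needs only $f\in\RR[\bx]_k$, with its Chebyshev norm controlled through $c_k$. Note that $\|\bu\|$ could be large a priori, but the quadratic absorption in Step 3 handles this.
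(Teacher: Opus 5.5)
Your proposal is correct and is essentially the paper's proof. With $e:=\|\biz^*-\biy\|$ and $f=2\langle\biz^*-\biy,\bv_k\rangle$, the steps match in order: the first-order condition makes $\biw^*$ a minimizer of $\biw\mapsto\ell_{\biw}(f+H)$, Dirac measures identify this minimum with $c_f$, and the base estimate with $\cheb{f}\leq 2c_k e$ together with the absorption $4c_k\varepsilon_r e\leq e^2+4c_k^2\varepsilon_r^2$ yields $E_r$.
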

\begin{proof}
Set $\delta:=\biz^*-\biy$, $e:=\|\delta\|$, and
$f_\delta(\bx):=2\langle\delta,\bv_k(\bx)\rangle$.
The first-order optimality condition for~\eqref{eq: matrix moment projection} is
\[
 2\langle\delta,\pi_k^x\biw-\biz^*\rangle
       +\ell_{\biw}(H)-\ell_{\biw^*}(H)\geq0
       \qquad\forall\biw\in\CM_h(\cD).
\]
Hence $\biw^*$ minimizes the linear functional $\ell_{\biw}(f_\delta+H)$. Its minimum over true moment sequences equals the minimum over $\cD$, because the value is at least that pointwise minimum and equality is attained by a Dirac measure at a minimizer. Therefore
\[
 c_{f_\delta}=2\langle\delta,\biz^*\rangle+\ell_{\biw^*}(H).
\]
Choose a full feasible extension $\biy^{2r}$ of $\biy$. Since $f_\delta$ has degree at most $k$, we have $\ell_{\biy^{2r}}(f_\delta)=\ell_{\biy}(f_\delta)=2\langle\delta,\biy\rangle$. Together with $\delta_0=0$, this gives
\[
 c_{f_\delta}-\ell_{\biy}(f_\delta)=2e^2+\ell_{\biw^*}(H),
 \qquad \cheb{f_\delta}\leq2c_ke.
\]
Applying Proposition~\ref{prop: matrix base estimate} to $\biy^{2r}$ gives
\[
 2e^2+\ell_{\biw^*}(H)\leq4c_k\varepsilon_r e+4S_G\varepsilon_r.
\]
Finally, $4c_k\varepsilon_r e\leq e^2+4c_k^2\varepsilon_r^2$ proves~\eqref{eq: matrix projection estimate}.
\end{proof}

Define the spectral violation function
\[
 \bp(\bx):=\max\{0,-\lambda_{\min}(G(\bx))\}.
\]
It is continuous and semialgebraic, and vanishes exactly on $\cX$ within $[-1,1]^n$. The {\L}ojasiewicz inequality gives constants $c_{\Lo}>0$ and $0<\Lo\leq1$ such that
\begin{equation*}
 \bd(\bx,\cX)\leq c_{\Lo}\bp(\bx)^{\Lo}
                   \qquad\forall\bx\in[-1,1]^n.
\end{equation*}
We retain the same convention for $\Lo$ as in the scalar sections.

\begin{theorem}[Hol-Scherer-type convergence rate]\label{thm:matrix}
Under Assumption~\ref{assume: matrix cube}, let $k\geq1$ be fixed and let $r$ satisfy the conditions in Lemma~\ref{lem: matrix kernel}. With $E_r$ defined in~\eqref{eq: matrix projection estimate},
\begin{equation}\label{eq: matrix Hausdorff}
 \bd_k(\CQ(\cX)_{2r})
 \leq\sqrt{E_r}+c_{\Lo}\Lip_k([-1,1]^n)E_r^{\Lo/2}
 =\mathrm O\left(\frac{(\log_2 r)^{3\Lo/2}}{r^{\Lo}}\right).
\end{equation}
For every $f\in\RR[\bx]_k$, the corresponding optimal-value bound is
\begin{equation}\label{eq: matrix objective rate}
 0\leq f_{\min}-\lb(f,\CQ(\cX))_r
 \leq\monc f\left(\sqrt{E_r}
                    +c_{\Lo}\Lip_k([-1,1]^n)E_r^{\Lo/2}\right).
\end{equation}
The constants and the threshold on $r$ may depend on $n,m,k,G$ and the error-bound constants. The exponent of $r$ has no further dependence on $n$ or $m$ beyond their possible influence on $\Lo$.
\end{theorem}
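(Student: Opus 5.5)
Fix $\biy\in\pMkQX2r$ and work with the optimal pair $(\biz^*,\biw^*)$ of the moment projection~\eqref{eq: matrix moment projection}. Proposition~\ref{prop: matrix projection estimate} gives two consequences: $\|\biy-\biz^*\|\leq\sqrt{E_r}$, and $\ell_{\biw^*}(H)\leq E_r$. The plan is to feed these into the common atom estimate~\eqref{eq: universal moment transfer} with $\CY=[-1,1]^n$, taking $e_r=\sqrt{E_r}$ and $\eta_r=E_r$.

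First, since $\biw^*\in\CM_h(\cD)$ and $\cD$ is compact, Tchakaloff's theorem gives an atomic representation
\[
 \biw^*=\sum_sw_s\bv_h(\bx_s,U_s),\qquad (\bx_s,U_s)\in\cD,\quad w_s\geq0,\quad \sum_sw_s=1.
\]
Then $\biz^*=\pi_k^x\biw^*=\sum_sw_s\bv_k(\bx_s)$ with $\bx_s\in[-1,1]^n$. Moreover, $\ell_{\biw^*}(H)=\sum_sw_s\|G(\bx_s)-U_s\|_F^2$, as noted after~\eqref{eq: matrix Riesz H}.

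Second, I bound the spectral violation by the residual. Since $U_s\succeq0$, the displayed decomposition before~\eqref{eq: matrix cf spectral} gives $\|G(\bx_s)-U_s\|_F^2\geq\|G_-(\bx_s)\|_F^2$. Also $\|G_-(\bx)\|_F^2\geq\lambda_{\min}(G_-)$-type control, namely $\|G_-(\bx)\|_F\geq\|G_-(\bx)\|_{\op}=\max\{0,-\lambda_{\min}(G(\bx))\}=\bp(\bx)$. Hence
\[
 \sum_sw_s\bp(\bx_s)^2\leq\ell_{\biw^*}(H)\leq E_r.
\]
Applying~\eqref{eq: atom transfer} with closest points $\widetilde{\bx}_s\in\cX$ yields $\widetilde{\biy}=\sum_sw_s\bv_k(\widetilde{\bx}_s)\in\CM_k(\cX)$ and
\[
 \bd(\biy,\CM_k(\cX))\leq\|\biy-\biz^*\|+\|\biz^*-\widetilde{\biy}\|\leq\sqrt{E_r}+c_{\Lo}\Lip_k([-1,1]^n)E_r^{\Lo/2}.
\]
Taking the supremum over $\biy$ gives~\eqref{eq: matrix Hausdorff}.

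For the rate, $E_r=4c_k^2\varepsilon_r^2+4S_G\varepsilon_r=\mathrm O(\varepsilon_r)$ with $\varepsilon_r=\mathrm O((\log_2r)^3/r^2)$. So $\sqrt{E_r}=\mathrm O((\log_2r)^{3/2}/r)$, and since $\Lo\leq1$ the dominant term is $E_r^{\Lo/2}=\mathrm O((\log_2r)^{3\Lo/2}/r^{\Lo})$. Lemma~\ref{lem: matrix kernel} applies only when $n\mid2(r-d)$ together with a lower bound on $r$. For general large $r$, I would use monotonicity of the pseudo-moment sets in $r$ (larger $r$ gives a smaller feasible set) and pass to the largest admissible order not exceeding $r$, which is still proportional to $r$. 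The objective bound~\eqref{eq: matrix objective rate} follows from Lemma~\ref{lemma: distance to convergence rate} together with strong duality, which was established via the interior point $1$ of the cone. Nonnegativity of the gap is immediate, since the relaxation is a lower bound.

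The step needing most care is the spectral inequality $\bp\leq\|G_-\|_F\leq\|G-U\|_F$ for $U\succeq0$. It holds because $\langle G_-,U\rangle\geq0$ and $\|G_-\|_{\op}=\max\{0,-\lambda_{\min}(G)\}$. Also, unlike the scalar case, $e_r$ here is only $\sqrt{E_r}$; this is harmless because $\Lo/2\leq1/2$.
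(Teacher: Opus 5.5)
Your proposal is correct and follows the paper's proof essentially step for step. It uses the atomic representation of $\biw^*$, the bound $\sum_s w_s\bp(\bx_s)^2\leq\ell_{\biw^*}(H)\leq E_r$, the common atom estimate with $e_r=\sqrt{E_r}$ and $\eta_r=E_r$, the monotonicity reduction to an admissible order, and Lemma~\ref{lemma: distance to convergence rate} with strong duality. The only cosmetic difference is how you get $\bp(\bx)\leq\|G(\bx)-U\|_F$: you use the spectral splitting together with $\|G_-(\bx)\|_F\geq\|G_-(\bx)\|_{\op}$, whereas the paper evaluates $U-G(\bx)$ at a unit eigenvector for $\lambda_{\min}(G(\bx))$. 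Both arguments are valid.
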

\begin{proof}
Fix $\biy\in\pMkQX2r$ and choose an optimal pair in~\eqref{eq: matrix moment projection}. An atomic representation of $\biw^*$ on $\cD$ gives
\[
 \biz^*=\sum_sw_s\bv_k(\bx_s),\qquad
 \ell_{\biw^*}(H)=\sum_sw_s\|G(\bx_s)-U_s\|_F^2,
 \qquad w_s\geq0,\quad\sum_sw_s=1.
\]
For any $U\succeq0$,
\[
 \bp(\bx)\leq\|G(\bx)-U\|_{\op}\leq\|G(\bx)-U\|_F.
\]
Indeed, if the smallest eigenvalue of $G(\bx)$ is negative, evaluate $U-G(\bx)$ at a corresponding unit eigenvector and use $U\succeq0$; otherwise the inequality is immediate. Proposition~\ref{prop: matrix projection estimate} therefore yields the two estimates needed for the common atom argument:
\[
 \|\biy-\biz^*\|\leq\sqrt{E_r},\qquad
 \sum_sw_s\bp(\bx_s)^2\leq\ell_{\biw^*}(H)\leq E_r.
\]
Apply~\eqref{eq: universal moment transfer} on $\CY=[-1,1]^n$ with $e_r=\sqrt{E_r}$ and $\eta_r=E_r$, and take the supremum over $\biy$. Since $\CM_k(\cX)\subseteq\pMkQX2r$, this gives~\eqref{eq: matrix Hausdorff}. For fixed $d$, we have
\[
 \frac{(\log_2(r-d))^3}{(r-d)^2}
 \sim\frac{(\log_2 r)^3}{r^2}\qquad(r\to\infty).
\]
The rate follows from~\eqref{eq: matrix kernel approximation} and $0<\Lo\leq1$. It extends to all sufficiently large orders by monotonicity, using $r'=d+n\lfloor(r-d)/n\rfloor\leq r$, for which $n\mid2(r'-d)$ and $r-r'<n$. Finally, Lemma~\ref{lemma: distance to convergence rate} and the strong duality discussed after~\eqref{eq: matrix module} imply~\eqref{eq: matrix objective rate}.
\end{proof}

\section{Conclusion}
In this paper, we have extended the method developed in~\cite{tran2025convergence} from the Schm\"udgen-type hierarchy to the Putinar-type, Krivine--Stengle-type, Extended-Handelman-type, and Hol-Scherer-type hierarchies. The method studies the Hausdorff distance between the sets of truncated pseudo-moment sequences and moment sequences. It consists of two stages: base estimation on a simple set, followed by lift and projection onto the original feasible set. Equality constraints are included in the lifted description, and the same weighted estimate on atoms completes the argument in each case.

For scalar polynomial constraints, we obtain the rates $\mathrm{O}((\log_2 r)^{3\Lo/2}/r^{\Lo})$ for the Putinar-type hierarchy and $\mathrm{O}(r^{-\Lo/2})$ for the normalized Krivine--Stengle-type and Extended-Handelman-type hierarchies. For the Hol-Scherer-type hierarchy, a finite-degree kernel certificate for the squared Frobenius residual gives $\mathrm{O}((\log_2 r)^{3\Lo/2}/r^{\Lo})$ as well. The penalty coefficient is fixed at one. In each case, $\Lo$ is the exponent in the error bound $\bd(\bx,\cX)\leq c_{\Lo}\bp(\bx)^{\Lo}$, and the coordinate constraints are included explicitly: those for $[0,1]^n$ in the scalar cases and for $[-1,1]^n$ in the matrix case. The estimates are uniform for a fixed moment degree and imply convergence bounds for all polynomial objectives of that degree.

Together with our preceding work, these results provide a common method for characterizing convergence rates across different types of moment-SOS hierarchies. The final projection uses the probability weights of a representing measure, so its estimate does not depend on the number of atoms. In the matrix case, the kernel is applied to fixed-degree polynomials; no coefficient estimate for an increasing-degree composition with $G$ is required. Further improvements may come from sharper base estimates, smaller degree overheads in the lifting certificates, and additional structure in the constraints. It would also be interesting to adapt the construction to sparse descriptions.

\appendix
\section{Coefficient norms and Chebyshev norms}\label{app:norms}
This appendix proves the norm inequalities used in the scalar residual estimates, the Putinar-type bound, and the kernel argument for the Hol-Scherer-type hierarchy. The notation $\mon{\cdot}$ and $\monc{\cdot}$ refers to norms of monomial coefficient vectors, whereas $\cheb{\cdot}$ refers to the Chebyshev coefficient norm.

\subsection{The monomial basis and products}
\begin{proposition}\label{prop:mon-norms}
For $p\in\RR[\bx]_d$,
\begin{equation}\label{eq:mon-norms}
 \monc p\leq\mon p\leq\sqrt{s(n,d)}\monc p.
\end{equation}
If at most $M$ coefficients are nonzero, $s(n,d)$ may be replaced by $M$. In particular, if the constant coefficient vanishes, it may be replaced by $s(n,d)-1$. For any polynomials $p,q$,
\begin{equation*}
 \mon{pq}\leq\mon p\mon q,\qquad
 \monc{p^2}\leq\mon p^2.
\end{equation*}
\end{proposition}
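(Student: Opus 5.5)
The first chain $\monc p\leq\mon p\leq\sqrt{s(n,d)}\monc p$ is a statement about the coefficient vector $\vc(p)\in\RR^{s(n,d)}$. I will treat it as the comparison of the $\ell_2$ and $\ell_1$ norms of that vector.

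For the left inequality, expand $\mon p^2=(\sum_\alpha|p_\alpha|)^2$. It equals $\sum_\alpha p_\alpha^2$ plus nonnegative cross terms $2|p_\alpha||p_\beta|$, so $\monc p^2\leq\mon p^2$. For the right inequality, apply Cauchy--Schwarz to the sum $\sum_\alpha|p_\alpha|\cdot\mathbf 1_{\{p_\alpha\neq0\}}$. This gives $\mon p\leq\sqrt{M}\,\monc p$, where $M$ is the number of nonzero coefficients. Since $M\leq s(n,d)$, the general bound follows. If the constant coefficient vanishes, then $M\leq s(n,d)-1$, which gives the refined version used in Theorems~\ref{thm: Hausdorff over H^n} and~\ref{thm: bound on Hausdorff Handelman over hypercube}.

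For the product bound, write $pq=\sum_\gamma\big(\sum_{\alpha+\beta=\gamma}p_\alpha q_\beta\big)\bx^\gamma$. The triangle inequality applied coefficientwise gives
\[
 \mon{pq}\leq\sum_\gamma\sum_{\alpha+\beta=\gamma}|p_\alpha||q_\beta|=\sum_{\alpha,\beta}|p_\alpha||q_\beta|=\mon p\mon q.
\]
Finally, $\monc{p^2}\leq\mon{p^2}\leq\mon p^2$, combining the left inequality of the first chain (applied to $p^2$) with the product bound for $q=p$.

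No step is a real obstacle. The only point needing care is the indexing: the polynomial $p$ must be viewed inside $\RR[\bx]_d$, so that the ambient dimension is $s(n,d)$. Also, the reduction to $M$ nonzero coefficients uses the support of $\vc(p)$ rather than the full index set.
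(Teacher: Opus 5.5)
Your proof is correct and follows essentially the same route as the paper: $\ell_2\leq\ell_1$ by expanding the square, Cauchy--Schwarz over the support of the coefficient vector for the $\sqrt{M}$ bound, the coefficientwise triangle inequality for $\mon{pq}\leq\mon p\mon q$, and chaining these to get $\monc{p^2}\leq\mon p^2$. One side remark is slightly off, without affecting the proof: the $s(n,d)-1$ refinement is used in Theorem~\ref{thm: Hausdorff over H^n} and for $c_k$ in Section~\ref{sec:matrix}, whereas Theorem~\ref{thm: bound on Hausdorff Handelman over hypercube} uses only $L(f)\leq\|\widetilde{\biy}-\overline{\biy}\|$.
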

\begin{proof}
Since $\sum_\alpha p_\alpha^2\leq(\sum_\alpha|p_\alpha|)^2$, the first inequality in~\eqref{eq:mon-norms} holds. If $p$ has $M$ nonzero coefficients, Cauchy--Schwarz gives
\[
 \sum_{\alpha:p_\alpha\ne0}|p_\alpha|
 \leq\sqrt M\left(\sum_\alpha p_\alpha^2\right)^{1/2}.
\]
For products, $(pq)_\gamma=\sum_{\alpha+\beta=\gamma}p_\alpha q_\beta$, so
\[
 \mon{pq}\leq\sum_\gamma\sum_{\alpha+\beta=\gamma}|p_\alpha||q_\beta|
 =\mon p\mon q.
\]
Finally, $\monc{p^2}\leq\mon{p^2}\leq\mon p^2$.
\end{proof}
The square estimate applies to $h_i$ and $g_j-u_j$ in~\eqref{eq: residual constant}. Since $u_j$ is an additional variable, $\mon{g_j-u_j}=\mon{g_j}+1$.

\subsection{Chebyshev coefficients on \texorpdfstring{$[-1,1]^n$ and $\mathrm{H}^n$}{[-1,1]n and Hn}}
Write
\[
 p(\bx)=\sum_\alpha\widehat p_\alpha T_\alpha(\bx)
        =\sum_\alpha\widetilde p_\alpha T_\alpha(2\bx-\mathbf1).
\]
By definition, $\cheb p=\sum|\widehat p_\alpha|$ and
$\cheb{p\circ\mathbf A}=\sum|\widetilde p_\alpha|$. Both expansions preserve total degree.

\begin{proposition}\label{prop:Cheb-norms}
For $p\in\RR[\bx]_d$,
\begin{align}
 \supnorm p{[-1,1]^n}&\leq\cheb p\leq\mon p\leq\sqrt{s(n,d)}\monc p,\label{eq:Cheb-unshifted}\\
 \cheb{p\circ\mathbf A}&\leq\mon p\leq\sqrt{s(n,d)}\monc p.\nonumber
\end{align}
In both comparisons, the last factor is $\sqrt{s(n,d)-1}$ if the constant coefficient of $p$ vanishes.
Moreover,
\begin{equation}\label{eq:Cheb-product}
 \cheb{pq}\leq\cheb p\cheb q.
\end{equation}
\end{proposition}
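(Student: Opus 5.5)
The proof treats the three groups of inequalities separately; each reduces to elementary facts about univariate Chebyshev polynomials, extended to tensor products.

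\textbf{First chain on $[-1,1]^n$.} The sup-norm bound follows from $|T_\alpha(\bx)|=\prod_i|T_{\alpha_i}(x_i)|\leq1$ on $[-1,1]^n$ and the triangle inequality applied to $p=\sum_\alpha\widehat p_\alpha T_\alpha$. For $\cheb p\leq\mon p$, the triangle inequality for $\cheb{\cdot}$ gives $\cheb p\leq\sum_\alpha|p_\alpha|\,\cheb{\bx^\alpha}$, so it suffices to show $\cheb{\bx^\alpha}=1$. In one variable, the classical expansion
\[
 x^j=2^{1-j}\sum_{0\leq i\leq j/2}{}^{\prime}\binom{j}{i}T_{j-2i}(x)
\]
has nonnegative coefficients. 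Here the prime means the term with $j-2i=0$ is halved. Evaluating at $x=1$, where every $T_\ell(1)=1$, shows that these coefficients sum to $1$. For a monomial $\bx^\alpha=\prod_i x_i^{\alpha_i}$, the tensor-product expansion has coefficients equal to products of the univariate ones. These are again nonnegative and sum to $\prod_i 1=1$. The final step, $\mon p\leq\sqrt{s(n,d)}\monc p$, together with its refinement to $s(n,d)-1$ when the constant term vanishes, is Proposition~\ref{prop:mon-norms}.

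\textbf{Shifted chain on $\mathrm H^n$.} The same reduction applies, because $\cheb{(\cdot)\circ\mathbf A}$ is a norm and it suffices to show $\cheb{\bx^\alpha\circ\mathbf A}=1$. We have $\bx^\alpha\circ\mathbf A=\prod_i((x_i+1)/2)^{\alpha_i}$. In one variable, $((x+1)/2)^j=\cos^{2j}(\theta/2)$ with $x=\cos\theta$. Expanding $\cos^{2j}(\theta/2)=4^{-j}(e^{i\theta/2}+e^{-i\theta/2})^{2j}$ gives a combination of $\cos(\ell\theta)=T_\ell(x)$, $0\leq\ell\leq j$, with nonnegative binomial coefficients. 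Their sum is again the value at $x=1$, which is $1$. Tensorising gives the claim. Since the map $p\mapsto p\circ\mathbf A$ preserves degree, the remaining inequalities are again Proposition~\ref{prop:mon-norms}.

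\textbf{Product inequality.} By bilinearity and the triangle inequality, it suffices to show $\cheb{T_\alpha T_\beta}\leq1$. The product factorises coordinatewise. In each coordinate, the identity $T_aT_b=\tfrac12(T_{a+b}+T_{|a-b|})$ has Chebyshev $1$-norm at most $1$; it equals exactly $1$ since the two terms are nonnegative, even when $a=b$, where the halves combine. A product over coordinates of expansions with nonnegative coefficients summing to $1$ is again such an expansion, so $\cheb{T_\alpha T_\beta}=1$.

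The main obstacle is the identity $\cheb{\bx^\alpha}=1$ and its shifted analogue. The key observation is that all coefficients are nonnegative, so the $1$-norm equals the evaluation at $\mathbf 1$. Everything else follows from the triangle inequality and Proposition~\ref{prop:mon-norms}.
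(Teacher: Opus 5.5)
Your proof is correct and follows the paper's argument. Both use nonnegative Chebyshev expansions of (shifted) monomials, whose coefficients sum to the value at $\mathbf 1$, followed by tensorisation and the triangle inequality; the product bound comes from $T_aT_b=\tfrac12(T_{a+b}+T_{|a-b|})$, and Proposition~\ref{prop:mon-norms} gives the last factor. The only variation is in the shifted case: you expand $((x+1)/2)^j=\cos^{2j}(\theta/2)$ directly via the half-angle identity, whereas the paper first expands $((1+x)/2)^a$ binomially in monomials and then applies the unshifted monomial-to-Chebyshev expansion; both routes give nonnegative coefficients summing to $1$.
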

\begin{proof}
The uniform estimate follows from $|T_j(t)|\leq1$ for $t\in[-1,1]$. To prove the coefficient comparison, note that
\[
 (\cos t)^a=2^{-a}\sum_{j=0}^a\binom aj\cos((a-2j)t).
\]
Combining equal frequencies expresses $x^a$ as a nonnegative linear combination of Chebyshev polynomials. The coefficients sum to $1$, as is seen by evaluation at $x=1$. Taking products over the coordinates gives $\cheb{\bx^\alpha}=1$, and hence $\cheb p\leq\sum_\alpha|p_\alpha|=\mon p$.

For the shifted basis, the polynomial $((1+x)/2)^a$ has nonnegative monomial coefficients summing to $1$. Apply the preceding expansion to each monomial. Its Chebyshev coefficients are again nonnegative and sum to $1$. Taking products over the coordinates proves $\cheb{\bx^\alpha\circ\mathbf A}=1$. Hence the triangle inequality gives $\cheb{p\circ\mathbf A}\leq\sum_\alpha|p_\alpha|=\mon p$. Proposition~\ref{prop:mon-norms} gives the last inequality and the refinement for a vanishing constant coefficient.

Finally, $T_aT_b=(T_{a+b}+T_{|a-b|})/2$. In several variables, each product $T_\alpha T_\beta$ is a nonnegative linear combination of basis elements whose coefficients sum to $1$. Expand $pq$, apply the triangle inequality, and sum.
\end{proof}

\subsection{Matrix entry norms}\label{app: composition}
For an $m\times m$ polynomial matrix $F$, define
\begin{equation*}
 \|F\|_{1,\mathrm{Cheb},\max}:=\max_{1\leq a,b\leq m}\cheb{F_{ab}}.
\end{equation*}
For the matrix polynomial in Section~\ref{sec:matrix}, set $\gamma=\|G\|_{1,\mathrm{Cheb},\max}$. The bounds used in~\eqref{eq: matrix constants} are
\[
 \sup_{\bx\in[-1,1]^n}\|G(\bx)\|_{\op}\leq m\gamma,
 \qquad S_G:=\sum_{a,b=1}^m\cheb{G_{ab}}^{\,2}\leq m^2\gamma^2.
\]
Indeed,~\eqref{eq:Cheb-unshifted} gives $|G_{ab}(\bx)|\leq\gamma$ on $[-1,1]^n$, and therefore
\[
 \|G(\bx)\|_{\op}\leq\|G(\bx)\|_F
 =\left(\sum_{a,b=1}^m|G_{ab}(\bx)|^2\right)^{1/2}\leq m\gamma.
\]
The estimate for $S_G$ follows by bounding each of its $m^2$ terms by $\gamma^2$.

Multiplication of $G$ by any fixed positive constant preserves its feasible set and truncated matrix quadratic module: a matrix SOS multiplier is rescaled by the reciprocal constant. Thus $\gamma$ may be made smaller than any prescribed positive number. The spectral residual scales by the same factor, so the corresponding {\L}ojasiewicz constant must be rescaled as well. Section~\ref{sec:matrix} does not require such a normalization; its coefficient estimates involve only the entries of $G$ and their squares.

\section*{Conflict Of Interest statement}
The authors have no competing interests to declare that are relevant to the content of this article.
\bibliographystyle{alpha}
\bibliography{main}
\end{document}